\documentclass[10pt]{amsart}

\usepackage{amsmath,enumitem}
\usepackage{amssymb}
\usepackage{amsthm}
\usepackage{array}
\usepackage[all]{xy}
\usepackage{color,xcolor}
\usepackage{fancyhdr}
\usepackage{euscript}
\usepackage{graphics,graphicx}
\usepackage{cancel}
\usepackage{fancybox}
\usepackage{verbatim}
\usepackage[margin=2cm]{geometry}

\usepackage[normalem]{ulem}

\usepackage{tikz,tikz-cd}
\usetikzlibrary{arrows,matrix}

\usepackage[colorlinks=false,pagebackref,hyperindex]{hyperref}



\DeclareFontFamily{OT1}{rsfs}{}
\DeclareFontShape{OT1}{rsfs}{n}{it}{<-> rsfs10}{}
\DeclareMathAlphabet{\mathscr}{OT1}{rsfs}{n}{it}

\newcommand{\degsup}{\operatorname{degsupp}}
\newcommand{\nilsup}{\operatorname{nilsupp}}
\newcommand{\mcl}{\mathcal{L}}

\newcommand{\PP}{\mathbf P}

\newcommand{\ZZ}{\mathbf Z}
\newcommand{\RR}{\mathbf R}
\newcommand{\QQ}{\mathbf Q}

\newcommand{\NN}{\mathbf N}

\newcommand{\id}{\operatorname{id}}


\newcommand{\Ann}{\operatorname {Ann}}


\def\Spec{\operatorname	{Spec}}

\newtheoremstyle%
{custom}%
{}
{}
{}
{}
{}
{.}
{ }
{\thmname{}
\thmnumber{}%
\thmnote{\bfseries #3}}%

\newtheoremstyle%
{Theorem}%
{}%
{}%
{\itshape}%
{}%
{}%
{.}%
{ }%
{\thmname{\bfseries #1}%
\thmnumber{\;\bfseries #2}%
\thmnote{\;(\bfseries #3)}}%

\theoremstyle{definition}
\newtheorem{thm}{Theorem}[section]
\newtheorem{cor}[thm]{Corollary}
\newtheorem{lem}[thm]{Lemma}
\theoremstyle{definition}
\newtheorem{dff}[thm]{Definition}
\newtheorem{xmp}[thm]{Example}
\newtheorem{rmk}[thm]{Remark}

\newtheorem*{run}{Running Assumptions}

\newtheorem{mainthm}{Theorem}

\def\fa{\mathfrak{a}}
\def\fb{\mathfrak{b}}
\def\fm{\mathfrak{m}}
\newcommand{\mfm}{\mathfrak{m}}
\def\fn{\mathfrak{n}}
\def\fp{\mathfrak{p}}
\def\fq{\mathfrak{q}}

\newcommand{\fM}{\mathfrak{M}}

\newcommand{\Arg}{\rule{1ex}{1pt}}

\newcommand{\hsl}{\operatorname{HSL}}
\newcommand{\fte}{\operatorname{Fte}}
\newcommand{\depth}{\operatorname{depth}}
\newcommand{\fdp}{\operatorname{F-depth}}
\newcommand{\gfdp}{\operatorname{gF-depth}}
\newcommand{\supp}{\operatorname{Supp}}
\newcommand{\fexp}{\operatorname{F-exp}}
\newcommand{\fbp}[1]{\left[p^{#1}\right]}
\newcommand{\cP}{\mathcal{P}}

\newcommand{\im}{\operatorname{im}}

\newcommand{\ux}{\underline{x}}

\newcommand{\ui}{\underline{i}}

\colorlet{DG}{green!50!black}
\colorlet{DB}{blue!50!black}

\def\LEM[#1]{\footnote{ {\color{DG} #1  }  }}
\def\KM[#1]{\footnote{ {\color{DB} #1  }  }}

\title{Generalized $F$-depth and graded nilpotent singularities} 
\author{Kyle Maddox and Lance Edward Miller}
\subjclass[2020]{13A35, 13D45, 14B15}

\begin{document}

\begin{abstract}
We address explicit constructions of new variants of $F$-nilpotent singularities. In particular, we explore how (generalized) weakly $F$-nilpotent singularities behave under gluing, Segre products, Veronese subrings, and the formation of diagonal hypersurface algebras. From these results, explicit examples are produced and we provide bounds on their Frobenius test exponents. To accomplish these tasks, we introduce the {\it generalized $F$-depth} in analogy to Lyubeznik's $F$-depth. These depth-like invariants track (generalized) weakly $F$-nilpotent singularities in a similar fashion as (generalized) depth tracks (generalized) Cohen-Macaulay singularities. 
\end{abstract}

\maketitle


\section{Introduction}

The class of $F$-nilpotent singularities, and its variants, have gathered significant recent interest \cite{ST15, HQ18, Mad19, Quy19, PQ19, HQ21, KMPS}. Introduced by Srinivas-Takagi as well as implicitly by Blickle and Lyubzenik \cite{Bli01}, these singularities ask for the natural Frobenius actions on local cohomology of a local ring to be as nilpotent as possible. In the case of isolated singularities, they are closely tied to delicate information about Hodge filtrations of lifts to characteristic $0$, see \cite[Conj. $H_n$]{ST15}. In purely algebraic terms, $F$-nilpotent singularities insist that lower local cohomology directly have nilpotent Frobenius. This condition is impossible for top local cohomology and instead one asks that the Frobenius is nilpotent on the largest possible submodule, i.e., the tight closure of zero. These stand in some sense orthogonal to the class of $F$-injective singularities, where the Frobenius is asked to be injective on all local cohomology, and it is easily checked that rings which have $F$-rational singularities are equivalently both $F$-injective and $F$-nilpotent. 

\

Variants of $F$-nilpotent singularities have also been recently considered, namely {\bf weakly $F$-nilpotent singularities} which only require the nilpotent condition on the lower local cohomology modules. We consider weakly $F$-nilpotent singularities as analogs of Cohen-Macaulay singularities by treating nilpotent Frobenius actions as analogous to vanishing. Indeed, it is trivial to see that rings which are both $F$-injective and weakly $F$-nilpotent are Cohen-Macaulay. However, the added difficulty of working with nilpotence in place of vanishing introduces several technicalities, which we spell out through the course of this article. Analogous to how generalized Cohen-Macaulay singularities replace vanishing of lower local cohomology with a finite length condition, the first named author introduced the class of {\bf generalized weakly $F$-nilpotent} singularities \cite{Mad19} and have been shown to enjoy similar properties to both weakly $F$-nilpotent and generalized Cohen-Macaulay singularities.

\

Lyubeznik introduced a depth like invariant, {\bf $F$-depth}, which measures the smallest cohomological degree of non-nilpotence and for a local ring $R$. As Cohen-Macaulay rings are those with maximal depth, weakly $F$-nilpotent rings are those with maximal $F$-depth. In this article, we introduce and study an analogous depth-like invariant, the {\bf generalized $F$-depth}, which plays the same role for generalized weakly $F$-nilpotent singularities. The importance of these invariants is that theorems about (generalized) weakly $F$-nilpotent singularities are enhanced to theorems about calculating lower bound for (generalized) $F$-depth. Such lower bounds carry interesting geometric information, for example \cite[Cor. 4.6]{Lyu06} shows for completely local ring of dimension at least two with separably closed residue field, the punctured spectrum is formally geometrically connected if and only if its $F$-depth is at least two also.  

\

The study of nilpotent singularity types so far has suffered from a lack of explicit examples. To correct this critical deficit, we demonstrate that nilpotence properties persist along various important and natural constructions, specifically gluing, Segre products, and the formation of Veronese subrings. To aid in this, we focus on adapting our results to the appropriate notions for standard graded rings over a field. For technical simplicity of the introduction, we state theorems only in terms of (generalized) weakly $F$-nilpotent singularities, but stress that all theorems in the paper are in stronger forms explicating the behavior of (generalized) $F$-depth along such constructions.

\

We start with the gluing construction. Geometrically, this asks when a variety $X$ is a proper union $X = Y_1 \cup Y_2$ with each $Y_1, Y_2$, and $Y_1 \cap Y_2$ all having a prescribed singularity type, must $X$ also? If this is true, it is said that that singularity type {\it glues}. The class of $F$-injective singularities glues for local rings by \cite{Sch09} and many other related classes have been shown to glue. For local rings $(R,\fm)$, this question is phrased algebraically as asking for two ideals $\fa_1, \fa_2 \subset R$ are ideals with $\fa_1 \cap \fa_2 = 0$, if $R/\fa_1,R/\fa_2$, and $R/(\fa_1+\fa_2)$ all satisfy a singularity condition, does $R$? We address this question for (generalized) weakly $F$-nilpotent singularities as follows.

\begin{mainthm}(Corollary \ref{thm:GlueWeakFNil}) Suppose $(R,\fm)$ is a local ring of dimension $d \geq 2$ with ideals $\fa_1, \fa_2 \subset R$ such that $\fa_1 \cap \fa_2 = 0$. Assume $\dim R/\fa_1 = \dim R/\fa_2 = d$ and that $\dim R/(\fa_1 + \fa_2)\ge d-1$. If $R/\fa_1,R/\fa_2$, and $R/\fa_1 + \fa_2$ are weakly $F$-nilpotent, so is $R$. Further, if $R$ is equidimensional and $R/\fa_1$, $R/\fa_2$, and $R/\fb$ are generalized weakly $F$-nilpotent, so is $R$.
\end{mainthm}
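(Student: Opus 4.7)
The plan is to use the Mayer--Vietoris short exact sequence coming from $\mathfrak{a}_1 \cap \mathfrak{a}_2 = 0$, namely
\[
0 \longrightarrow R \longrightarrow (R/\mathfrak{a}_1) \oplus (R/\mathfrak{a}_2) \longrightarrow R/(\mathfrak{a}_1+\mathfrak{a}_2) \longrightarrow 0,
\]
which is a sequence of $R$-modules compatible with the Frobenius endomorphism. Applying local cohomology at $\mathfrak{m}$ yields, in each degree $i$, the Frobenius-equivariant exact sequence
\[
H^{i-1}_{\mathfrak{m}}\!\bigl(R/(\mathfrak{a}_1+\mathfrak{a}_2)\bigr) \longrightarrow H^{i}_{\mathfrak{m}}(R) \longrightarrow H^{i}_{\mathfrak{m}}(R/\mathfrak{a}_1) \oplus H^{i}_{\mathfrak{m}}(R/\mathfrak{a}_2) \longrightarrow H^{i}_{\mathfrak{m}}\!\bigl(R/(\mathfrak{a}_1+\mathfrak{a}_2)\bigr),
\]
and all of the analysis takes place inside this diagram.

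For the weakly $F$-nilpotent conclusion, fix $i < d$. Since each $R/\mathfrak{a}_j$ has dimension $d$, the hypothesis forces Frobenius to be nilpotent on $H^{i}_{\mathfrak{m}}(R/\mathfrak{a}_j)$, hence on the direct sum on the right. Because $\dim R/(\mathfrak{a}_1+\mathfrak{a}_2) \geq d-1$, one has $i-1 \leq d-2 < \dim R/(\mathfrak{a}_1+\mathfrak{a}_2)$, so weakly $F$-nilpotence of $R/(\mathfrak{a}_1+\mathfrak{a}_2)$ also gives Frobenius nilpotence on the left-hand term. Pick a single exponent $e$ large enough to kill Frobenius on all three flanking modules. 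A short diagram chase then shows $F^{2e}$ annihilates $H^{i}_{\mathfrak{m}}(R)$: for $x \in H^{i}_{\mathfrak{m}}(R)$, the element $F^{e}(x)$ dies in the direct sum, hence lifts to an element of $H^{i-1}_{\mathfrak{m}}(R/(\mathfrak{a}_1+\mathfrak{a}_2))$ via the connecting map, and a further $F^{e}$ kills that lift and therefore kills $F^{2e}(x)$. This gives the weakly $F$-nilpotent statement for $R$.

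For the generalized case I would rerun the same argument with the predicate ``nilpotent Frobenius'' replaced by ``Frobenius becomes nilpotent after quotienting by a finite-length submodule'', i.e.\ some power of $F$ has finite-length image on each relevant local cohomology module. Using that finite length is preserved under subquotients and extensions, the same diagram chase propagates this property from the three flanking terms to $H^{i}_{\mathfrak{m}}(R)$; the equidimensionality of $R$ is what keeps the degree range for $R$ aligned with those of the quotients and lets the hypothesis transfer cleanly. The main obstacle I expect is the bookkeeping in this generalized setting: one must control images and preimages of finite-length submodules through the connecting homomorphism and select a Frobenius exponent that works uniformly for all $i < d$. The dimension hypothesis $\dim R/(\mathfrak{a}_1+\mathfrak{a}_2) \geq d-1$ is tight at the boundary $i=d-1$, where it is precisely what places $H^{i-1}_{\mathfrak{m}}(R/(\mathfrak{a}_1+\mathfrak{a}_2))$ inside the controlled range; any weakening would leave this flanking term without a nilpotence or finite-length guarantee and break the chase.
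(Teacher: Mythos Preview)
Your approach is essentially the paper's: both use the Mayer--Vietoris sequence
\[
0 \longrightarrow R \longrightarrow R/\fa_1 \oplus R/\fa_2 \longrightarrow R/\fb \longrightarrow 0
\]
viewed as a sequence of $R[F]$-modules, then track (generalized) nilpotence through the induced long exact sequence in local cohomology. The only difference is packaging: the paper first proves a general ``$F$-depth lemma'' for short exact sequences of $R[F]$-modules (their Theorem~3.5, mirroring the classical depth inequalities) together with a lemma that (generalized) nilpotence is closed under extensions (their Lemma~2.8), and then simply reads off $\fdp R \ge \min\{\fdp R/\fa_1,\fdp R/\fa_2,\fdp R/\fb+1\}$; your direct chase is the unpacked content of those two lemmas.

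Two small corrections on the generalized case. First, your working reformulation ``some power of $F$ has finite-length image'' is not literally the paper's definition (which is that $M/0^\rho_M$ has finite length), and since $F^e$ is only $p^e$-linear its image is not an $R$-submodule, so that phrasing needs care; the paper's extension-closure argument instead shows $\fm^{2n}B$ is nilpotent once $\fm^n A$ and $\fm^n C$ are, which is the cleaner bookkeeping device. Second, equidimensionality of $R$ is not used to ``align degree ranges'': the inequality $\gfdp R \ge d$ already follows from the sequence exactly as in your chase, and equidimensionality enters only through the paper's Lemma~3.2(c) to cap $\gfdp R$ at $\dim R$.
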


The remainder of the article concerns standard graded rings over a field, where we explore how the canonical Frobenius action interacts with the graded structure of the graded local cohomology modules. Of crucial importance is whether the degree $0$ part of the local cohomology modules are nilpotent, and indeed we define an invariant, similar to the classical $a$-invariant of a graded ring, which tracks for us the largest degree where the local cohomology is not nilpotent -- i.e., $b_j(R) = \sup\{ n \in \ZZ \colon H_\fm^j(R) \textrm{ is not nilpotent in degree } n\}$, and we also set $b(R)=\min b_j(R)$. With these tools in hand, we explore nilpotence properties of two of the most common constructions among graded rings; Segre products and Veronese subrings. We show that the $b_j$-invariants defined above play a similar role in controlling the nilpotence properties for these constructions as the $a$-invariant does for controlling the Cohen-Macaulay property.

\begin{mainthm}(Theorems \ref{thm:fdepthSegre} and Corollary \ref{cor:wFnSegre}) Suppose $R$ and $S$ are standard graded rings over the same field and set $T = R \# S$ the Segre product. If $R$ and $S$ are weakly $F$-nilpotent, then $T$ is weakly $F$-nilpotent if and only if $b(R)=b(S)=\infty$, in which case $b(T)=\infty$ as well. If $R$ and $S$ are generalized weakly $F$-nilpotent, so is $T$. 
\end{mainthm}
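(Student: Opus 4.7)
The plan is to exploit the graded Künneth-type (Goto-Watanabe) decomposition for the local cohomology of a Segre product. Writing $r = \dim R$ and $s = \dim S$ so that $\dim T = r + s - 1$, each $H^n_{\fm_T}(T)$ splits canonically into summands of the form $H^i_{\fm_R}(R) \# H^j_{\fm_S}(S)$ together with boundary pieces $R \# H^n_{\fm_S}(S)$ and $H^n_{\fm_R}(R) \# S$ in which one factor is the ring itself. The natural Frobenius on $T$ respects this splitting and acts as $F \otimes F$ on each graded component $M_k \otimes N_k$ of a Segre piece $M \# N$. The elementary observation that drives everything is: a Segre piece $M \# N$ is pointwise $F$-nilpotent in a given degree $k$ of its support whenever one of its factors is, since $F^e(x) = 0$ or $F^e(y) = 0$ implies $F^e(x \otimes y) = 0$.

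\medskip

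I would then run a summand-by-summand analysis on the pieces contributing to $H^n_{\fm_T}(T)$ for $n < r + s - 1$. For the purely mixed summands $H^i_{\fm_R}(R) \# H^j_{\fm_S}(S)$, the indexing of the decomposition forces at least one of $i < r$ or $j < s$ to be strict; weak $F$-nilpotence of $R$ and $S$ makes the corresponding factor pointwise $F$-nilpotent, hence so is the summand. The genuine potential obstruction therefore lies in the boundary summands $R \# H^n_{\fm_S}(S)$ and $H^n_{\fm_R}(R) \# S$: there the ring factor is nonzero in every nonnegative degree and has non-nilpotent Frobenius there, so pointwise $F$-nilpotence of the summand reduces exactly to pointwise $F$-nilpotence of the cohomology factor throughout the support of the Segre product. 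This is precisely what $b(R) = b(S) = \infty$ encodes, giving the equivalence. Under this hypothesis, every summand of $H^n_{\fm_T}(T)$ has at least one fully pointwise $F$-nilpotent factor, from which $b(T) = \infty$ follows by the same analysis.

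\medskip

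For the generalized weakly $F$-nilpotent statement, I would upgrade the central observation to the assertion that the ``bad'' (non-nilpotent) quotient of a Segre product $M \# N$ is controlled by the Segre products of the bad parts of the factors. The key finiteness input is that if $N$ is a finite length graded module, then $L \# N$ is again of finite length for any $L$, since the Segre product is supported only where $N$ is supported---in finitely many degrees, each contributing a finite-dimensional $k$-vector space. Running this summand-wise through the Goto-Watanabe decomposition, the finite-length bad parts of $H^j_{\fm_R}(R)$ and $H^j_{\fm_S}(S)$ propagate to finite-length bad parts of $H^n_{\fm_T}(T)$, yielding the generalized weak $F$-nilpotence of $T$ with no further hypothesis. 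The main obstacle I foresee is the bookkeeping in the Goto-Watanabe decomposition---in particular the low-degree correction terms and cases where $R$ or $S$ fails to be Cohen-Macaulay---together with the degree-wise translation between pointwise nilpotence of local cohomology factors and the $b$-invariant condition.
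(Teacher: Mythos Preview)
Your approach is essentially the paper's: apply the K\"unneth decomposition $H^n_{\fm_T}(T)\cong H^n_{\fm_R}(R)\# S \oplus R\# H^n_{\fm_S}(S)\oplus\bigoplus_{i+j=n+1} H^i_{\fm_R}(R)\# H^j_{\fm_S}(S)$ and test each summand for (generalized) nilpotence via the rule $\nilsup(M\# N)=\nilsup M\cap\nilsup N$. Your treatment of the weakly $F$-nilpotent equivalence and of $b(T)=\infty$ matches the paper's argument in Theorem~\ref{thm:fdepthSegre} and Corollary~\ref{cor:wFnSegre}; the degree-$0$ piece $[H^{d_R}_{\fm_R}(R)\# S]_0\cong[H^{d_R}_{\fm_R}(R)]_0$ is exactly the obstruction, and $b(R)=\infty$ is precisely its nilpotence.

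There is one small but genuine gap in your generalized weakly $F$-nilpotent argument. Your ``key finiteness input'' is that if one factor has finite-length bad quotient then so does the Segre product. This handles all mixed summands $H^i_{\fm_R}(R)\# H^j_{\fm_S}(S)$ for $n<d_T$, since $i+j=n+1<d_R+d_S$ forces $i<d_R$ or $j<d_S$ and then generalized weak $F$-nilpotence supplies the finite-length bad part. But it does \emph{not} handle the boundary summand $H^{d_R}_{\fm_R}(R)\# S$ appearing in $H^{d_R}_{\fm_T}(T)$: here $H^{d_R}_{\fm_R}(R)/0^F$ is typically \emph{not} of finite length, and neither is $S/0^F_S$, so neither factor has a finite-length bad part to feed into your criterion. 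The paper's fix (Theorem~\ref{thm:gfdepthSegre}) is a different, purely degree-theoretic observation: $\degsup(H^j_{\fm_R}(R)\# S)\subset(-\infty,a_j(R)]\cap[0,\infty)$ is finite for \emph{every} $j$, so the entire module $H^j_{\fm_R}(R)\# S$ is already of finite length, hence trivially generalized nilpotent. You have all the ingredients for this (you note Segre products are supported only in the common degrees), so the repair is immediate once you apply degree-support finiteness directly rather than routing through finite-length bad quotients.
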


Segre products of the form $E \times \PP^1$ where $E$ is an elliptic curve are studied in \cite{SW05}, and are known to be among the simplest normal non-Cohen-Macaulay singularities in dimension three. To give an application of the result above, we study nilpotence properties of a similar class where $E$ is replaced by a degree $d$ Fermat hypersurface. 

\begin{mainthm}(Theorem \ref{thm:FermatSegre})
Let $p>d$ and $n \geq 2$, and let $R = k[x_0,\ldots,x_n]/(x_0^d + \cdots + x_{n-1}^d - x_n^d)$, $S = k[u,v]$, and $T=R\# S$. When $p \equiv -1 \bmod d$, $T$ is weakly $F$-nilpotent.
\end{mainthm}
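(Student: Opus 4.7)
The plan is to apply the Segre product theorem (Corollary \ref{cor:wFnSegre}). Both $R$ and $S = k[u,v]$ are Cohen-Macaulay (the former as a hypersurface in $n+1$ variables, the latter as a polynomial ring), so both are weakly $F$-nilpotent; their only nonzero local cohomology modules are $H^n_{\fm_R}(R)$ (supported in degrees $\le a(R) = d-n-1$) and $H^2_{\fm_S}(S)$ (supported in degrees $\le -2$). Because $R$ is non-negatively graded, the Segre contribution $R \# H^2_{\fm_S}(S)$ vanishes in every Segre degree, so verifying the hypothesis of the Segre theorem reduces to showing that Frobenius acts nilpotently on $H^n_{\fm_R}(R)_m$ for every $m \ge 0$. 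For $m \ge 1$ this is immediate: Frobenius sends $H^n_{\fm_R}(R)_m$ into $H^n_{\fm_R}(R)_{mp}$, which vanishes since $mp \ge p > d > a(R)$.

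The degree-$0$ case is the substantive content. By graded local duality $H^n_{\fm_R}(R)_0 \cong \Hom_k(R_{d-n-1}, k)$, and Frobenius dualizes to the Cartier operator $C_0 \colon R_{d-n-1} \to R_{d-n-1}$. For the hypersurface $R = A/(f)$ with $A = k[x_0, \ldots, x_n]$, one has the explicit formula $C_0(g) = C_A(g \cdot f^{p-1})$, where $C_A(x^\beta) = \prod_i x_i^{(\beta_i + 1)/p - 1}$ if $p \mid \beta_i + 1$ for every $i$ and vanishes otherwise. For each monomial $g = x^\alpha$ with $|\alpha| = d-n-1$, the multinomial expansion of $f^{p-1}$ yields contributing terms indexed by tuples $(e_0, \ldots, e_n)$ with $\sum_i e_i = p-1$ satisfying $d e_i + \alpha_i \equiv -1 \pmod{p}$ for every $i$. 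Writing $p = dm - 1$, so $d^{-1} \equiv m \pmod{p}$, this residue condition uniquely forces $e_i \equiv -m(1+\alpha_i) \pmod{p}$; the bound $m(1+\alpha_i) \le m(d-1) = p + 1 - m < p$ for $m \ge 2$ pins the representative in $[0, p-1]$ to $e_i = p - m(1 + \alpha_i)$. Summing gives $\sum_i e_i = (n+1)p - md = np - 1$, which exceeds $p - 1$ when $n \ge 2$; hence no valid tuple exists, $C_0(x^\alpha) = 0$ for every monomial, and $C_0 \equiv 0$ by linearity. Frobenius therefore kills $H^n_{\fm_R}(R)_0$ in a single step.

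Combining the two cases, Frobenius acts nilpotently on the entire non-negative-degree part of $H^n_{\fm_R}(R)$, and the Segre theorem yields that $T$ is weakly $F$-nilpotent. The main obstacle is the Cartier computation: recognizing that the hypothesis $p \equiv -1 \pmod d$ is precisely the arithmetic condition that uniquely determines each $e_i$ and forces the overshoot $\sum_i e_i - (p-1) = (n-1)p > 0$ whenever $n \ge 2$.
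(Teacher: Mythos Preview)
Your argument is correct. The overall strategy matches the paper's: both reduce, via the K\"unneth description and Corollary~\ref{cor:wFnSegre}, to showing that $[H^n_{\fm_R}(R)]_0$ is Frobenius-nilpotent, and both then carry out an explicit computation exploiting $p\equiv -1\pmod d$. The difference is in how the degree-$0$ computation is organised. The paper (Lemma~\ref{lem:FermatHyp}) works directly with the \v{C}ech description of $H^n_{\fm_R}(R)$, applying $F$ to the classes $\eta_{\ui}=[x_n^{d-i_n}/(x_0^{i_0}\cdots x_{n-1}^{i_{n-1}})]$ and using the hypersurface relation to expand $x_n^{dr}=(x_0^d+\cdots+x_{n-1}^d)^r$. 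You instead pass through graded local duality to the Cartier operator on $R_{d-n-1}$ and invoke the hypersurface trace formula $C_0(g)=C_A(f^{p-1}g)$. The underlying arithmetic is dual: your congruences $de_i+\alpha_i\equiv -1\pmod p$ correspond to the paper's divisibility constraints $dj_t<pi_t$, and both produce the same overshoot (your $\sum e_i=np-1$ versus the required $p-1$) that forces vanishing once $n\ge 2$. Your Cartier formulation is arguably tidier arithmetically---the residue condition pins down each $e_i$ uniquely and the summation is transparent---while the paper's \v{C}ech approach has the advantage of avoiding duality and the Fedder-type trace formula altogether.
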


The $F$-rationality of Veronese subrings have been explored in detail \cite{Sin00}, making them also a natural class to study for (generalized) weakly $F$-nilpotent singularities. 

\begin{mainthm}(Theorems \ref{thm:F-depth and gF-depth for Veronese} and \ref{thm:VeronesePuncSpec}) Let $R$ be a standard graded ring. If $R$ is (generalized) weakly $F$-nilpotent, so is $R^{(n)}$ for all $n$. If $R$ is weakly $F$-nilpotent and $F$-nilpotent on the punctured spectrum, then the following are equivalent.
\begin{enumerate}[label=(\alph*)]
\item $R$ is $F$-nilpotent,
\item $b(R)=\infty$,
\item for all $n \in\NN$, $R^{(n)}$ is $F$-nilpotent,
\item and for a single $n \in \NN$, $R^{(n)}$ is $F$-nilpotent.
\end{enumerate}
\end{mainthm}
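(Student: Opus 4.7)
For the first assertion, I would use the standard graded identification
\[
H^i_{\fm_{R^{(n)}}}(R^{(n)}) \;\cong\; \bigl[H^i_\fm(R)\bigr]^{(n)}
\]
as graded $R^{(n)}$-modules, where $[-]^{(n)}$ picks out the graded pieces whose degree is divisible by $n$. This isomorphism is compatible with the natural Frobenius actions, since Frobenius sends a degree-$d$ element to a degree-$pd$ element and so preserves divisibility of the degree by $n$. Consequently Frobenius-nilpotence on $H^i_\fm(R)$ restricts to Frobenius-nilpotence on the Veronese piece, and the finite-length non-nilpotent locus condition characterising generalized weakly $F$-nilpotent singularities passes to the Veronese subpiece as well. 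Tracking the $b_j$-style invariants gives the sharper statement for $\fdp$ and $\gfdp$.

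For the equivalence of (a)--(d) under the hypothesis that $R$ is weakly $F$-nilpotent and $F$-nilpotent on the punctured spectrum: (c) $\Rightarrow$ (d) is trivial; (a) $\Rightarrow$ (c) follows from the first assertion combined with the Veronese compatibility of tight closure in top local cohomology, $(0^*_{H^d_\fm(R)})^{(n)} = 0^*_{H^d_{\fm_{R^{(n)}}}(R^{(n)})}$; and (a) $\Leftrightarrow$ (b) is an unravelling of the definition of $b(R)$ once one knows, from the argument below, that the obstruction to $F$-nilpotence has bounded graded support.

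The heart of the theorem is the implication (d) $\Rightarrow$ (a). Set $M := 0^*_{H^d_\fm(R)}/N$ where $N$ is the Frobenius-nilpotent submodule of $0^*_{H^d_\fm(R)}$; by construction $F$ acts injectively on $M$. The hypothesis that $R$ is $F$-nilpotent on the punctured spectrum forces $M_\fp=0$ for every non-maximal $\fp$, so $M$ has finite length and is therefore concentrated in a bounded range of graded degrees, say in some interval $[a,b]\subset\ZZ$. For a nonzero homogeneous $x\in M$ of degree $d\neq 0$, the element $F^e(x)$ sits in degree $p^ed$, which eventually leaves $[a,b]$; injectivity of $F$ on $M$ then forces $x=0$, so $M$ is concentrated in degree $0$. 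The hypothesis that $R^{(n)}$ is $F$-nilpotent for some $n$ gives nilpotence of Frobenius on $(0^*_{H^d_\fm(R)})^{(n)}$, which contains the degree-$0$ piece $M_0$ for every $n\geq 1$, so $M_0=0$ and hence $M=0$. Thus $R$ is $F$-nilpotent. The main obstacle is precisely this step: the interplay between the finite-length control coming from the punctured-spectrum hypothesis, the graded degree-shifting behaviour of Frobenius that rules out non-zero degrees, and the Veronese hypothesis that disposes of degree zero. A secondary technical point is verifying the Veronese compatibility of tight closure in top local cohomology on which the remaining implications depend.
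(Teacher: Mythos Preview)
Your proposal is correct and follows the same approach as the paper: use the identification $H^j_{\fm_{R^{(n)}}}(R^{(n)}) \cong H^j_\fm(R)^{(n)}$ compatibly with Frobenius for the first assertion, and for the equivalences, show the obstruction $0^*_{H^d_\fm(R)}/0^F_{H^d_\fm(R)}$ is concentrated in degree zero (via finite length from the punctured-spectrum hypothesis plus injectivity of Frobenius on this quotient) and then identify the degree-zero pieces across all $n$. The one place the paper streamlines your argument is the technical point you flag on Veronese compatibility of tight closure: the paper observes that $[H^d_\fm(R)]_0 \subset 0^*_{H^d_\fm(R)}$ automatically (take a homogeneous $c$ of degree exceeding $a_d(R)$), so in degree zero the tight closure is the entire piece for both $R$ and $R^{(n)}$, making the needed compatibility $[0^*_{H^d_\fm(R)}]_0 = [0^*_{H^d_{\fm_{R^{(n)}}}(R^{(n)})}]_0$ immediate without any general comparison of tight closures under Veronese.
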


Finally, we apply both the Segre and Veronese constructions to address (generalized) weak $F$-nilpotent properties of certain diagonal bigraded algebras introduced in \cite{KSSW09}, where the authors explore when these rings have $F$-rational or $F$-regular singularities. We use these diagonal algebras to construct rings with prescribed levels of nilpotence in their local cohomology modules, see Theorems~\ref{thm:f-depth of diagonal subalgebras} and \ref{thm:diagonalSubalgebraHypersurface}.

\

One major motivation for the study of these variants of $F$-nilpotent singularities is that they enjoy uniform control on Frobenius closures of parameter ideals. In particular, for any ideal $J$ in a local ring, the {\bf Frobenius test exponent} $\fte J$ is the smallest $e$ for which $(J^F)^{[p^e]} = J^{[p^e]}$. In a sense, $\fte J$ bounds the complexity of $J^F$, and is extremely helpful in computing the ideal $J^F$. In particular, a uniform upper bound on $\fte J$ over all ideals would be effective in understanding the Frobenius closure of all ideals simultaneously, but Brenner \cite{Bre06} showed this is generally not possible even for nice rings of low dimension. In contrast, some of the major results of \cite{KS06,HKSY06,Quy19,Mad19} are aimed at providing uniform control on $\fte \fq$ over all parameter ideals $\fq$ simultaneously in the presence of a singularity types defined in terms of the local cohomology modules, e.g. Cohen-Macaulay rings in \cite{KS06} and weakly $F$-nilpotent rings in \cite{Quy19}. These bounds are given in terms of the Hartshorne-Speiser-Lyubeznik numbers of the ring. For each of the constructions explored in this paper, we calculate the Hartshorne-Speiser-Lyubeznik numbers in terms of the input data, allowing us to give upper bounds for their Frobenius test exponents, see \ref{thm:HSLGluing}, \ref{cor:Fte for Segre Products}, \ref{cor:fte* for gwfn Segre}, and \ref{thm:Fte for Veronese} for these results.


\

\noindent {\bf Acknowledgments:} We are extremely grateful to Ian Aberbach, Luis N\'{u}\~{n}ez-Betancourt, Alessandra Costantini, Hailong Dao, Jack Jeffries, Paolo Mantero, Vaibhav Pandey, Thomas Polstra, and Austyn Simpson for enlightening discussions from which the work here greatly benefited.

\section{Preliminaries}

This section reviews basic facts about local cohomology, modules with Frobenius actions, tight closure, and Frobenius test exponents. We utilize the following running assumptions. 

\begin{run}Throughout the paper, we assume all rings are noetherian, (graded\footnote{Graded in the sense of Section 5.}) local with unique (homogeneous) maximal ideal $\mfm$, of prime characteristic $p > 0$, and $F$-finite, i.e. the Frobenius map $F:R \rightarrow R$ is finite.  We denote by $R^\circ$ the complement of the minimal primes; $R^\circ = R\setminus \bigcup_{\fp \in \min(R)} \fp$. For $e \geq 1$ and $R$-module $M$, denote by $F_*^eM$ the restriction of scalars of $M$ along $F^e \colon R\rightarrow R$. We use the standard convention that $\sup \varnothing = -\infty$ and $\inf\varnothing=\infty$.\end{run}

\subsection{Modules with Frobenius actions: the local case}\label{sec:R[F]-modules}

The singularity types we study here are defined in terms of the map on the local cohomology modules of $R$ induced by the Frobenius endomorphism $F:R\rightarrow R$. This induced map is called a Frobenius action, and generally we state our results in the context of modules with Frobenius actions whenever possible. 	

\begin{dff}
For $R$-modules $M$ and $N$, an additive map $\rho:M\rightarrow N$ is \textbf{$p^e$-linear} for some $e \in \NN$ if $\rho(rm)=r^{p^e}\rho(m)$ for all $r\in R$, $m \in M$. A $p$-linear endomorphism of $M$ is called a \textbf{Frobenius action} on $M$, and we may write $(M,\rho)$ to emphasize the pair.
\end{dff}

\noindent A $p^e$-linear map $M\rightarrow N$ is equivalent to an $R$-linear map $M\rightarrow F^e_*N$. Further, it is well known that every Frobenius action is equivalent to a left module structure over a non-commutative ring $R[F]$, see for example \cite{EH08} for the construction. For convenience, we will refer to a left $R[F]$-module, equivalently an $R$-module with a Frobenius action, as an {\bf $R[F]$-module}, and we may say that $M$ is a finitely generated or artinian $R[F]$-module to mean that $M$ is finitely generated or artinian $R$-module which has a given Frobenius action. An $R$-linear map which is also left $R[F]$-linear will be said to \textbf{commute with Frobenius}. 

\begin{rmk}We tacitly use that left modules over a non-commutative ring still form an abelian category. For example, we will repeatedly use homological algebra in the category of $R[F]$-modules to obtain induced Frobenius actions on (co)homology of complexes and connecting maps which commute with the Frobenius actions on each module and are \textit{a fortiori} $R$-linear. \end{rmk}

We now explicate the primary example of $R[F]$-modules we consider throughout this paper. A more carefully detailed version of the example below is given in \cite[Reminder~2.1]{Sha06}

\begin{xmp}
The most important examples of $R[F]$-modules are arguably local cohomology modules. Specifically, for each ideal $J$ in a ring $R$, the Frobenius endomorphism $F: R \rightarrow R$ induces a natural Frobenius action on the local cohomology modules $F: H^j_J(R) \rightarrow H^j_J(R)$. Explicitly, if $J = (x_1,\ldots, x_t)$, then we identify the top local cohomology module with support in $J$ with the direct limit $H^t_J(R) = \varinjlim R/(x_1^j,\ldots,x_t^j)$. For $\xi = [z+(x_1^j,\ldots,x_t^j)]$ in $H^t_J(R)$, $F(\xi)=[z^p+(x_1^{jp},\ldots,x_t^{jp})]$. We reserve the symbol $F$ for this canonical action on $H^j_J(R)$ or for the Frobenius endomorphism on a ring.\end{xmp}

\noindent We will need the following basic but important concepts which arise in the study of $R[F]$-modules. 

\begin{dff}\label{dff:R[F]-module basics}
Let $(M,\rho)$ and $(M',\rho')$ be $R[F]$-modules. 
\begin{itemize}
\item An $R$-submodule $N\subset M$ is \textbf{$\rho$-stable} or \textbf{$\rho$-compatible} if $\rho(N)\subset N$. The $\rho$-stable submodules of $M$ are exactly the left $R[F]$-submodules of $M$.

\item The \textbf{orbit closure} of a $\rho$-stable submodule $N\subset M$ is the $\rho$-stable $R$-submodule: \[N^\rho_M = \{ m \in M \mid \rho^e(m) \in N \text{ for some } e \in \NN\},\] and $N$ is called \textbf{$\rho$-closed} if $N^\rho_M = N$. 

\item We say $M$ is \textbf{nilpotent} if $0^\rho_M = M$, i.e. for each $m \in M$ there is an $e \in \NN$ such that $\rho^e(m)=0$.

\item We say $M$ is \textbf{generalized\footnote{We use the word ``generalized" here to indicate a finite length version of the condition considered as for generalized Cohen-Macaulay rings.} nilpotent} if $M/0^\rho_M$ is a finite length $R$-module.
\end{itemize}
\end{dff} 

\begin{xmp}
Let $(M,\rho)$ be a $R[F]$-module. For $I\subset R$ an ideal, $\rho(IM) \subset I^{[p]} \rho(M) \subset IM$, so $IM$ is a $\rho$-stable submodule of $M$. Note that $\sqrt{I}M \subset (IM)^\rho_M$. 
\end{xmp}

We now outline some preparatory facts and definitions about nilpotent $R[F]$-modules which will be aimed at constructing rings where the local cohomology modules have a prescribed level of nilpotence. We note that checking nilpotence can be done at the level of groups. 

\begin{rmk}\label{rmk:forgetful fctr}
Define a category $\mathcal{C}$ with objects consisting of pairs $(G,\rho)$ where $G$ is an abelian group and $\rho \colon G\rightarrow G$ is a fixed abelian group endomorphism and morphisms between pairs $(G,\rho)$ and $(G',\rho')$ consisting of abelian group homomorphisms $f \colon (G,\rho)\rightarrow (G',\rho')$ such that $f\circ \rho = \rho' \circ f$. There is a forgetful functor $\mathcal{F}$ from the category of $R[F]$-modules $(M,\rho)$ to $\mathcal{C}$, where $\mathcal{F}(M,\rho)=(M,\rho)$ is viewed as an abelian group with fixed endomorphism $\rho$. As such $\rho^e \colon M\rightarrow M$ vanishes if and only if $\mathcal{F}(\rho)^e \colon \mathcal{F}(M)\rightarrow \mathcal{F}(M)$ vanishes, so it suffices to check nilpotence of $(M,\rho)$ by passing to $\mathcal{C}$.
\end{rmk}

When attempting to show an $R[F]$-module is nilpotent, we are further aided by a striking uniformity property exhibited by finitely generated or artinian $R$-modules with Frobenius actions. 

\begin{dff}
Let $(M,\rho)$ be an $R[F]$-module. The \textbf{Hartshorne-Speiser-Lyubeznik number of $M$} is defined as follows: \[ \hsl M=\inf\{ e \in \NN \mid  \rho^e(m)=0 \text{ for all } m \in 0^\rho_M\} \in \NN \cup \{\infty\}. \] 
\end{dff}

Note if $M$ is finitely generated, then $\hsl M<\infty$ since $0^\rho_M$ is finitely generated. Another extremely important case where HSL numbers are finite is given below, due to Hartshorne-Speiser \cite[Prop.~1.11]{HS77}, Lyubeznik \cite[Prop.~4.4]{Lyu97}, and Sharp \cite[Cor.~1.8]{Sha06}. 

\begin{thm}[The Hartshorne-Speiser-Lyubeznik Theorem]\label{thm:HSL thm}
If $M$ is an artinian $R[F]$-module, then $\hsl M<\infty$.
\end{thm}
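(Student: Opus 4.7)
The plan is to reduce via Matlis duality to a Noetherian statement about a Cartier-type operator, where the conclusion becomes an ascending-chain-style argument. First, I would replace $M$ by its $\rho$-nilpotent part $0^\rho_M$, which is again an artinian $R[F]$-submodule; the task then becomes to show that when every element of $M$ is individually killed by some power of $\rho$, a single uniform power $\rho^{e_0}$ annihilates all of $M$. Since $M$ is artinian it is naturally an $\hat R$-module and the Frobenius action extends to the completion, so I may assume without loss of generality that $R$ is complete local, which makes Matlis duality a perfect antiequivalence between the categories of artinian and Noetherian $R$-modules.

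Next I would dualize the structure. The $p$-linear map $\rho$ corresponds to an $R$-linear map $\tilde\rho \colon M \to F_*M$, and applying $D := \Hom_R(\Arg, E_R(R/\fm))$ produces an $R$-linear map $\phi \colon F_*N \to N$ on the Noetherian module $N := D(M)$, where I use $F$-finiteness to identify $D(F_*M)$ with $F_*N$. Under this duality, the ascending chain of kernels $\ker \rho^e \subseteq M$ corresponds to the descending chain of images $\image \phi^e \subseteq N$; the hypothesis $M = 0^\rho_M$ translates to $\bigcap_e \image \phi^e = 0$, while the desired conclusion $\rho^{e_0}(M) = 0$ translates to $\image \phi^{e_0} = 0$.

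The heart of the proof is then the following Noetherian statement: if $N$ is a Noetherian $R$-module and $\phi \colon F_*N \to N$ is $R$-linear with $\bigcap_e \image \phi^e = 0$, then $\image \phi^{e_0} = 0$ for some $e_0$. Here I would exploit the recursion $\image \phi^{e+1} = \phi(F_*(\image \phi^e))$, which makes the descending sequence a shrinking chain of finitely generated $R$-submodules whose stable intersection $\bigcap_e \image \phi^e$ is itself a $\phi$-surjective submodule of $N$. Combining this recursive structure with the vanishing hypothesis and the Noetherian hypothesis on $N$ forces the chain to terminate after finitely many steps.

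The main obstacle is this final Noetherian step, since a general descending chain of submodules in a Noetherian module need not stabilize; one must genuinely exploit the Cartier structure of $\phi$ to upgrade vanishing-in-the-limit to finite vanishing. Once stabilization is in place, $\hsl M$ is bounded by the termination index of the chain, completing the proof.
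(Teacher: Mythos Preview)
The paper does not prove this theorem; it merely records it and cites Hartshorne--Speiser, Lyubeznik, and Sharp for the proof. So there is no ``paper's own proof'' to compare against, and your proposal should be judged on its own merits.

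Your reduction is sound and follows the modern Cartier-module viewpoint: passing to the completion, replacing $M$ by its nilpotent part, and Matlis-dualizing the $p$-linear map $\rho$ to a $p^{-1}$-linear (Cartier) map $\phi$ on a Noetherian module $N$. Two caveats. First, the identification $D(F_*M)\cong F_*N$ is not literally correct in general; over an $F$-finite complete local ring one has $D(F_*M)\cong F_*(\Hom_R(M,F^!E))$, and $F^!E$ differs from $E$ by a twist. This does not break the argument (one still obtains a Cartier-type structure on $N$ whose iterated images correspond to the duals of $M/\ker\rho^e$), but you should say so rather than assert an isomorphism that fails.

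Second, and more importantly, you openly leave the crux unproved. The statement ``if $\bigcap_e \image\phi^e=0$ then $\image\phi^{e_0}=0$'' is exactly the content of the HSL theorem on the dual side; everything before it is bookkeeping. The standard way to close this gap is Gabber's stabilization lemma for Cartier modules: for $N$ Noetherian and $\phi:F_*N\to N$, the descending chain $\image\phi^e$ stabilizes (to the unique maximal $\phi$-surjective submodule). One proves this by noting that $\phi$ induces surjections $F_*(\image\phi^e/\image\phi^{e+1})\twoheadrightarrow \image\phi^{e+1}/\image\phi^{e+2}$, so the support of the successive quotients is weakly decreasing and their lengths at generic points are non-increasing; Noetherian induction on the support then forces eventual stabilization. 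Once the chain is stable, the stable value is $\phi$-surjective and contained in $\bigcap_e\image\phi^e=0$, so it is zero. Without this (or an equivalent argument such as Sharp's via the skew polynomial ring), your proposal is a correct reduction but not yet a proof.
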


\begin{xmp}
For $(M,\rho)=(R,F)$, the orbit closure of $0$ in $R$ is $0^F_R=\sqrt{0}$, so under the definition above we have: \[\hsl R=\min\left\lbrace e \in \NN \mid \sqrt{0}^{\fbp{e}} = 0\right\rbrace.\] 
\end{xmp}

It is common to use the following alternate definition of $\hsl R$.

\begin{dff} The \textbf{Hartshorne-Speiser-Lyubeznik number of $R$} is defined $\hsl R=\max\{\hsl H^j_\fm(R)\mid 0\le j\le  d \}\in \NN$. Note under this definition, $\hsl R<\infty$ by Theorem~\ref{thm:HSL thm}.
\end{dff}

\noindent The utility of a single uniform exponent of Frobenius which annihilates all the nilpotent elements in every local cohomology module of $R$ cannot be overstated. 

\begin{lem}\label{lem:NilSES}
Suppose the following is a complex of $R[F]$-modules. 
\begin{center}
\begin{tikzcd}
0 \arrow{r} & A \arrow{r} & B \arrow{r}{\pi} & C \arrow{r} & 0
\end{tikzcd}
\end{center} If the sequence is exact, then $B$ is nilpotent if and only if $A$ and $C$ are nilpotent. Further, if each of $A$, $B$, and $C$ has finite Hartshorne-Speiser-Lyubeznik number, then $B$ is generalized nilpotent if and only if $A$ and $C$ are generalized nilpotent. 
\end{lem}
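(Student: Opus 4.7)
For the first assertion, we can argue directly. If $B$ is nilpotent, then any $a\in A$ satisfies $F^{e}(a)=0$ in $A$ (since the inclusion commutes with Frobenius), and any $c=\pi(b)\in C$ satisfies $F^{e}(c)=\pi(F^{e}(b))=0$, so both $A$ and $C$ are nilpotent. Conversely, for $b \in B$, nilpotence of $C$ yields $e_1$ with $F^{e_1}(b) \in \ker\pi=A$, and nilpotence of $A$ then yields $e_2$ with $F^{e_1+e_2}(b)=0$.

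For the second assertion, set $e = \max\{\hsl A, \hsl B, \hsl C\}$, which is finite by hypothesis, and write $A'=0^F_A$, $B'=0^F_B$, $C'=0^F_C$. Since $e$ dominates all three HSL numbers, each of these orbit closures coincides with the kernel of $F^e$ on its respective module. Consequently $A'=A\cap B'$ and $\pi(B')\subseteq C'$, giving rise to the short exact sequence of $R[F]$-modules
\[
0 \longrightarrow A/A' \longrightarrow B/B' \longrightarrow C/\pi(B') \longrightarrow 0.
\]
For the forward direction, finite length of $B/B'$ descends to the submodule $A/A'$ and to the quotient $C/\pi(B')$, whose further quotient $C/C'$ (using $\pi(B')\subseteq C'$) is also of finite length.

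Conversely, assume $A/A'$ and $C/C'$ are of finite length. Via the displayed sequence, it suffices to show $C/\pi(B')$ has finite length, and since $C/C'$ does, the problem reduces to bounding $C'/\pi(B')$. Apply the snake lemma to $0\to A\to B\to C\to 0$ with vertical maps $F^e$ to obtain the long exact sequence
\[
0 \to A' \to B' \to C' \xrightarrow{\delta} A/F^e(A) \to B/F^e(B) \to C/F^e(C) \to 0,
\]
which identifies $C'/\pi(B')$ with a submodule of $A/F^e(A)$ via $\delta$. The crucial technical input is the vanishing $A' \cap F^e(B) = 0$: indeed, if $a\in A'$ with $a=F^e(b)$, then $F^e(a)=F^{2e}(b)=0$ forces $b\in B'$, whence $a=F^e(b)=0$. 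In particular $A'\cap F^e(A)=0$, so $A'\hookrightarrow A/F^e(A)$ is injective, and we claim $\delta(C')$ meets this copy of $A'$ only in zero. Tracing the connecting map: if $c\in C'$ lifts to $b\in B$ and $\delta(c)=[F^e(b)]\in A'+F^e(A)$, write $F^e(b)=a'+F^e(a_0)$ with $a'\in A'$, $a_0\in A$, so that $F^e(b-a_0)=a'\in A'\cap F^e(B)=0$; hence $b-a_0\in B'$ and $c=\pi(b)\in\pi(B')$. Therefore $C'/\pi(B')$ embeds into $(A/A')/F^e(A/A')$, a quotient of the finite-length module $A/A'$, so $C'/\pi(B')$ has finite length, completing the argument.

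The primary obstacle, as I see it, is isolating the short exact sequence $0 \to A/A' \to B/B' \to C/\pi(B') \to 0$ and then recognizing through the snake lemma, together with the vanishing $A'\cap F^e(B)=0$, that the ``extra'' piece $C'/\pi(B')$ beyond $C/C'$ is actually a subquotient of $A/A'$. Once these two identifications are in place, the remaining steps are formal diagram-chasing.
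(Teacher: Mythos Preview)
Your argument for the first assertion is fine, and your overall strategy for the second is sound, but it differs substantially from the paper's and carries one subtle gap.

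The paper never passes to the quotients $A/A'$, $B/B'$, $C/\pi(B')$ or invokes the snake lemma. Instead it uses directly the characterization ``$M$ is generalized nilpotent iff $\mfm^n M$ is nilpotent for some $n$.'' For the forward direction, if $\mfm^n B\subset 0^\rho_B$ then $\mfm^n A$ is nilpotent as a submodule and $\mfm^n C=\pi(\mfm^n B)\subset 0^\rho_C$. For the converse, choosing $n$ with $\mfm^n A$ and $\mfm^n C$ nilpotent, one has $\pi(\mfm^n B)=\mfm^n C\subset 0^\rho_C$; applying $\rho^{e_0}$ with $e_0=\hsl C$ pushes $\mfm^n B$ into $A$, and then the identity $\rho^{e_0}(\mfm^{2n}B)=F^{e_0}(\mfm^n)\cdot\rho^{e_0}(\mfm^n B)\subset \mfm^n A\subset 0^\rho_A$ finishes. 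This is shorter and more elementary than your route, and it uses only $\hsl C<\infty$ rather than all three HSL numbers.

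The gap in your argument: the connecting map $\delta$ in your snake-lemma sequence is $p^e$-linear, not $R$-linear, because $F^e$ is not an $R$-module endomorphism. So your embedding $C'/\pi(B')\hookrightarrow (A/A')/F^e(A/A')$ is an embedding of abelian groups, or equivalently an $R$-linear embedding into $F^e_*\bigl((A/A')/F^e(A/A')\bigr)$. To conclude finite length you must observe that $F^e_*$ preserves finite length, which holds under the paper's standing $F$-finiteness hypothesis. Once you insert this remark, your proof is complete; without it the final sentence does not follow.
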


\begin{proof}
We prove only the second claim, since the first is a simpler version. Suppose $B$ is generalized nilpotent. As $\mfm^n A\subset \mfm^n B = 0^\rho_B$, its clear that $\mfm^n A$ is nilpotent. Further, $\pi$ is surjective so $\mfm^n C = \pi(\mfm^n B) = \pi(0^\rho_B)$, so $\mfm^n C \subset 0^\rho_C$. If instead $A$ and $C$ are generalized nilpotent, we can pick an $n \gg 0$ so that $\mfm^n A$ and $\mfm^n C$ are nilpotent. Thus $\pi(\mfm^n B )=\mfm^n\pi(B)=\mfm^nC=0^\rho_C$. 

If $\hsl C=e_0$, then $\rho^{e_0}\circ \pi=\pi\circ\rho^{e_0}$ kills $\mfm^n B$, i.e. $\rho^{e_0}(\mfm^n B)\subset A$. Since $\mfm^n A$ is nilpotent, the abelian subgroup generated by $F^{e_0}(\mfm^n) A$ is inside $0^\rho_A$, and we have \[
F^{e_0}(\mfm^n) \rho^{e_0}(\mfm^n B) = \rho^{e_0}(\mfm^{2n} B) \subset 0^\rho_A
\] so $\mfm^{2n} B$ is nilpotent as $0^\rho_A$ is $\rho$-closed.
\end{proof}

Lemma \ref{lem:NilSES} shows the full subcategory of nilpotent $R[F]$-modules form a (weak) Serre subcategory of $R[F]$-modules. Generally, if $\alpha:(M,\rho)\rightarrow (M',\rho')$ is $R[F]$-linear, we have $\alpha(0^\rho_M)\subset 0^{\rho'}_{M'}$, so $\alpha^{-1}(0^{\rho'}_{M'})\supset 0^\rho_M$. As in the main theorem of \cite{Mad19}, when lifts of nilpotent element are nilpotent, we can obtain bounds on the HSL number of $M'$ in terms of the HSL numbers for $\im(\alpha)$ and $\operatorname{coker}(\alpha)$. We codify this formally in the next lemma. 

\begin{lem}\label{lem:lift of nilp is nilp hsl bounds}
Suppose the following is a complex of $R[F]$-modules. \begin{center}
\begin{tikzcd}
A \arrow{r}{\alpha} & B \arrow{r}{\beta} & C
\end{tikzcd}
\end{center}
If the sequence is exact, then $\alpha^{-1}(0^\rho_B)= 0^\rho_A$ if and only if $\ker(\alpha)$ is a nilpotent $R[F]$-submodule of $A$, and if so, $\hsl B\le \hsl A+\hsl C$. Finally, if the sequence is split exact, then $\hsl B=\max\{\hsl A,\hsl C\}$.
\end{lem}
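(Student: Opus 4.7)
The plan is to handle the three assertions in order, using only two elementary facts already in play: the orbit closure $0^\rho_M$ is always a $\rho$-stable submodule, and any $R[F]$-linear map sends $0^\rho$ into $0^\rho$.

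For the biconditional in the first claim, both inclusions $\ker(\alpha) \subset \alpha^{-1}(0^\rho_B)$ and $0^\rho_A \subset \alpha^{-1}(0^\rho_B)$ are automatic. The forward direction then forces $\ker(\alpha) \subset 0^\rho_A$, i.e.\ nilpotence of $\ker(\alpha)$ as an $R[F]$-submodule of $A$. For the converse, given $a \in \alpha^{-1}(0^\rho_B)$ I would pick $e$ with $\alpha(\rho^e a) = \rho^e \alpha(a) = 0$, putting $\rho^e a$ in $\ker(\alpha) \subset 0^\rho_A$; a further application of $\rho$ then kills $a$, so $a \in 0^\rho_A$.

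For the HSL bound in the second claim, assume the equivalent conditions of the first. Given $b \in 0^\rho_B$, the $R[F]$-linearity of $\beta$ places $\beta(b) \in 0^\rho_C$, so $\rho^{\hsl C}\beta(b) = 0$ and hence $\rho^{\hsl C}(b) \in \ker(\beta) = \im(\alpha)$. Writing $\rho^{\hsl C}(b) = \alpha(a')$, the $\rho$-stability of $0^\rho_B$ gives $\alpha(a') \in 0^\rho_B$, and the preimage hypothesis forces $a' \in 0^\rho_A$; applying $\rho^{\hsl A}$ then yields $\rho^{\hsl A + \hsl C}(b) = \alpha(\rho^{\hsl A} a') = 0$. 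The split-exact case is immediate from here: an $R[F]$-linear splitting identifies $B$ with $A \oplus C$ equipped with the diagonal Frobenius, so $0^\rho_B = 0^\rho_A \oplus 0^\rho_C$ and the minimal uniform exponent annihilating this submodule is visibly $\max\{\hsl A, \hsl C\}$.

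I do not anticipate any real obstacle; the argument is a short diagram chase. The one delicate point is the lifting step in the second claim, where after producing $a'$ with $\alpha(a') = \rho^{\hsl C}(b)$ one needs $a' \in 0^\rho_A$ rather than only $a' \in \alpha^{-1}(0^\rho_B)$. This is precisely why the first claim is invoked as a preliminary, and explains why the hypothesis of the HSL bound is phrased as a preimage equality rather than as injectivity of $\alpha$.
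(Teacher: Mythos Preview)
Your proof is correct and follows essentially the same route as the paper's: the biconditional is handled via the automatic inclusions $\ker(\alpha),\,0^\rho_A\subset\alpha^{-1}(0^\rho_B)$ together with an element-wise chase (the paper packages the reverse inclusion as $\alpha^{-1}(0^\rho_B)=\ker(\alpha)^\rho_A$, but the content is identical), the HSL bound is obtained by pushing $0^\rho_B$ through $\beta$ to land in $\im(\alpha)$ after $\hsl C$ iterates and then lifting via the first claim, and the split case is the diagonal decomposition $0^\rho_B=0^\rho_A\oplus 0^\rho_C$. The only cosmetic difference is that the paper notes the bound is vacuous when $\hsl A$ or $\hsl C$ is infinite before assuming finiteness, which you leave implicit.
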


\begin{proof}
Since $\alpha$ commutes with Frobenius, we automatically have $\alpha(0^{\rho_A}_A)\subset 0^{\rho_B}_B$. Then $\alpha^{-1}(0^\rho_B) = \ker(\alpha)^\rho_A$. If $\ker(\alpha)$ is nilpotent, then $\ker(\alpha)\subset 0^\rho_A$, so $\ker(\alpha)^\rho_A = 0^\rho_A$ as $0^\rho_A$ is $\rho$-closed. If $0^\rho_A = \ker(\alpha)^\rho_A$, then $\ker(\alpha)\subset \ker(\alpha)^\rho_A= 0^\rho_A$, so $\ker(\alpha)$ is nilpotent.

The claimed bound on $\hsl B$ is trivial if either of $\hsl A$ or $\hsl C$ are infinite, so suppose $\hsl A=a$ and $\hsl C=c$ are finite. In this case $\beta(0^\rho_B)\subset 0^\rho_C$, so $\rho^c(0^\rho_B)\subset \ker(\beta)=\im(\alpha)$. Since $\im(\alpha)$ is a subquotient of $A$, $\hsl \im(\alpha) \le a$ and by the above, $\alpha^{-1}(\rho^c(0^\rho_B)) = 0^\rho_A$, i.e. $\rho^{c+a}(0^\rho_B)=0$. 

The final assertion in the case that the sequence is a split short exact sequence is immediate since its trivial to check that $0^{\rho_A\oplus \rho_C}_{B} = 0^{\rho_A}_A \oplus 0^{\rho_C}_C$. \end{proof}

\subsection{Tight and Frobenius closure of submodules} 

The Frobenius action on the top local cohomology of a Cohen-Macaulay local ring has been studied frequently for its connections to tight closure theory. K. E. Smith showed that in many cases, the tight closure of $0$ in $H^d_\fm(R)$ was the largest $F$-stable submodule of $H^d_\fm(R)$. See \cite[Prop.~2.5]{Smi97} and \cite[Discussion~2.10]{EH08} for more. We now review tight and Frobenius closures of modules. 

\begin{dff}\label{dff:TC}
Let $N\subset M$ be $R$-modules. For any $c \in R$ and $e \in \NN$, define the map $\mu_c^e \colon M \rightarrow (M/N)\otimes_R F^e_*R$ to be the composition of the maps below. \begin{center}
\begin{tikzcd}
M \arrow{r}{\pi} & M/N \arrow{r}{\id \otimes F^e} & (M/N)\otimes_R F^e_*R \arrow{rr}{\id \otimes  F^e_*(c)} & & (M/N)\otimes_R F^e_*R
\end{tikzcd}
\end{center} The \textbf{tight closure of $N$ in $M$}, denoted $N^*_M$, is defined as \[
N^*_M = \left\lbrace m \in M \mid \,\text{there is } c \in R^\circ \text{ such that } m \in \ker(\mu_c^e) \text{ for all } e \gg 0\right\rbrace .
\] and the \textbf{Frobenius closure of $N$ in $M$}, denoted $N^F_M$, is similarly defined as: \[ N^F_M = \left\lbrace m \in M \mid m \in \ker(\mu^e_1) \text{ for all } e \gg 0\right\rbrace .\] 
\end{dff}

By definition, we have $N \subset N^F_M \subset N^*_M \subset M$. If $M=R$ and $N=I$ is an ideal of $R$, $I^*_R$ and $I^F_R$ are usually written in a different, but equivalent manner as below.\begin{align*}
I^* & = \left\lbrace x \in R \mid \,\text{there is }c \in R^\circ \text{ such that }cx^{p^e}\in I^{\fbp{e}} \text{ for all } e \gg 0 \right\rbrace \\
I^F & = \left\lbrace x \in R \mid \, x^{p^e}\in I^{\fbp{e}} \text{ for some } e \in \NN \right\rbrace 
\end{align*}

\begin{rmk} 
Since the name \textit{Frobenius closure} is established in its classical use above, we are careful to use \textit{orbit closure} when a Frobenius action is involved. For example, given an ideal $I\subset R$, the orbit closure of $I$ inside the $R[F]$-module $(R,F)$ is simply the radical $\sqrt{I}$, typically much larger than its classical Frobenius closure $I^F$.

If $I=(x_1,\cdots,x_t)$, the canonical Frobenius map induced by Frobenius $F^e:H^t_I(R)\rightarrow F^e_*H^t_I(R)$ agrees with the map $\mu^e_1: H^t_I(R) \rightarrow H^t_I(R)\otimes_R F^e_*R$, but for the lower local cohomology modules the two are generally distinct. This implies that on a top local cohomology module, we need not distinguish between the Frobenius closure in Definition~\ref{dff:TC} and the orbit closure defined in Definition~\ref{dff:R[F]-module basics}.  
\end{rmk}

\begin{rmk}\label{rmk:testelements} The existential quantifier in Definition~\ref{dff:TC} can be simplified by using test elements. Recall, reduced excellent local rings $(R,\fm)$ admit big test elements, i.e. elements $c \in R^{\circ}$ so that for any pair of $R$-modules $N \subset M$, $\eta \in N_M^*$ if and only if $\mu_c^e(\eta) = 0$ for $e \gg 0$. Here big refers to lack of finite generation assumptions on the modules, and the nuance is that $c$ is independent of the modules $N$ and $M$. For proof, see \cite[Thm.~6.17, Cor.~6.26]{HH90}. 
\end{rmk}

\subsection{Frobenius test exponents}

The Frobenius test exponent was introduced by Katzman-Sharp in \cite{KS06}, where they show that $\hsl R$ can uniformly control the Frobenius closure of parameter ideals in a Cohen-Macaulay ring.  We direct readers to Huong-Quy \cite{HQ18} for a more details concerning Frobenius test exponent problem, but provide a short review below. Recall a \textit{parameter ideal} is one whose height is equal to its minimal number of generators.

\begin{dff}
Let $J\subset R$ be an ideal. The \textbf{Frobenius test exponent of $J$} is  defined as \[
\fte J=\min\{e\in \NN \mid (J^F)^{\fbp{e}}=J^{\fbp{e}}\}\in\NN ,
\] and the \textbf{Frobenius test exponent of $R$} is defined as\[
\fte R=\sup\{\fte \fq \mid \fq\subset R \text{ a parameter ideal}\}\in\NN\cup\{\infty\}.
\] 
\end{dff}

\noindent Note that $\fte J<\infty$ for all $J$ since $J^F$ is finitely generated. However, $\fte R$ is not guaranteed to be finite due to the possibility of a sequence of parameter ideals whose Frobenius test exponents tend to infinity \cite{Bre06}. When the ring has a nilpotent singularity type, HSL numbers are known to provide a uniform upper bound on $\fte \fq$ independent of the parameter ideal $\fq$ due to strong connections between local cohomology modules and parameter ideals, see Remark~\ref{rmk:finiteness theorems}. 

Few other classes of ideals are known to have uniformly upper bounded Frobenius test exponents. In a recent paper \cite{HQ21} of Huong-Quy, the authors give a sufficient condition in terms of generalized nilpotence to ensure that filter regular sequences of length at most a fixed value have uniformly bounded Frobenius test exponents, generalizing the main result of \cite{Mad19}.

\section{Nilpotent singularity types and depth-like invariants}

We remind the reader of a classical characteristic-free singularity type which generalizes the Cohen-Macaulay property. In particular, $R$ is \textbf{generalized Cohen-Macaulay} if $H^j_\fm(R)$ is finite length for $0 \le j < \dim R$. We now come to the definition of the nilpotent singularity types we will consider throughout the paper. 

\begin{dff}\label{dff:nilp singularity types}
We say $R$ is \textbf{weakly $F$-nilpotent} if $H^j_\fm(R)$ is nilpotent for $j < d$ and \textbf{generalized weakly $F$-nilpotent} if $H^j_\fm(R)$ is generalized nilpotent for all $j<d$. Finally, $R$ is \textbf{$F$-nilpotent} if it is weakly $F$-nilpotent and $0^*_{H^d_\fm(R)}=0^F_{H^d_\fm(R)}$.
\end{dff}

\noindent The names weakly and generalized weakly $F$-nilpotent were first used in \cite{Mad19}, though weakly $F$-nilpotent rings were studied earlier in \cite{Quy19} and \cite{PQ19}. Further, $F$-nilpotence was introduced in \cite{ST15}, though the notion appears earlier \cite[Def. 4.1]{BB05}. For readers more familiar with classical $F$-singularities, we note that in this setting, $R$ is both $F$-injective and $F$-nilpotent if and only if $R$ is $F$-rational, see \cite[Prop.~2.4]{ST15}. 

\

Our guiding intuition is that nilpotence is a robust replacement for vanishing; an idea arising in work of Lyubeznik. Under this perspective, it is natural to expect weakly $F$-nilpotent rings to share properties in common with Cohen-Macaulay rings and generalized weakly $F$-nilpotent rings to share properties in common with generalized Cohen-Macaulay rings. We make this analogy even more precise with the introduction with depth-like invariants in the later case. As an example of this perspective, we summarize recent work showing that nilpotent singularities exhibit finite Frobenius test exponents, a fact which was known first for Cohen-Macaulay rings due to Katzman-Sharp.

\begin{rmk} \label{rmk:finiteness theorems}
Let $h_j=\hsl H^j_\fm(R)$.
\begin{itemize}
\item If $R$ is Cohen-Macaulay, then $\fte R=\hsl R=h_d$. \cite[Thm.~2.4]{KS06}
\item If $R$ is generalized Cohen-Macaulay, then $\fte R<\infty$. \cite{HKSY06}.
\item If $R$ is weakly $F$-nilpotent, then $\fte R\le \sum_{j=0}^d {\binom{d}{j}} h_j$. \cite[Main Thm.]{Quy19} 
\item Suppose $R$ is generalized weakly $F$-nilpotent. Let $N\in \NN$ be the smallest $n$ so that $\fm^n H^j_\fm(R)\subset 0^F_{H^j_\fm(R)}$ for each $0 \le j < d$, finite by hypothesis. Further, let $e_1$ be the minimum $e \in \NN$ such that $p^e \ge 2^{d-1}N$. We then have $\fte R \le e_1+\sum_{j=0}^d \binom{d}{j} h_j$. \cite[Thm.~3.6]{Mad19}
\end{itemize}
\end{rmk}

\noindent This analogy between vanishing and nilpotence also informs a hierarchy among singularity types as outlined below. We note each vertical arrow is an equivalence if $R$ is $F$-injective.

\begin{center}\begin{tikzcd}
\text{{\em F}-rational} \arrow[Rightarrow]{r} \arrow[Rightarrow]{d} & \text{Cohen-Macaulay} \arrow[Rightarrow]{r} \arrow[Rightarrow]{d} & \text{generalized Cohen-Macaulay} \arrow[Rightarrow]{d} \\
\text{{\em F}-nilpotent} \arrow[Rightarrow]{r} & \text{weakly {\em F}-nilpotent} \arrow[Rightarrow]{r} & \text{generalized weakly {\em F}-nilpotent}
\end{tikzcd}\end{center}

\subsection{F-depth and generalized F-depth} \label{sec:(g)F-depth} 

Just as the classical theory of depth measures the first cohomological index with a nonvanishing local cohomology module, Lyubeznik's $F$-depth and the generalized $F$-depth we define here measure the first cohomological index $j$ where $H^j_\fm(R)$ is not (generalized) nilpotent. By demonstrating that depth and (generalized) $F$-depth act analogously in many situations, we are able to recapture many classic results about Cohen-Macaulay rings for rings with nilpotent singularity types. We typically accomplish this by providing lower bounds for (generalized) $F$-depth and then recalling that (generalized) weakly $F$-nilpotent rings as those for which the (generalized) $F$-depth is maximal. 

Note, since $F_*$ commutes with localization, an $R$-linear map $\rho_M: M \rightarrow F_* M$ induces an $R$-linear map $\rho \colon H^j_J(M)\rightarrow F_* H^j_J(M)$ for all ideals $J\subset R$. Equivalently, if $(M,\rho)$ is an $R[F]$-module, for any $J\subset R$ an ideal and $j\in\NN$ there is a natural map $\rho: H^j_J(M)\rightarrow H^j_J(M)$ induced by $\rho:M\rightarrow M$ so that $(H^j_J(M),\rho)$ is an $R[F]$-module. The construction of this induced map in the case of $(M,\rho)=(R,F)$ is spelled out explicitly in \cite[Reminder~2.1]{Sha06}.

\begin{dff}
The \textbf{$F$-depth} of an $R[F]$-module $(M,\rho)$ is \[ \fdp M=\inf\{j\in\NN \mid (H^j_\fm(M),\rho) \text{ is not nilpotent}\}.\] We define the \textbf{generalized $F$-depth} of $M$ to be \[
\gfdp M=\inf\{j\in\NN\mid (H^j_\fm(M),\rho) \text{ is not generalized nilpotent}\}. 
\] The $F$-depth and generalized $F$-depth of $R$ are understood to be with respect to the $R[F]$-module structure $(R,F)$.
\end{dff}

In \cite{Lyu06}, Lyubeznik used the $F$-depth of $R/I$ to answer a question of Grothendieck about vanishing of local cohomology with support in $I$. Note, by definition $R$ is weakly $F$-nilpotent if and only if $\fdp R = \dim R$ and generalized weakly $F$-nilpotent if and only if $\gfdp R=\dim R$. We record several basic facts about $F$-depth found in \cite{Lyu06} and generalized versions which help demonstrate that depth and (generalized) $F$-depth act analogously. The authors are grateful to T. Polstra for discussion leading to a proof of part of the third claim.

\begin{lem}\label{lem:Fdepthbasic}
We have the following bounds on $\fdp R$ and $\gfdp R$.
\begin{enumerate}[label=(\alph*)]
\item $0\le \fdp R\le \dim R$, and $\fdp R>0$ if $\dim R>0$
\item $\fdp R=\fdp\widehat{R}$ and unlike ordinary depth, $\fdp R=\fdp R/\sqrt{0}$
\item $\gfdp R \ge \fdp R$, and $\gfdp R \le \dim R$ if $R$ is equidimensional
\item $\gfdp R=\gfdp R/\sqrt{0} = \gfdp \widehat{R}$
\end{enumerate}
\end{lem}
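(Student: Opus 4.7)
The plan is to treat parts (a)--(d) in sequence, using the short exact sequence $0 \to \sqrt{0} \to R \to R/\sqrt 0 \to 0$ of $R[F]$-modules as the main engine for (b) and (d), and Matlis duality together with test elements for the upper bounds. For part (a), the lower bound $\fdp R \ge 0$ is immediate from the definition of infimum over $\NN$. To see $\fdp R > 0$ when $\dim R > 0$, I would show every $x \in H^0_\fm(R)$ is nilpotent under $F$: such $x$ satisfies $\fm^n x = 0$, so its image $\bar x$ lies in $H^0_\fm(R/\sqrt 0)$. Since $R/\sqrt 0$ is reduced of positive dimension, its associated primes are the minimal primes of $R$ and none equals $\fm$, so $H^0_\fm(R/\sqrt 0) = 0$, forcing $x \in \sqrt 0$. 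Then $F^e(x) = x^{p^e} = 0$ for $e$ large because $\sqrt 0$ is a nilpotent ideal. The upper bound $\fdp R \le \dim R$ reduces to the non-nilpotence of $H^d_\fm(R)$, the main obstacle discussed at the end.

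Parts (b) and (d) follow the same template. The completion identifications $\fdp R = \fdp \hat R$ and $\gfdp R = \gfdp \hat R$ come from the natural $R[F]$-module isomorphism $H^j_\fm(R) \cong H^j_{\hat\fm}(\hat R)$, noting that finite length over $R$ and $\hat R$ agree for $\fm$-torsion modules. For the reductions modulo nilpotents, the sequence $0 \to \sqrt 0 \to R \to R/\sqrt 0 \to 0$ of $R[F]$-modules yields a long exact sequence in local cohomology. Since $\sqrt 0$ is a nilpotent ideal, choosing $p^e$ larger than its index of nilpotence makes the Frobenius $F^e$ vanish on $\sqrt 0$, and functoriality then forces $F^e$ to vanish on each $H^j_\fm(\sqrt 0)$, making each uniformly nilpotent. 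Repeated application of Lemma \ref{lem:NilSES} to the four-term exact sequences extracted from the long exact sequence shows $H^j_\fm(R)$ is (generalized) nilpotent if and only if $H^j_\fm(R/\sqrt 0)$ is; for the generalized statement, the artinian local cohomology modules in sight have finite Hartshorne-Speiser-Lyubeznik numbers by Theorem \ref{thm:HSL thm}.

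For part (c), $\gfdp R \ge \fdp R$ is formal since nilpotent $R[F]$-modules are generalized nilpotent (take the quotient by orbit closure to be zero), so the indexing set defining $\gfdp$ is contained in that defining $\fdp$. For the upper bound when $R$ is equidimensional of positive dimension, pass to $\hat R$ via (d). If $H^d_\fm(R)/0^F_{H^d_\fm(R)}$ were of finite length, its Matlis dual would embed as a finite length submodule of $\omega_{\hat R} = H^d_\fm(R)^\vee$. But the canonical module of a complete equidimensional local ring of positive dimension satisfies Serre's $(S_2)$ condition and so has positive depth, admitting no non-zero finite length submodule; hence the dual vanishes, $H^d_\fm(R) = 0^F_{H^d_\fm(R)}$, and $H^d_\fm(R)$ itself would be nilpotent, contradicting the obstacle below.

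The main obstacle, backing both upper bounds, is showing $H^d_\fm(R)$ is not nilpotent for equidimensional $R$ of positive dimension. After passing to the complete reduced case via (b), I would invoke the big test elements of Remark \ref{rmk:testelements}: a direct argument shows Frobenius is injective on $H^d_\fm(R)/0^*_{H^d_\fm(R)}$, while standard tight closure theory guarantees this quotient is non-zero for reduced equidimensional $F$-finite local rings of positive dimension. Concretely, for a big test element $c \in R^\circ$ and a system of parameters $x_1,\ldots,x_d$, the class $[c/(x_1\cdots x_d)^n]$ eventually escapes $0^*$ by Krull's intersection theorem, yielding an element of infinite Frobenius orbit in $H^d_\fm(R)$.
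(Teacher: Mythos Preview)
Your treatment of (b) and (d) is essentially the paper's: both use the short exact sequence $0\to\sqrt{0}\to R\to R/\sqrt{0}\to 0$ of $R[F]$-modules together with the nilpotence of $\sqrt{0}$ (the paper packages this through the later Theorem~\ref{thm:Fdepthses}, you through Lemma~\ref{lem:NilSES} directly), and both handle completion via $H^j_\fm(R)\otimes_R\hat R\simeq H^j_{\hat\fm}(\hat R)$. The paper simply cites \cite{Lyu06} for (a) and (b), whereas you supply direct arguments, which is fine.

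The genuine divergence is in the second half of (c). You argue that a finite-length $H^d_\fm(R)/0^F$ would Matlis-dualize to a finite-length submodule of $\omega_{\hat R}$, and then invoke that $\omega_{\hat R}$ is $(S_2)$ for complete equidimensional rings to force this submodule to vanish, reducing to the non-nilpotence of $H^d_\fm(R)$. The paper instead shows directly that $\supp\bigl(H^d_\fm(R)/0^F\bigr)=\Spec(R)$: it cites \cite[Lem.~5.1]{KMPS}, which says that the Matlis dual of $H:=H^d_\fm(R)/0^F$ localizes at each prime $\fp$ to the Matlis dual of $H^{\hgt\fp}_{\fp R_\fp}(R_\fp)/0^F$, and the latter is nonzero for every $\fp$ by part (a). Your route leans on Aoyama-type results about the canonical module; the paper's route is more self-contained once (a) is in hand and yields the stronger conclusion that the support is all of $\Spec(R)$, not merely that it is not $\{\fm\}$.

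One gap to flag: you state the ``main obstacle'' (non-nilpotence of $H^d_\fm(R)$) only for equidimensional $R$, but the upper bound in (a) is asserted with no equidimensionality hypothesis. Your tight-closure sketch via a big test element $c$ and the classes $[c/(x_1\cdots x_d)^n]$ does go through for any reduced $F$-finite local ring of positive dimension, since the test element lies in $R^\circ$ and Krull's intersection theorem forces $c\notin(x_1^n,\ldots,x_d^n)$ for $n\gg 0$; just make sure you do not silently use equidimensionality there. Lyubeznik's original argument in \cite{Lyu06} handles this uniformly.
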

\begin{proof}
The first two claims are proven in Section 4 of \cite{Lyu06}. The first claim in (c) is obvious, since nilpotent modules are also generalized nilpotent. Now to the second claim in (c). For convenience, we set \[H(\fp) = H^{\operatorname{ht} \fp}_{\fp R_\fp}(R_\fp)/0^F_{H^{\operatorname{ht} \fp}_{\fp R_\fp}(R_\fp)}\] for any $\fp \in \operatorname{Spec}(R)$, and write $H=H(\fm)$. By (a), we know $H(\fp)\neq 0$ for all $\fp \in \Spec(R)$. Denoting by $(\Arg)^\vee$ the Matlis dual over $R$, \cite[Lem. 5.1]{KMPS} gives that $H^\vee$ localizes to the Matlis dual over $R_\fp$ of $H(\fp)$. Thus, $\supp(H)=\supp(H^\vee)=\Spec(R)$ and so $H$ cannot be finite length unless $d=0$, in which case $\gfdp R=0$ for free.

Now for the first claim in (d), consider the natural short exact sequence of finitely-generated $R[F]$-modules below: \begin{center}
\begin{tikzcd}
0 \arrow{r} & \sqrt{0} \arrow{r} & R\arrow{r} & R' \arrow{r} & 0
\end{tikzcd}
\end{center} where $R'=R/\sqrt{0}$. Since $\sqrt{0}$ is a finitely-generated nilpotent $R[F]$-module, $H^j_\fm(\sqrt{0})$ is also nilpotent, and hence generalized nilpotent, for all $j$. Thus, $\gfdp \sqrt{0}=\infty$ and so $\gfdp R=\gfdp R'$ by the later theorem \ref{thm:Fdepthses}.  

For the final claim in (d), write $(S,\fn)=(\widehat{R},\widehat{\fm})$, notably faithfully flat over $R$ with $\sqrt{\fm S}=\fn$. The Frobenius map $F \colon R\rightarrow R$ becomes the Frobenius map $F \colon S\rightarrow S$ upon tensoring with $S$. Furthermore, $H^j_\fm(R)\otimes_R S \simeq H^j_\fm(S) \simeq H^j_\fn(S)$ for any $j$ by the change-of-rings property of local cohomology. Thus, for $e\gg 0$ and any $j\in\NN$ the exact sequence of $R[F]$-modules given below \begin{center}
\begin{tikzcd}
0 \arrow{r} & 0^F_{H^j_\fm(R)} \arrow{r} & H^j_\fm(R) \arrow{r}{F^e} & H^j_\fm(R) 
\end{tikzcd}
\end{center} becomes the following the exact sequence of $S[F]$-modules after applying $\bullet\otimes_R S$. \begin{center}
\begin{tikzcd}
0\arrow{r} & 0^F_{H^j_\fn(S)} \arrow{r} & H^j_\fn(S) \arrow{r}{F^e} & H^j_\fn(S)
\end{tikzcd}
\end{center} Hence, for any $j \in \NN$, we have $J=\Ann_R H^j_\fm(R)/0^F_{H^j_\fm(R)}$ is $\fm$-primary if and only if $JS = \Ann_S H^j_\fn(S)/0^F_{H^j_\fn(S)}$ is $\fn$-primary, so we have $\gfdp R=\gfdp S$.
\end{proof}

One of the most useful and fundamental features of depth, natural inequalities along short exact sequences, is immediately verifiable for (generalized) $F$-depth as well. This theorem will serve as a fundamental tool in the analysis of nilpotent singularity types that follows.

\begin{thm}\label{thm:Fdepthses}
Suppose the following is a short exact sequence of finitely generated $R[F]$-modules. \begin{center}
\begin{tikzcd}
0 \arrow{r} & A \arrow{r} & B \arrow{r} & C \arrow{r} & 0
\end{tikzcd}
\end{center} We then have the following $F$-depth bounds. \begin{enumerate}
\item $\fdp B \ge \min\{\fdp A,\fdp C \},$
\item $\fdp A \ge \min\{\fdp B, \fdp C +1\},$ and
\item $\fdp C \ge \min\{\fdp B, \fdp A -1\}.$
\end{enumerate} If the sequence is split, then $\fdp B =\min\{\fdp A,\fdp C\}$. Finally, the same inequalities and equality hold replacing $\fdp$ with $\gfdp$.
\end{thm}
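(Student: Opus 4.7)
The plan is to apply the long exact sequence of local cohomology, interpreted in the category of $R[F]$-modules. Because the maps in $0 \to A \to B \to C \to 0$ commute with Frobenius, so do all maps in the induced long exact sequence,
\[
\cdots \to H^{j-1}_\fm(C) \xra{\delta} H^j_\fm(A) \xra{f} H^j_\fm(B) \xra{g} H^j_\fm(C) \xra{\delta} H^{j+1}_\fm(A) \to \cdots,
\]
as flagged in the preliminaries. Each term is an artinian $R[F]$-module, so the Hartshorne-Speiser-Lyubeznik theorem (Theorem~\ref{thm:HSL thm}) gives a finite HSL number for each module and hence for each of its $R[F]$-subquotients.

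For the three depth bounds, I would extract a short exact sequence of $R[F]$-modules from this long sequence and invoke Lemma~\ref{lem:NilSES}. For (1), suppose $j < \min\{\fdp A, \fdp C\}$: then $\im(f)$ is a quotient of the nilpotent $H^j_\fm(A)$ and $\im(g)$ is a submodule of the nilpotent $H^j_\fm(C)$, so Lemma~\ref{lem:NilSES} applied to $0 \to \im(f) \to H^j_\fm(B) \to \im(g) \to 0$ yields nilpotence of $H^j_\fm(B)$, proving $\fdp B \ge \min\{\fdp A, \fdp C\}$. For (2), with $j < \min\{\fdp B, \fdp C+1\}$, I would use $0 \to \im(\delta) \to H^j_\fm(A) \to \im(f) \to 0$, where $\im(\delta)$ is a quotient of the nilpotent $H^{j-1}_\fm(C)$ and $\im(f)$ is a submodule of the nilpotent $H^j_\fm(B)$. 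For (3), with $j < \min\{\fdp B, \fdp A-1\}$, I would use $0 \to \im(g) \to H^j_\fm(C) \to \im(\delta) \to 0$, now with $\im(\delta)$ a submodule of the nilpotent $H^{j+1}_\fm(A)$.

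The generalized $F$-depth inequalities follow identically after replacing \emph{nilpotent} by \emph{generalized nilpotent} and using the second clause of Lemma~\ref{lem:NilSES}; the finite-HSL hypothesis on the three terms of each short exact sequence is automatic from Theorem~\ref{thm:HSL thm}. The one small point to verify is that generalized nilpotence is closed under $R[F]$-submodules: for $N \subset M$ one checks $0^F_N = N \cap 0^F_M$, whence $N/0^F_N$ embeds into the finite-length $M/0^F_M$, while closure under quotients is immediate. In the split case, $H^j_\fm$ is additive, so $B \simeq A \oplus C$ gives $H^j_\fm(B) \simeq H^j_\fm(A) \oplus H^j_\fm(C)$ with componentwise Frobenius action, and (generalized) nilpotence of a direct sum is equivalent to (generalized) nilpotence of each summand. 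The principal obstacle I anticipate is purely bookkeeping: ensuring at each step that the extracted short exact sequences really do live in the $R[F]$-category, i.e., that the connecting maps $\delta$ commute with the canonical Frobenius actions on local cohomology.
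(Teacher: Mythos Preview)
Your proposal is correct and follows essentially the same approach as the paper: both arguments run the long exact sequence in local cohomology as a sequence of $R[F]$-modules and reduce to Lemma~\ref{lem:NilSES} on suitable three-term pieces, with Theorem~\ref{thm:HSL thm} supplying the finite HSL numbers needed in the generalized case. Your presentation is slightly cleaner in that you extract the explicit short exact sequences $0\to\im\to H^j_\fm(\,\cdot\,)\to\im\to 0$ and apply Lemma~\ref{lem:NilSES} directly, whereas the paper argues (2) by first pushing $H^j_\fm(A)$ into $\ker(\alpha)=\im(\delta)$ via an HSL iterate and then invoking nilpotence of $\im(\delta)$; the content is the same, and your extra remark that generalized nilpotence passes to $R[F]$-submodules via $0^\rho_N=N\cap 0^\rho_M$ is a useful point the paper leaves implicit.
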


\begin{proof}
We will prove (2), the others are similar. Suppose $t < \min\{ \fdp B,\fdp C +1\}$ and we will show $H^j_\fm(A)$ is nilpotent for $j \le t$. Since we have the long exact sequence of $R[F]$-modules \begin{center}
\begin{tikzcd}
\cdots \arrow{r} & H^{j-1}_\fm(C) \arrow{r}{\delta} & H^j_\fm(A) \arrow{r}{\alpha} & H^j_\fm(B) \arrow{r} & \cdots
\end{tikzcd}
\end{center} we know that the image of $H^j_\fm(A)$ is nilpotent as $j<\fdp B$ so since $\hsl H^j_\fm(B)=e<\infty$, $\rho^e_{H^j_\fm(A)}(H^j_\fm(A))$ is inside $\ker(\alpha)=\im(\delta)$. But, $\im(\delta)$ is a subquotient of $H^{j-1}_\fm(C)$ and $j-1<\fdp C$ and is thus nilpotent by Lemma~\ref{lem:NilSES}. Consequently, $\rho^e_{H^j_\fm(A)}(H^j_\fm(A))$ is nilpotent which implies $H^j_\fm(A)$ is also nilpotent, as required. 

In the case that the sequence is split, we have $H^j_\fm(B) = H^j_\fm(A)\oplus H^j_\fm(C)$ and consequently, $H^j_\fm(B)$ is nilpotent if and only if both $H^j_\fm(A)$ and $H^j_\fm(C)$ are by Lemma~\ref{lem:NilSES}. This shows the required $F$-depth bound. 

To see the final claim, repeat the above proof with the generalized nilpotent part of Lemma~\ref{lem:NilSES}, noting that the modules in question have finite Hartshorne-Speiser-Lyubeznik numbers as local cohomology modules of finitely generated modules are artinian. 
\end{proof}

\section{Gluing Nilpotent Singularities}\label{sec:glue}

We recall now a way to construct local rings with prescribed properties. A property $\cP$ of local rings is said to \textbf{glue} if, given ideals $\fa_1$ and $\fa_2$ of $R$ with $\fb=\fa_1+\fa_2$, whenever $R/\fa_1$, $R/\fa_2$, and $R/\fb$ satisfy $\cP$, then so does $R/\fa_1\cap\fa_2$. In geometric language, this asks if $Y_1$ and $Y_2$ are subschemes of a $X$ so that each local ring of $Y_1$, $Y_2$, and $Y_1\cap Y_2$ have $\cP$, then each local ring of $X$ has $\cP$. Several $F$-singularity types are known to glue, a partial list is given below. 

\begin{itemize}
\item Schwede showed in \cite[Prop.~4.8]{Sch09} that Cohen-Macaulay $F$-injective singularities glue.
\item Quy and Shimomoto showed in \cite[Thm.~5.7]{QS17} that \textit{stably $FH$-finite} singularities glue.
\item Dao-De Stefani-Ma showed in \cite[Prop.~2.8]{DDSM} that a characteristic-agnostic singularity type called \textit{cohomologically full} singularities glue, which specializes to the \textit{$F$-full} singularities of Ma-Quy, \cite{MQ18} in prime characteristic.
\end{itemize} This makes the gluing question a natural one to ask when attempting to understand the geometry of a new $F$-singularity type and a useful tool for constructing examples. We investigate how nilpotent singularity types glue.

\begin{run} For the remainder of this section, we need to establish several conditions on the ideals $\fa_1,\fa_2$, and $\fb=\fa_1+\fa_2$, which we collect here. Fix $(R,\mfm)$ a local ring of dimension $d\ge 2$ (to avoid trivialities), and let $\fa_1$ and $\fa_2$ be ideals of $R$. We may freely work modulo $\fa_1\cap\fa_2$ to assume $\fa_1\cap\fa_2=0$ in $R$.  We will also specialize to the case that $d=\dim R/\fa_1=\dim R/\fa_2$ as in \cite[Thm.~5.7]{Sch09}, although a finer analysis is possible.\end{run}
\subsection{Gluing nilpotent singularity types} We come to our first main theorem constructing nilpotent like singularities which we phrase as a bound on $F$-depth under gluing.

\begin{thm}
We have the following lower bound on the $F$-depth of $R$, and the same lower bound holds replacing $\fdp$ with $\gfdp$ throughout. \[
\fdp R \ge \min \{\fdp R/\fa_1 ,\fdp R/\fa_2,\fdp R/\fb+1\}
\]
\end{thm}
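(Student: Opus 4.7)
The plan is to leverage a Mayer--Vietoris short exact sequence of $R[F]$-modules and then apply Theorem~\ref{thm:Fdepthses} directly. Because $\fa_1 \cap \fa_2 = 0$, I expect the sequence
\[
0 \to R \xrightarrow{\Delta} R/\fa_1 \oplus R/\fa_2 \xrightarrow{\sigma} R/\fb \to 0,
\]
to be exact, where $\Delta(r)=(r+\fa_1, r+\fa_2)$ is the diagonal and $\sigma(x+\fa_1, y+\fa_2) = (x-y)+\fb$ is the ``difference'' map. Exactness as $R$-modules is a routine diagram chase: $\Delta$ is injective because $\fa_1 \cap \fa_2 = 0$; $\sigma \circ \Delta = 0$ because $r - r = 0$; and if $\sigma(x+\fa_1, y+\fa_2) = 0$, then $x - y = a_1 + a_2 \in \fb$ with $a_i \in \fa_i$, whence $(x+\fa_1, y+\fa_2) = \Delta(x - a_1) = \Delta(y + a_2)$.

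The crucial (and only nontrivial) step is to verify that $\Delta$ and $\sigma$ commute with the Frobenius actions, i.e., that this is a short exact sequence in the category of $R[F]$-modules. For $\Delta$ this is immediate from $(r^p + \fa_i) = F(r + \fa_i)$. For $\sigma$ we need
\[
(x-y)^p + \fb \;=\; (x^p - y^p) + \fb,
\]
which is an identity in characteristic $p$ (with the case $p=2$ requiring the trivial observation that $-y \equiv y$). Once this is established, the rest is purely formal.

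Having the sequence as $R[F]$-modules, I first note that the middle term splits as $R/\fa_1 \oplus R/\fa_2$ in the category of $R[F]$-modules, so the final (split) assertion of Theorem~\ref{thm:Fdepthses} gives
\[
\fdp(R/\fa_1 \oplus R/\fa_2) = \min\{\fdp R/\fa_1,\; \fdp R/\fa_2\}.
\]
Then applying part (2) of Theorem~\ref{thm:Fdepthses} to the Mayer--Vietoris sequence yields
\[
\fdp R \;\geq\; \min\bigl\{\fdp(R/\fa_1 \oplus R/\fa_2),\; \fdp(R/\fb) + 1\bigr\} \;=\; \min\{\fdp R/\fa_1,\; \fdp R/\fa_2,\; \fdp R/\fb + 1\}.
\]
Since the final sentence of Theorem~\ref{thm:Fdepthses} provides the identical inequalities for $\gfdp$ in place of $\fdp$ (all four modules $R$, $R/\fa_1$, $R/\fa_2$, $R/\fb$ being finitely generated, so their local cohomologies have finite HSL numbers), the same argument applied verbatim gives the generalized $F$-depth bound.

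I do not expect any serious obstacle; the only subtlety is confirming compatibility with Frobenius of the difference map $\sigma$, which reduces to the Frobenius identity $(x-y)^p = x^p - y^p$ in characteristic $p$. Everything else is an off-the-shelf application of the $F$-depth long exact sequence machinery already developed in Section~3.
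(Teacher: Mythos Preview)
Your proposal is correct and follows essentially the same approach as the paper: both set up the Mayer--Vietoris short exact sequence $0\to R\to R/\fa_1\oplus R/\fa_2\to R/\fb\to 0$, verify it is a sequence of $R[F]$-modules, and then invoke Theorem~\ref{thm:Fdepthses}. The paper's proof is terser but otherwise identical; the only point it makes explicit that you leave implicit is the observation that $\fdp R/\fa_i$ computed as an $R$-module agrees with $\fdp R/\fa_i$ computed intrinsically, since $\fm R/\fa_i$ is the maximal ideal of $R/\fa_i$.
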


\begin{proof}
We may analyze the $F$-depth of $R$ in terms of $R/\fa_i$ and $R/\fb$ using the natural short exact sequence below, which we augment with the canonical Frobenius map in each place. \begin{center}
\begin{tikzcd}
0 \arrow{r} & R\arrow{d}{F} \arrow{r}{\alpha} & R/\fa_1\oplus R/\fa_2 \arrow{d}{F\oplus F}\arrow{r}{\beta} & R/\fb\arrow{d}{F} \arrow{r} & 0 \\
0 \arrow{r} & R \arrow{r}{\alpha} & R/\fa_1\oplus R/\fa_2 \arrow{r}{\beta} & R/\fb \arrow{r} & 0 
\end{tikzcd}
\end{center} The maps $\alpha(r)=(r+\fa_1,r+\fa_2)$ and $\beta(r+\fa_1,s+\fa_2)=r-s+\fb$ commute with the Frobenius actions given in each place, which means we can view the top row in the diagram above as a short exact sequence of $R[F]$-modules as in Lemma~\ref{thm:Fdepthses}. Notably, the (generalized) $F$-depth of $R/\fa_i$ as an $R$-module is the same as its (generalized) $F$-depth as an $R/\fa_i$-module, since $\mfm R/\fa_i$ is the maximal ideal of $R/\fa_i$. By applying the lemma, we get the required lower bounds. 
\end{proof}

\noindent As an immediate corollary of this lemma we can glue (generalized) weak $F$-nilpotence. 

\begin{cor}\label{thm:GlueWeakFNil}
Suppose $\dim R/\fb\ge d-1$. If $R/\fa_1,R/\fa_2$, and $R/\fb$ are weakly $F$-nilpotent, so is $R$. Further, if $R$ is equidimensional and $R/\fa_1$, $R/\fa_2$, and $R/\fb$ are generalized weakly $F$-nilpotent, so is $R$.
\end{cor}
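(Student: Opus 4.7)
The plan is essentially to read off the corollary from the preceding theorem, keeping careful track of dimensions. The key observation is that a local ring $(R,\fm)$ of dimension $d$ is weakly $F$-nilpotent if and only if $\fdp R = d$, and similarly generalized weakly $F$-nilpotent if and only if $\gfdp R = d$ (the ``$\ge$" direction is immediate from definitions, while ``$\le$" for $\gfdp$ uses Lemma~\ref{lem:Fdepthbasic}(c), which is where the equidimensionality hypothesis enters).

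First I would verify that $\dim R = d$. Since $\fa_1 \cap \fa_2 = 0$, every minimal prime $\fp$ of $R$ contains $\fa_1 \cap \fa_2$, hence contains either $\fa_1$ or $\fa_2$, and thus corresponds to a minimal prime of $R/\fa_1$ or $R/\fa_2$. Therefore
\[
\dim R \;=\; \max\{\dim R/\fa_1,\ \dim R/\fa_2\} \;=\; d,
\]
and in particular $\fdp R \le d$ by Lemma~\ref{lem:Fdepthbasic}(a).

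For the weakly $F$-nilpotent case, I would directly plug into the preceding theorem. By hypothesis $\fdp R/\fa_1 = \fdp R/\fa_2 = d$, and since $R/\fb$ is weakly $F$-nilpotent with $\dim R/\fb \ge d-1$ we have $\fdp R/\fb \ge d-1$, so $\fdp R/\fb + 1 \ge d$. The theorem then yields
\[
\fdp R \;\ge\; \min\bigl\{\fdp R/\fa_1,\ \fdp R/\fa_2,\ \fdp R/\fb + 1\bigr\} \;\ge\; d,
\]
which combined with $\fdp R \le d$ gives $\fdp R = d$, i.e.\ $R$ is weakly $F$-nilpotent.

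For the generalized version, the argument is identical with $\gfdp$ in place of $\fdp$: generalized weak $F$-nilpotence of $R/\fa_i$ and of $R/\fb$ gives $\gfdp R/\fa_i \ge d$ and $\gfdp R/\fb \ge \dim R/\fb \ge d-1$, and the same theorem yields $\gfdp R \ge d$. The equidimensionality assumption on $R$ is then used precisely to invoke Lemma~\ref{lem:Fdepthbasic}(c), which provides the matching upper bound $\gfdp R \le \dim R = d$. I don't anticipate a genuine obstacle: the work is really bookkeeping, and the only point worth highlighting is that the equidimensionality of $R$ (not of the quotients) is needed for the upper bound on $\gfdp R$, whereas the lower bounds on $\gfdp R/\fa_i$ and $\gfdp R/\fb$ require only the definition of generalized weak $F$-nilpotence.
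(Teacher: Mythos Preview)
Your proposal is correct and follows essentially the same approach as the paper: apply the preceding theorem to get $\fdp R \ge d$ (resp.\ $\gfdp R \ge d$), and combine with the upper bound $\fdp R \le d$ from Lemma~\ref{lem:Fdepthbasic}(a) (resp.\ $\gfdp R \le d$ from Lemma~\ref{lem:Fdepthbasic}(c), using equidimensionality). The paper's proof is a two-line sketch; you have simply filled in the bookkeeping, and your explicit identification of where equidimensionality enters is a helpful clarification. Note that your verification that $\dim R = d$ is redundant under the running assumptions, where $d$ is defined to be $\dim R$.
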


\begin{proof}
The conditions provided and the proceeding lemma imply the (generalized) $F$-depth of $R$ is at least $d$, but the (generalized) $F$-depth of $R$ is at most $d$ as well, completing the claim.
\end{proof}

\begin{rmk}\label{rmk:cannot glue wFn with low-dimensional intersection}
We note that, up to the assumptions on $R/\fa_1$ and $R/\fa_2$, the conditions in \ref{thm:GlueWeakFNil} on $R/\fb$ are sharp. In particular, if we write $t=\fdp R/\fb$ and we have \[t+1<\min\{\fdp R/\fa_1 ,\fdp R/\fa_2\}=s,\] then $\fdp R=t+1$. To see this, let $A_j = H^j_\mfm(R/\fa_1)\oplus H^j_\mfm(R/\fa_2)$. By Lemma~\ref{thm:Fdepthses}, $A_j$ is nilpotent for all $j\le t+1$. Thus, we get an exact sequence of $R[F]$-modules \begin{center}
\begin{tikzcd}
A_t \arrow{r} & H^t_\mfm(R/\fb) \arrow{r}{\delta} & H^{t+1}_\mfm(R) \arrow{r} & A_{t+1}
\end{tikzcd}
\end{center} where Lemma~\ref{lem:NilSES} assures that $\im(\delta)\subset H^{t+1}_\mfm(R)$ is not nilpotent. Thus, $\fdp R\le t+1$, which forces $\fdp R=t+1$ since $\fdp R\ge \min\{s,t+1\}=t+1$ by Lemma~\ref{thm:Fdepthses}. A similar obstruction occurs in the generalized case.
\end{rmk}

We are grateful to A. Simpson for discussions about gluing and irreducibility which contributed greatly to the remark below.

\begin{rmk}
Given Corollary~\ref{thm:GlueWeakFNil}, it seems natural to ask in the same setting whether $F$-nilpotent singularities glue as well. In upcoming work of the first author with H. Dao and V. Pandey, it will be shown that if $(R,\mfm)$ is an $F$-finite, $F$-nilpotent local ring, then $R$ has a unique minimal prime, so that no non-trivial gluing can occur since $\Spec(R)$ is irreducible. For the same reason, any $F$-singularity type which forces the ring to be a domain will not glue, e.g. $F$-rational or weakly $F$-regular. This remark replaces a vacuous gluing statement for $F$-nilpotence from a prior version of this work.
\end{rmk}

\subsection{Examples}

We now demonstrate Corollary~\ref{thm:GlueWeakFNil} with some explicit examples.

\begin{xmp}\label{xmp:gluewFn}
Let $S = k[x,y,z,w]$, $\fa_1 = (xw-yz,yw^2-z^3,xz^2-y^2w,x^2z-y^3)$ and $\fa_2 = (x,z)$. We now check the conditions of Corollary~\ref{thm:GlueWeakFNil}, suppressing the process of localizing at the origin. 

\begin{itemize}
\item The ring $S/\fa_1$ is isomorphic to $k[s^4,s^3t,st^3,t^4]$, a two-dimensional domain. The first local cohomology module of this ring is well-known to be nilpotent under Frobenius, see \cite[Example~2.5.1]{MQ18}, so $S/\fa_1$ is weakly $F$-nilpotent.
\item The ring $S/\fa_2$ is a two-dimensional Cohen-Macaulay ring, so is weakly $F$-nilpotent.
\item Finally, it is easily seen that $\fb = (x,z,yw^2,y^2w,y^3)$ and $S/\fb \cong k[y,w]/(yw^2,y^2w,y^3)$ is a one-dimensional ring, so is weakly $F$-nilpotent. 
\end{itemize}
Thus, the two-dimensional ring: \[R = S/(\fa_1 \cap \fa_2) = S/(yz-xw,z^4-xw^3,xz^3-xyw^2,x^2z^2-xy^2w,xy^3-x^3z)\] is also weakly $F$-nilpotent after localizing at $(x,y,z,w)R$. 
\end{xmp}

By adjusting the ideal $\fa_2$ in the example above, large families of examples can be easily generated. We now turn to the generalized nilpotent case. For completeness, we also provide an explicit example of (non-trivial) gluing of generalized weak $F$-nilpotence.

\begin{xmp} 
Let $S = k[a,b,c,d,e,f]$, and we will again suppress the process of localizing at the origin.
\begin{itemize}
\item Let $\fa_1$ be the ideal defining the natural map from $S$ to the Segre product $k[x,y,z]/(x^4+y^4-z^4) \# k[u,v]$; Example~\ref{xmp:Kunneth} will show that the three dimensional ring $S/\fa_1$ is generalized weakly $F$-nilpotent (but not weakly $F$-nilpotent). 

\item Let $\fa_2 = (cd-ae,c^2-bd,b^2-ac)$, and it is immediate to verify that $S/\fa_2$ is a three-dimensional Cohen-Macaulay ring. In particular, $S/\fa_2$ is also generalized weakly $F$-nilpotent. 

\item For a particular choice of generators for $\fa_1$, we get \[\fb =  (ce-bf, ae-af,cd-af,bd-af,c^2-af,b^2-ac,d^4+e^4-f^4, ad^3+be^3-cf^3, a^2d^2,a^3d,a^4)\] and $S/\fb$ is a two dimensional ring. It is easy to check that $S/\sqrt{b}=S/(c,b,a,d^4+e^4-f^4)$ is Cohen-Macaulay, so by Lemma~\ref{lem:Fdepthbasic}, $S/\fb$ is weakly $F$-nilpotent. 
\end{itemize}
Since $R=S/(\fa_1\cap \fa_2)$ a three-dimensional ring, we see $R$ is generalized weakly $F$-nilpotent after localizing at $(a,b,c,d,e,f)R$ by Corollary~\ref{thm:GlueWeakFNil}. 
\end{xmp}

\subsection{Frobenius test exponents for gluing}

We can directly apply Remark~\ref{rmk:finiteness theorems} to bound the Frobenius test exponent of the glued ring produced in Corollary~\ref{thm:GlueWeakFNil}. We use the running assumptions in Section~\ref{sec:glue}, and for convenience, we will also write $h_j=\max\{\hsl H^j_\mfm(R/\fa_1),\hsl H^j_\mfm(R/\fa_2) \}$.

\begin{thm}\label{thm:HSLGluing}
For $j\le \fdp R/\fb$, we have: \[\hsl H^j_\mfm(R) \le \hsl H^{j-1}_\mfm(R/\fb) + h_j.\] In particular, if $\dim R/\fb =d$ and $R/\fa_i$ and $R/\fb$ are weakly $F$-nilpotent, this bound holds for all $j$. If $\dim R/\fb =d-1$, then the bound holds for $0 \le j < d$. This implies the following Frobenius test exponent bounds when $R/\fb$ and $R/\fa_i$ are weakly $F$-nilpotent: \begin{itemize}
\item $\displaystyle \fte R \le \sum_{i=1}^{d} \binom{d}{j} \hsl H^{j-1}_\mfm(R/\fb)  + \sum_{i=0}^d \binom{d}{j}h_j$ if $\dim R/\fb =d$.
\item $\displaystyle \fte R \le \hsl H^d_\mfm(R)+ \sum_{i=1}^{d-1} \binom{d}{j} \hsl H^{j-1}_\mfm(R/\fb) + \sum_{i=0}^{d-1} \binom{d}{j}h_j$ if $\dim R/\fb =d-1$.
\end{itemize}
\end{thm}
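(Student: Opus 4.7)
My plan is to recycle the Mayer--Vietoris short exact sequence of $R[F]$-modules
\[0 \longrightarrow R \longrightarrow R/\fa_1 \oplus R/\fa_2 \longrightarrow R/\fb \longrightarrow 0\]
constructed in the proof of the preceding theorem and feed its long exact sequence of local cohomology into Lemma~\ref{lem:lift of nilp is nilp hsl bounds}. Since local cohomology preserves Frobenius compatibility, applying $H^\bullet_\mfm$ produces for each $j$ an exact three-term piece of $R[F]$-modules
\[H^{j-1}_\mfm(R/\fb) \xrightarrow{\delta} H^j_\mfm(R) \longrightarrow H^j_\mfm(R/\fa_1) \oplus H^j_\mfm(R/\fa_2),\]
to which I will apply Lemma~\ref{lem:lift of nilp is nilp hsl bounds} with $A$, $B$, $C$ the three displayed modules.

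The nilpotence hypothesis on $\ker(\delta)$ holds as soon as $j \leq \fdp R/\fb$: indeed $\ker(\delta)$ is a submodule of $A = H^{j-1}_\mfm(R/\fb)$, which is nilpotent (or zero, when $j = 0$) by definition of $\fdp$. The conclusion of the lemma then reads
\[\hsl H^j_\mfm(R) \leq \hsl H^{j-1}_\mfm(R/\fb) + \hsl\bigl(H^j_\mfm(R/\fa_1) \oplus H^j_\mfm(R/\fa_2)\bigr),\]
and since the direct sum decomposition is a split short exact sequence of $R[F]$-modules, the split-exact clause of Lemma~\ref{lem:lift of nilp is nilp hsl bounds} rewrites the second summand as $\max\{\hsl H^j_\mfm(R/\fa_1), \hsl H^j_\mfm(R/\fa_2)\} = h_j$. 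This gives the advertised HSL bound. Under the weak $F$-nilpotence hypotheses, $\fdp R/\fb = \dim R/\fb$, so the bound is valid for $j \in \{0,\ldots,d\}$ when $\dim R/\fb = d$ and only for $j \in \{0,\ldots,d-1\}$ when $\dim R/\fb = d-1$.

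For the Frobenius test exponent bounds, $R$ itself is weakly $F$-nilpotent by Corollary~\ref{thm:GlueWeakFNil}, so Quy's estimate from Remark~\ref{rmk:finiteness theorems} gives $\fte R \leq \sum_{j=0}^d \binom{d}{j} \hsl H^j_\mfm(R)$. Substituting the HSL bound in every slot in the case $\dim R/\fb = d$ yields the first displayed inequality, noting that the $j = 0$ contribution of $\hsl H^{-1}_\mfm(R/\fb)$ vanishes since $H^{-1}_\mfm = 0$. In the case $\dim R/\fb = d-1$ the substitution runs only for $j \leq d-1$ while the $j = d$ term $\hsl H^d_\mfm(R)$ is retained unchanged, producing the second displayed inequality. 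The only real obstacle here is careful bookkeeping at the boundary indices $j=0$ and $j=d$; once the Mayer--Vietoris long exact sequence is recognized as a sequence of $R[F]$-modules, the remainder is a direct application of the HSL bound from Lemma~\ref{lem:lift of nilp is nilp hsl bounds} together with Quy's formula.
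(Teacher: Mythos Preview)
Your proof is correct and follows essentially the same approach as the paper: apply Lemma~\ref{lem:lift of nilp is nilp hsl bounds} to the three-term exact piece of the long exact sequence arising from the Mayer--Vietoris sequence, then invoke Corollary~\ref{thm:GlueWeakFNil} and Quy's bound for the Frobenius test exponent estimates. Your write-up is in fact more detailed than the paper's, which simply cites the relevant lemmas without spelling out the verification that $\ker(\delta)$ is nilpotent or the split-exact computation of $\hsl$ for the direct sum.
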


\begin{proof}
We may apply Lemmas~\ref{lem:lift of nilp is nilp hsl bounds}  and \ref{lem:lift of nilp is nilp hsl bounds} to see the first set of bounds. The Frobenius test exponent bounds then follow from Quy's upper bound and Corollary~\ref{thm:GlueWeakFNil}. 
\end{proof}

We note that in each case of the theorem above, the summands in the upper bound are involved in computing Quy's upper bound on $R/\fa_i$ and $R/\fb$, but if $\dim R/\fb =d-1$, we need to include $\hsl H^d_\mfm(R)$ due to the possible failure of the condition in Lemma~\ref{lem:lift of nilp is nilp hsl bounds}. A similar theorem giving an upper bound on $\fte R$ if $R/\fb$ and $R/\fa_i$ are generalized weakly $F$-nilpotent is possible, but its increased technicality which makes it of less interest for the following reason. Further, without the additional (and computationally ineffective) assumption that $\im(\delta:H^{j-1}_\mfm(R/\fb)\rightarrow H^j_\mfm(R))$ is nilpotent for $j>\fdp R/\fb$, the Hartshorne-Speiser-Lyubeznik bounds in Theorem~\ref{thm:HSLGluing} will not apply.

\section{Modules with Frobenius actions: the graded case}\label{sec:graded}

We now turn to the setting when the Frobenius map on a graded ring induces a nilpotent Frobenius action on its graded local cohomology modules. To be clear, by a \textbf{graded ring} $R$, we mean a noetherian $\NN$-graded ring with degree zero part $[R]_0$ a field, and a \textbf{standard graded} ring is an graded ring where $R$ is generated as an $[R]_0$-algebra by $[R]_1$. For such a ring, we write $\fm=\fm_R=\oplus_{i>0} [R]_i$ for its unique homogeneous maximal ideal. The reader is referred to \cite{GW78a} for the construction of graded local cohomology modules and other details about graded rings. We also recommend \cite[Section~2]{Sin02} for a detailed review of the nearly all of the necessary graded constructions we use.

\begin{run}For the remainder of Section~\ref{sec:graded}, let $R$ be a graded ring with homogeneous maximal ideal $\fm$, where $[R]_0=k$ is a field of prime characteristic $p>0$. We write $d=\dim R$ and we will assume $d>0$. For a homogeneous ideal $J\subset R$, we write $H^j_J(\bullet)$ for the graded local cohomology supported in $J$. Finally, we let $M$ be a $\ZZ$-graded $R$-module.
\end{run}

Notice in a graded ring, tight and Frobenius closure of homogeneous ideals are homogeneous. Further, note that known results about finiteness of Frobenius test exponents will still apply to the graded setting as long as we restrict to homogeneous parameter ideals, which can be generated by homogeneous filter regular elements by homogeneous prime avoidance.

\begin{dff}\label{dff:homogeneous fte}
If $R$ is a graded ring, then the {\bf homogeneous Frobenius test exponent of $R$}, written $\fte^* R$, is defined as \[
\fte^* R=\sup\{\fte \fp \mid \fq \subset R \text{ a homogeneous parameter ideal}\}\in\NN\cup\{\infty\}.\]
\end{dff}

\begin{rmk}\label{rmk:homogeneous fte works like fte}
A key technique in studying the finiteness of Frobenius test exponents is the use of filter regular sequences and the Nagel-Schenzel isomorphism, see \cite[Rmk. 2.5]{Quy19}. A graded version of the Nagel-Schenzel isomorphism for graded local cohomology modules is also readily available, for example, by following the usual graded adjustments to the proof provided by Huong in \cite{Huo17}. 
\end{rmk}

We also have need to study and manipulate the set of degrees for which a graded module is nonzero. 

\begin{dff}
Define the \textbf{degree support of $M$} to be the set $\degsup M=\{n\in \ZZ \mid [M]_n\neq 0\}$. If $M'$ is another $\ZZ$-graded $R$-module, then we say \textbf{$M$ and $M'$ have complementary degree supports} if $\degsup M\cap \degsup M'=\varnothing$. If we are considering multiple gradings $g_1,g_2$ on the same module we may write $\degsup^{g_1} M$ and $\degsup^{g_2} M$ to distinguish them.
\end{dff}

Notice that the degree support of $M$ only depends on the underlying direct sum decomposition as abelian groups given by the grading $M=\bigoplus [M]_n$.

\subsection{Graded Frobenius actions}

We aim to study the Frobenius action on the graded local cohomology modules of $R$, which are naturally $\ZZ$-graded. To accomplish this, we need several basic results about the correct notion of Frobenius actions on graded $R$-modules. First, note the Frobenius map $F \colon R\rightarrow R$ enjoys the property that $F([R]_n)\subset [R]_{np}$, and so we require this of a Frobenius action on a graded $R$-module. 

\begin{dff}
We say that $(M,\rho)$ is a \textbf{graded $R[F]$-module} or has a \textbf{graded Frobenius action} if $\rho \colon M\rightarrow M$ is a $p$-linear map with the property that $\rho([M]_n)\subset [M]_{np}$ for all $n \in \ZZ$. 
\end{dff}

The definitions in Definition~\ref{dff:R[F]-module basics} apply equally well to graded $R[F]$-modules, where we require $\rho$-stable submodules to be graded submodules. 

\begin{xmp}
For any graded $R[F]$-module $(M,\rho)$, note that $[M]_0$ is a graded $\rho$-stable submodule of $M$ as $\rho([M]_0)\subset [M]_{p\cdot 0}=[M]_0$. Furthermore, if $J$ is a homogeneous ideal of $R$, the graded local cohomology modules $H^j_J(M)$ are naturally graded $R[F]$-modules, where the Frobenius action $\rho \colon H^j_J(M)\rightarrow H^j_J(M)$ is the map induced by $\rho:M\rightarrow M$. All mentions of graded $R[F]$-structures on the graded local cohomology of a graded $R[F]$-module will be this natural one.
\end{xmp}

\begin{dff}\label{dff:gradedNilpotent}
Let $(M,\rho)$ be a graded $R[F]$-module. For $n \in \ZZ$, call $M$ {\bf nilpotent in degree $n$} provided $[M]_n = \cup_e \ker(\rho^e:[M]_n \rightarrow [M]_{np^e}).$
\end{dff}

\noindent It is trivial to see that $M$ is nilpotent if and only if $M$ is nilpotent in degree $n$ for all $n \in \ZZ$. In particular, $0^\rho_M$ is a graded $\rho$-stable submodule of $M$ since \[
0^\rho_M = \bigoplus_{n\in\ZZ} \left(\bigcup_{e \ge 0} \ker(\rho^e \colon [M]_n \rightarrow [M]_{np^e})\right).
\] Due to this, we can study nilpotence in degree $n$ by studying the degree support of the graded $R[F]$-module quotient $M/0^\rho_M$, which gives rise to the following definition.

\begin{dff}\label{dff: nilsupp}
Let $M$ be a graded $R[F]$-module. Define the \textbf{nil-support of $M$} to be the set of $$\nilsup M = \degsup M/0^\rho_M = \{ n \in \ZZ \colon M \text{ is not nilpotent in degree } n\}.$$ If $M'$ is another graded $R[F]$-module, then we say \textbf{$M$ and $M'$ have complementary nil-supports} if $\nilsup M \cap \nilsup M' = \varnothing$. If we need to be clear about which grading $g$ or action $\rho$ we are working with we will write $\nilsup^g_\rho M = \degsup^g M/0^\rho_M$.
\end{dff}

We have $\degsup N \cup \degsup M/N \subset \degsup M$ for all graded $R$-submodules $N\subset M$. In particular, $\nilsup M \subset \degsup M$.

\begin{lem}\label{lem:graded nilp and gen nilp} \label{lem:trichotomy}
Let $M$ be a graded $R[F]$-module such that $\dim_k [M]_n$ is finite for all $n\in\ZZ$. The following trichotomy holds for $\nilsup M$. \begin{enumerate}[label=(\arabic*)]
\item $\nilsup M = \varnothing$ i.e., $M$ is nilpotent. 
\item $\nilsup M = \{0\}$, equivalently that $M$ is generalized nilpotent but not nilpotent.
\item $\nilsup M$ is an infinite subset of $\ZZ$.
\end{enumerate}
\end{lem}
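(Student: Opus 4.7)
The plan is to reduce the trichotomy to a single structural observation: the nil-support is closed under multiplication by $p$, whence any nonzero element of $\nilsup M$ forces $\nilsup M$ to be infinite.

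First I would verify the easy equivalences. By definition, $\nilsup M = \varnothing$ means $M/0^\rho_M = 0$, i.e., $M$ is nilpotent, giving case (1). For case (2), I would observe that since each $[M]_n$ is finite-dimensional over $k$, so is each $[M/0^\rho_M]_n$; consequently $M/0^\rho_M$ has finite length as an $R$-module if and only if $\nilsup M$ is a finite set (a finite-dimensional $k$-vector space is automatically of finite length as an $R$-module via its $\fm$-adic filtration, while conversely a finite length graded module has bounded degree support). So $M$ is generalized nilpotent but not nilpotent if and only if $\nilsup M$ is a nonempty finite set.

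The key step is then to show that $\nilsup M$ is stable under the map $n \mapsto np$. Because $\rho$ raises degrees by a factor of $p$, it descends to a $p$-linear map $\bar\rho\colon M/0^\rho_M \to M/0^\rho_M$ with $\bar\rho([M/0^\rho_M]_n) \subset [M/0^\rho_M]_{np}$. I claim $\bar\rho$ is injective on each graded piece: if $x \in [M]_n$ satisfies $\rho(x) \in 0^\rho_M$, then $\rho^{e+1}(x) = 0$ for some $e$, so $x \in 0^\rho_M$. Thus if $n \in \nilsup M$, we can pick a nonzero class in $[M/0^\rho_M]_n$, apply $\bar\rho$, and land in a nonzero element of $[M/0^\rho_M]_{np}$, so $np \in \nilsup M$.

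Combining these, if $n \in \nilsup M$ with $n \neq 0$, then $\{n, np, np^2, \ldots\} \subset \nilsup M$ is infinite since $p \geq 2$ and $n \neq 0$. Therefore either $\nilsup M \subset \{0\}$ (cases (1) and (2)) or $\nilsup M$ is infinite (case (3)), completing the trichotomy. No step appears to present a real obstacle; the only mild subtlety is the observation that $\bar\rho$ is injective on graded pieces, which is what rules out a finite nil-support supported away from the origin.
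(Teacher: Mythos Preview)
Your proposal is correct and follows essentially the same approach as the paper: both arguments hinge on the observation that the induced Frobenius action on $M/0^\rho_M$ is injective, so $\nilsup M$ is closed under $n\mapsto np$, forcing any nonzero element of the nil-support to generate an infinite orbit. Your write-up is somewhat more explicit than the paper's in justifying why $\bar\rho$ is injective and why finite length of $M/0^\rho_M$ corresponds to finite nil-support, but the underlying idea is identical.
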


\begin{proof}
Since the zero module is the only module with empty degree support, (1) is immediate. If $j \in \nilsup M$, then $jp^e\in \nilsup M$ for all $e\in \NN$ since the induced Frobenius action on $M/0^\rho_M$ is injective. Thus, if $[M]_0$ is finite dimensional over $k$ then $M$ is generalized nilpotent if and only if it $M/0^\rho_M$ is supported only possibly in degree $0$. 
\end{proof}

\noindent If $\degsup M$ is finite, we can use the above lemma and the grading on $M$ to bound $\hsl M$.

\begin{cor}\label{cor:finite support hsl bound}
Let $M$ be a graded $R[F]$-module with $\degsup M$ a finite subset of $\ZZ$. If $e_0\in \NN$ is minimal so that $\degsup M \subset (-p^{e_0},p^{e_0})$ then $\hsl M\le \min\{\hsl [M]_0,e_0\}$. 
\end{cor}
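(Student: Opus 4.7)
My plan is to split the bound $\hsl M \le \min\{\hsl [M]_0, e_0\}$ into its two constituent inequalities $\hsl M \le e_0$ and $\hsl M \le \hsl [M]_0$, and to prove each by a homogeneous decomposition argument. The underlying observation for both is that since $\rho$ satisfies $\rho([M]_n) \subset [M]_{np}$, it respects the grading, and so $0^\rho_M$ is a graded submodule of $M$. In particular, any $m \in 0^\rho_M$ splits as a finite sum $m = \sum_n m_n$ with $m_n \in [M]_n \cap 0^\rho_M$, the sum being finite by hypothesis on $\degsup M$. This reduces the problem to bounding the exponent needed to annihilate each homogeneous piece.

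For the bound $\hsl M \le e_0$: when $n \ne 0$, $\rho^{e_0}(m_n) \in [M]_{np^{e_0}}$, and since $|np^{e_0}| \ge p^{e_0}$ lies outside $(-p^{e_0}, p^{e_0}) \supseteq \degsup M$, we get $\rho^{e_0}(m_n) = 0$ automatically. The remaining piece $m_0$ lies in $[M]_0 \cap 0^\rho_M = 0^\rho_{[M]_0}$ (as $\rho$ stabilizes $[M]_0$), and is controlled directly by the definition of $\hsl [M]_0$. For the symmetric bound $\hsl M \le \hsl [M]_0$, the roles swap: the degree-zero piece is killed by $\rho^{\hsl [M]_0}$ by definition, while the nonzero-degree pieces are driven out of $\degsup M$ by using the choice of $e_0$ in the push $\rho^{\hsl[M]_0}(m_n) \in [M]_{np^{\hsl[M]_0}}$.

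The main obstacle is to tighten each line of argument from the a priori natural $\max$ of the two thresholds down to the $\min$ claimed in the statement. As described, each decomposition step seems to want an exponent simultaneously exceeding both $e_0$ (for the degree-support push to annihilate nonzero-degree pieces) and $\hsl [M]_0$ (for the degree-zero piece). The hard part will be extracting a cross-comparison between these two quantities from the minimality of $e_0$, the finiteness of $\degsup M$, and the $\rho$-stability of $[M]_0$ — something finer than the forgetful-category argument of Remark~\ref{rmk:forgetful fctr} affords — so that the smaller of the two exponents really does suffice in each case. This is where I expect any subtlety or additional hypothesis to enter.
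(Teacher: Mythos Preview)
Your instinct is correct: the natural argument gives $\hsl M \le \max\{\hsl [M]_0, e_0\}$, and this is what the paper actually uses downstream. The statement as printed, with $\min$, is a typo. A one-line counterexample: take $M = k$ concentrated in degree $0$ with $\rho = 0$; then $0^\rho_M = M$, so $\hsl M = \hsl [M]_0 = 1$, while $\degsup M = \{0\} \subset (-p^0,p^0)$ forces $e_0 = 0$, and $\min\{1,0\} = 0 < 1 = \hsl M$. Symmetrically, $M = k$ in degree $1$ (with the zero action) gives $\hsl [M]_0 = 0$, $e_0 = 1$, $\hsl M = 1$, again violating the $\min$ bound.

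The paper gives no proof of the corollary, but its application in Theorem~\ref{thm:HSL bounds for Kunneth formula} makes the intended meaning clear: there the bound on $\hsl H^j_{\fm_T}(T)$ is a maximum containing \emph{both} $\hsl [H^j_{\fm_R}(R)]_0$ and $\fexp_j R$; if the $\min$ bound were true only one of these would be needed. Your decomposition argument --- split $m \in 0^\rho_M$ into homogeneous pieces, kill the nonzero-degree pieces in $e_0$ steps via the degree-support hypothesis, and kill the degree-zero piece in $\hsl [M]_0$ steps --- is exactly the intended proof of the corrected bound $\hsl M \le \max\{\hsl [M]_0, e_0\}$. There is no ``cross-comparison'' to extract; you should simply stop at the $\max$.
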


\begin{rmk}\label{rmk:infinite nilsup is fucking annoying}
It seems rather hard in general to determine the nil-support of $M$ when it is infinite. An effective algorithm to compute $\nilsup M$ would be of great use in understanding nilpotent singularity types and computing Frobenius test exponents.
\end{rmk}

\subsection{Graded nilpotent singularities}

At this point, it should be clear that the (generalized) $F$-depth of an $R[F]$-module over a local ring can be extended to the graded setting if we  measure the nilpotence the graded local cohomology supported in the homogeneous maximal ideal. The definitions of (generalized) $F$-depth, (generalized) weakly $F$-nilpotent, and $F$-nilpotent carry over verbatim for a graded $R[F]$-module using graded local cohomology. 


\begin{rmk}\label{rmk:graded case is similar}
The results of Subsections~\ref{sec:R[F]-modules} and \ref{sec:(g)F-depth} extend to the graded versions since the commutative diagrams  are largely identical.  Furthermore, as noted in Remark~\ref{rmk:homogeneous fte works like fte} in hand, the finiteness theorems given in \ref{rmk:finiteness theorems} will apply to $\fte^* R$ when $R$  is a graded local ring satisfying the graded analogues of the stated hypotheses. 
\end{rmk}

To help track nilpotence in degree 0, we use the following definition which is a nilpotent version by the classical $a$-invariant of a module.

\begin{dff}
Let $M$ be a graded $R[F]$-module $M$, we define the \textbf{base-nilpotent index of $M$} as \[b(M) := \sup\{n \in \ZZ \mid M \text{ is not nilpotent in degree } n\} \in \ZZ \cup \{-\infty\}.\] 
\end{dff}

\noindent Note that \textbf{$a$-invariant of $M$} is $
a(M)=\sup \left\lbrace \degsup M\right\rbrace \in \ZZ \cup \{\pm \infty\}.$
  As for the $a$-invariant, $b(M)$ is can be phrased in terms of the nil-support \[
b(M)=\sup\left\lbrace \nilsup M \right\rbrace \in \ZZ\cup\{\pm \infty\}
\] and trivially $b(M) = -\infty$ if and only if $M$ is nilpotent. 

For $j \geq 0$, we define \textbf{$j$-th base-nilpotent index of $R$} to be $b_j(R) = b(H^j_{\fm}(R))$. This is analogous to the $i$-th $a$-invariant of $R$, $a_i(R) := a(H_\fm^i(R))$. Finally, we have need to track the first cohomological index where $b_j(R)$ is zero, so we set $$b(R) = \inf\{ j\in \NN \mid b_j(R)=0 \} \in \NN\cup\{\infty\}.$$

\begin{rmk}\label{rmk:basenilpindex}
We record several observations about this sequence of invariants.
\begin{enumerate}
\item The graded $R[F]$-module $H^j_{\fm}(R)$ is nilpotent if and only if $b_j(R)=-\infty$.
\item For all $j$,  $b_j(R) \le 0$ as $H^j_\fm(R)$ is artinian, whence nilpotent in positive degree. 
\item If $a(H^j_\fm(R))<0$ for all $0 \le j \le d$, then $b_j(R)<0$ for all $0\le j\le d$, which implies $b(R)=\infty$.
\item Generally $b(R) \ge \fdp R$, since $b_j(R) = -\infty$ for $j < \fdp R$. If $R$ is weakly $F$-nilpotent, then $b(R) \ge \dim R$.
\item Further, if we have $f=\fdp R<\gfdp R$, then $H^f_{\fm}(R)$ is not nilpotent but is generalized nilpotent, whence $b_{f}(R)=0$ and $b(R) \leq f$. 
\end{enumerate}
\end{rmk}

We have an immediate corollary to Lemma~\ref{lem:graded nilp and gen nilp}. Note the lemma applies to the graded local cohomology modules of $R$, as graded local duality ensures that $[H^j_\fm(R)]_n$ is a finite-dimensional $k$-vector space for all $n \in \ZZ$.

\begin{cor}\label{cor:gwfn and b(R)=infty implies wFn}
For $R$ a generalized weakly $F$-nilpotent graded ring, $R$ is weakly $F$-nilpotent if and only if $b_j(R)<0$ for $0\le j<d$.  
\end{cor}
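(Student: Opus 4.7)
The proof is essentially a direct application of the trichotomy established in Lemma~\ref{lem:trichotomy} to each local cohomology module $H^j_\fm(R)$ for $0 \le j < d$. The plan is to handle the two implications separately, using Remark~\ref{rmk:basenilpindex} for the easy direction and the trichotomy for the harder converse.

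For the forward direction, suppose $R$ is weakly $F$-nilpotent. Then by definition $H^j_\fm(R)$ is nilpotent for $0 \le j < d$, so by Remark~\ref{rmk:basenilpindex}(1), we have $b_j(R) = -\infty$ for $0 \le j < d$, which is in particular strictly less than $0$.

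For the converse, assume $R$ is generalized weakly $F$-nilpotent and that $b_j(R) < 0$ for every $0 \le j < d$. Fix such a $j$ and set $M = H^j_\fm(R)$. Graded local duality guarantees that $[M]_n$ is a finite-dimensional $k$-vector space for every $n \in \ZZ$, so $M$ satisfies the hypothesis of Lemma~\ref{lem:trichotomy}. By assumption $M$ is generalized nilpotent, so the trichotomy rules out the possibility that $\nilsup M$ is infinite; thus either $M$ is nilpotent, or $\nilsup M = \{0\}$. In the latter case $b_j(R) = \sup \nilsup M = 0$, contradicting the hypothesis $b_j(R) < 0$. Hence $M$ must be nilpotent. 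Since this holds for every $0 \le j < d$, $R$ is weakly $F$-nilpotent.

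There is no real obstacle here: the content has already been packaged into Lemma~\ref{lem:trichotomy}, and the corollary is simply extracting the inequality characterization of the nilpotent case of the trichotomy. The only thing to verify carefully is the finite-dimensionality of the graded pieces of $H^j_\fm(R)$, which is standard via graded local duality and is already noted in the text preceding the statement.
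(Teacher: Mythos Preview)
Your proof is correct and follows exactly the approach the paper intends: the corollary is stated as an immediate consequence of Lemma~\ref{lem:trichotomy}, with the finite-dimensionality of $[H^j_\fm(R)]_n$ supplied by graded local duality (noted in the text just before the statement). Your explicit unpacking of the trichotomy for each $j<d$ is precisely the argument the paper leaves implicit.
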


The nilpotence of the degree 0 part of an artinian module with a Frobenius action was studied in \cite{LSW16} over $\mathbf{F}_p[x_1,\ldots,x_n]$, where connections were made with the composition series for the canonical $\mathcal{D}$-module and $\mathcal{F}$-module structures, see \cite[Thm.~2.9]{LSW16}. We avoid the language of $\mathcal{F}$-modules here in favor of a more elementary approach.

\begin{rmk}\label{rmk:degree0nilpotent} Note that $b(R)=\infty$ is a necessary condition for $R$ to be $F$-nilpotent. Certainly when $R$ is $F$-nilpotent, it is weakly $F$-nilpotent, and thus $b_j(R)=-\infty$ for $j<d$. Furthermore, $[H^d_\fm(R)]_0$ is in $0^*_{H^d_\fm(R)}$ since $cF^e([H^d_\fm(R)]_0)\subset c[H^j_\fm(R)]_0=0$ for $\deg(c)\gg 0$. Thus, if $0^*_{H^d_\fm(R)}=0^F_{H^d_\fm(R)}$, we get $[H^d_\fm(R)]_0$ is nilpotent and so $b_{d}(R) \neq 0$, whence $R$ being $F$-rational forces $b(R) = \infty$.
\end{rmk}

\noindent In some cases, $b(R)=\infty$ can also be sufficient to ensure that $R$ is $F$-nilpotent.

\begin{lem}\label{lem:punctured spec}
If $R$ is $F$-rational on the punctured spectrum, then $R$ is $F$-nilpotent if and only if $b(R) = \infty$.
\end{lem}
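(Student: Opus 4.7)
The forward direction is already covered by Remark~\ref{rmk:degree0nilpotent}, so only the converse requires proof. Assume $R$ is $F$-rational on the punctured spectrum and $b(R)=\infty$. First, I would deduce weak $F$-nilpotence: $F$-rationality on the punctured spectrum forces $R_{\fp}$ to be Cohen-Macaulay for every $\fp\neq\fm$, so each $H^j_{\fm}(R)$ with $j<d$ has finite length and hence finite degree support. The trichotomy Lemma~\ref{lem:trichotomy} then forces $b_j(R)\in\{-\infty,0\}$, and since the hypothesis $b(R)=\infty$ rules out $b_j(R)=0$, we conclude $b_j(R)=-\infty$, so that $H^j_\fm(R)$ is nilpotent for all $j<d$.

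For the top local cohomology, I would set $N:=0^*_{H^d_\fm(R)}/0^F_{H^d_\fm(R)}$ and aim to show $N=0$. Two facts follow immediately. First, $b(R)=\infty$ together with $b_d(R)\leq 0$ from Remark~\ref{rmk:basenilpindex}(2) forces $\nilsup H^d_\fm(R)\subset\ZZ_{<0}$, so $[N]_n=0$ for all $n\geq 0$. Second, the graded Frobenius action descends to an injective endomorphism of $N$, inherited from the quotient $H^d_\fm(R)/0^F_{H^d_\fm(R)}$. The crucial remaining input is that $N$ has finite length: via graded local duality with the graded canonical module $\omega_R$, $N$ is the graded Matlis dual of a quotient $\tau^F(\omega_R)/\tau(\omega_R)$ of finitely generated submodules of $\omega_R$, and the $F$-rationality of $R_\fp$ at every punctured prime makes this quotient supported only at $\fm$, hence of finite length; Matlis duality preserves finite length, so $N$ is of finite length.

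Once $N$ has finite length, its degree support is contained in a finite interval, in particular bounded below. If $N$ were nonzero, the minimal degree $n_0$ of a nonzero component would satisfy $n_0<0$, and injectivity of Frobenius on $N$ would yield nonzero components $[N]_{n_0 p^e}$ for all $e\geq 0$, producing arbitrarily negative degrees with $n_0 p^e\to -\infty$, contradicting the lower bound. Hence $N=0$, which combined with weak $F$-nilpotence gives $F$-nilpotence of $R$. The main obstacle I anticipate is the Matlis duality step establishing finite length of $N$; the remainder is a concise manipulation of the graded Frobenius action and the nil-support condition extracted from $b(R)=\infty$.
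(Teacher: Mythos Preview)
Your proof is correct and follows essentially the same route as the paper's: both establish weak $F$-nilpotence from the generalized Cohen--Macaulay property together with the trichotomy (Lemma~\ref{lem:trichotomy}) and $b(R)=\infty$, and both handle the top local cohomology by showing the relevant $F$-stable module has finite length and then invoking the trichotomy-type degree argument.

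The one place where your argument takes a detour is the finite length of $N=0^*_{H^d_\fm(R)}/0^F_{H^d_\fm(R)}$. The paper avoids the Matlis duality computation entirely: it cites \cite[Ex.~2.7(1)]{ST15} for the fact that $F$-rationality on the punctured spectrum forces $0^*_{H^d_\fm(R)}$ itself to have finite length, whence $N$ trivially does too. With that in hand, the paper applies Lemma~\ref{lem:trichotomy} directly to the graded $R[F]$-module $0^*_{H^d_\fm(R)}$: its nil-support is contained in $\{0\}$, and $b_d(R)<0$ rules out degree $0$, so $0^*_{H^d_\fm(R)}$ is nilpotent, i.e.\ equal to $0^F_{H^d_\fm(R)}$. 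Your Matlis duality step (identifying $N^\vee$ with $\tau^F(\omega_R)/\tau(\omega_R)$ and checking its support) is correct but unnecessary, and it is exactly the step you flagged as the anticipated obstacle; the paper sidesteps it cleanly.
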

\begin{proof} 
If $R$ is $F$-nilpotent, by we saw $b(R)=\infty$ in Remark~\ref{rmk:degree0nilpotent} so it remains to prove the converse. By \cite[Ex. 2.7(1)]{ST15}, $F$-rational on the punctured spectrum forces $R$ to be generalized Cohen-Macaulay, whence generalized weakly $F$-nilpotent, and $0_{H_\fm^d(R)}^*$ has finite length.  If $b(R)=\infty$, then clearly $b_j(R) < 0$ for all $j \leq \dim R$ and so Corollary~\ref{cor:gwfn and b(R)=infty implies wFn} shows that $R$ is weakly $F$-nilpotent. Further, by Lemma~\ref{lem:graded nilp and gen nilp}, $\nilsup 0^*_{H^d_\fm(R)} = \varnothing$ which forces $0^*_{H^d_\fm(R)} = 0^F_{H^d_\fm(R)}$.
\end{proof}

\begin{rmk}
Along a similar vein, if $R$ is equidimensional and $F$-nilpotent on the punctured spectrum, then \cite[Prop.~3.1 and Lem.~5.1]{KMPS} imply that $R$ is generalized weakly $F$-nilpotent and $0^*_{H^d_\fm(R)}/0^F_{H^d_\fm(R)}$ is finite length. The same proof of Lemma~\ref{lem:punctured spec} shows $R$ is $F$-nilpotent if and only if $b(R)=\infty$ in this case also.
\end{rmk}

To provide some concrete examples, we will use Fermat hypersurfaces. Our first example emphasizes how the $F$-nilpotent property is characteristic-dependent.

\begin{xmp}\label{xmp:Blickle}
The explicit calculation in \cite[Ex. 5.28]{Bli01} shows that the nilpotence of the Cohen-Macaulay ring $R = k[x,y,z]/(x^4 + y^4 - z^4)$ is dependent on the characteristic of $k$ modulo $4$. When $k$ is of characteristic $p \equiv 3 \bmod 4$, $[H^2_\fm(R)]_0$ is nilpotent so $b_2(R)<0$ and when $p \equiv 1 \bmod 4$, $[H^2_\fm(R)]_0$ is not nilpotent so $b_2(R)=0$.  Consequently, if $p \equiv 3 \bmod 4$, then $R$ is $F$-nilpotent, and if $p \equiv 1 \bmod 4$, then the Frobenius action is injective on $[H_\fm^2(R)]_0\subset 0^*_{H^2_\fm(R)}$, so $R$ is not $F$-nilpotent. See Lemma~\ref{lem:FermatHyp} for a family of similar examples. 
\end{xmp}

To generalize Example~\ref{xmp:Blickle}, we give a sufficient condition for degree $d$ Fermat hypersurface \[R = k[x_0,\ldots,x_n]/(x_0^d + \cdots + x_{n-1}^d - x_n^d)\] to be $F$-nilpotent. This ring is Cohen-Macaulay, so it is already weakly $F$-nilpotent and trivially $F$-nilpotent on the punctured spectrum. By Remark~\ref{rmk:degree0nilpotent}, we can determine a condition for when $H_\fm^n(R)$ is nilpotent in degree $0$ which will ensure the ring is $F$-nilpotent.

\begin{lem}\label{lem:FermatHyp} For $R = k[x_0,\ldots,x_n]/(x_0^d + \cdots + x_{n-1}^d - x_n^d)$, if  $p > d$, $n \geq 2$, and $p \equiv -1 \bmod d$, then $H_\fm^n(R)$ is nilpotent in degree $0$. In fact, the Frobenius is zero on $[H_\fm^n(R)]_0$.
\end{lem}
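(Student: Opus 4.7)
The plan is to compute the Frobenius action on $[H_\fm^n(R)]_0$ by descending from the polynomial ring. Write $S = k[x_0,\ldots,x_n]$ and $f = x_0^d+\cdots+x_{n-1}^d - x_n^d$. The hypersurface presentation gives a short exact sequence
$$0 \to S(-d) \xrightarrow{f} S \to R \to 0,$$
and since $S$ is Cohen-Macaulay of dimension $n+1$, only $H^{n+1}_\fm(S)$ survives in the induced long exact sequence of graded local cohomology. Thus $H^n_\fm(R) \cong \ker\bigl(f\colon H^{n+1}_\fm(S)(-d)\to H^{n+1}_\fm(S)\bigr)$. In degree $0$, the target vanishes because every Čech-type basis element $1/(x_0^{a_0}\cdots x_n^{a_n})$ of $H^{n+1}_\fm(S)$ sits in negative degree, so $[H^n_\fm(R)]_0 \cong [H^{n+1}_\fm(S)]_{-d}$, which has $k$-basis $\eta_\ba = [1/(x_0^{a_0}\cdots x_n^{a_n})]$ for $\ba=(a_0,\dots,a_n)$ with each $a_i\ge 1$ and $\sum a_i = d$. (If $d\le n$ this set is empty and the lemma is trivial, so assume $d\ge n+1$.)

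Next I would transport the Frobenius action across this isomorphism by lifting $F^R$ to a $p$-linear endomorphism of the short exact sequence. Commutativity with $F^S$ in the middle forces the lift $\phi\colon S(-d)\to S(-d)$ on the left to be $\phi(s) = f^{p-1}s^p$, since $F^S(fs) = f^p s^p = f\cdot\phi(s)$. Passing to local cohomology, the identification $[H^n_\fm(R)]_0 \cong [H^{n+1}_\fm(S)]_{-d}$ intertwines the Frobenius on $H^n_\fm(R)$ with the map $\xi\mapsto f^{p-1} F^S(\xi)$. Explicitly,
$$\eta_\ba \;\longmapsto\; \frac{f^{p-1}}{x_0^{pa_0}\cdots x_n^{pa_n}} \quad \in H^{n+1}_\fm(S).$$

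The remainder is a clean combinatorial vanishing argument. Expand $f^{p-1}$ by the multinomial theorem as $\sum_{\bm} c_\bm\, x_0^{db_0}\cdots x_n^{db_n}$, where $\bm=(b_0,\dots,b_n)$ has $b_i\ge 0$ and $\sum b_i = p-1$. The term corresponding to $\bm$ contributes $c_\bm/\prod_i x_i^{pa_i - db_i}$, which vanishes in $H^{n+1}_\fm(S)$ unless $pa_i - db_i \ge 1$ for every $i$. Writing $p = md-1$ (using $p\equiv -1\bmod d$), the inequality $pa_i - db_i \ge 1$ gives
$$b_i \;\le\; \frac{pa_i - 1}{d} \;=\; ma_i - \frac{a_i+1}{d} \;<\; ma_i,$$
so $b_i \le ma_i - 1$. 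Summing over $i$ yields $p - 1 = \sum b_i \le \sum (ma_i - 1) = md - (n+1)$, i.e., $md-2 \le md-(n+1)$, forcing $n \le 1$ and contradicting $n \ge 2$. Hence every monomial of $f^{p-1}F^S(\eta_\ba)$ is killed in $H^{n+1}_\fm(S)$, so the Frobenius vanishes on all of $[H_\fm^n(R)]_0$.

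The main obstacle is the first step: correctly establishing that the lifted Frobenius on $S(-d)$ is multiplication by $f^{p-1}$ composed with $F^S$, so that the induced map on $[H^{n+1}_\fm(S)]_{-d}$ carries the twist $f^{p-1}$ that drives the whole combinatorial argument. Once this identification is in place, the multinomial estimate is forced by the hypothesis $p\equiv -1 \bmod d$ together with $n\ge 2$, and the conclusion follows without any further finesse.
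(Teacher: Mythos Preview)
Your proof is correct. The approach differs from the paper's in one structural way: the paper works directly in $R$, taking $x_0,\ldots,x_{n-1}$ as a system of parameters, writing the basis of $[H^n_\fm(R)]_0$ as $\eta_{\ui}=[x_n^{d-i_n}/(x_0^{i_0}\cdots x_{n-1}^{i_{n-1}})]$, and then substituting $x_n^d=x_0^d+\cdots+x_{n-1}^d$ after applying $F$ to reduce to a multinomial expansion in $n$ variables. You instead lift to $S$, identify $[H^n_\fm(R)]_0\cong [H^{n+1}_\fm(S)]_{-d}$, and transport Frobenius to the twisted action $\xi\mapsto f^{p-1}F^S(\xi)$; the multinomial expansion then runs symmetrically over all $n+1$ variables. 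Your endgame inequality $p-1=\sum b_i\le md-(n+1)$ is the symmetric counterpart of the paper's $n(d-1)\le i_n<d$, and both collapse exactly when $n\ge 2$. The lift you worry about is indeed the one subtlety, but it is standard (and in fact the paper uses the same $f^{p-1}F$ device later, in Lemma~\ref{lem:diag hyp diagram}); once that is in place the combinatorics is forced. Your version has the mild advantage of treating all variables on equal footing and of making the role of the hypersurface structure explicit, while the paper's version stays closer to an elementary \v{C}ech computation inside $R$ itself.
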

\begin{proof}
Much as in \cite[Lem. 5.26]{Bli01}, the calculation comes down to an explicit \v{C}ech computation. The submodule $[H_\fm^n(R)]_0$ has basis constructed from partitions $I$ with $\ui = (i_0,\ldots,i_n) \in I$ if and only if $i_j \geq 1$ and $\sum_{j=0}^n i_j = d$. In particular, the set of classes $\eta_{\ui} := \left[ \frac{ x_n^{d-i_n} }{  x_0^{i_0} \cdots x_{n-1}^{i_{n-1}}}\right]$ forms the desired basis. Write $(d - i_n)p - i_n = dr$ for integer $r$. Directly we see,

\begin{eqnarray*}
F(\eta_{\ui}) & = & \left[ \frac{ x_n^{(d-i_n)p} }{  x_0^{pi_0} \cdots x_{n-1}^{pi_{n-1}}}\right] \\
 & = & z^{i_n} \left[ \frac{ x_n^{(d-i_n)p - i_n} }{  x_0^{pi_0} \cdots x_{n-1}^{pi_{n-1}}}\right] \\
 & = & z^{i_n} \left[ \frac{ x_n^{dr} }{  x_0^{pi_0} \cdots x_{n-1}^{pi_{n-1}}}\right] \\
 & = & z^{i_n} \left[ \frac{ (x_0^d + \cdots + x_{n-1}^d)^r }{  x_0^{pi_0} \cdots x_{n-1}^{pi_{n-1}}}\right] \\
 & = & z^{i_n} \left[ \sum_{j_0 + \cdots + j_{n-1} = r} c_{j_0,\ldots,j_{n-1}} \frac{ x_0^{dj_0} \cdots x_{n-1}^{dj_{n-1}} }{  x_0^{pi_0} \cdots x_{n-1}^{pi_{n-1}}}\right] \\
\end{eqnarray*} where $c_{j_0,\ldots,j_{n-1}}$ is a multinomial coefficient. For any summand in this expression to be non-zero, it is necessary that $dj_t < p i_t$ for all $0 \leq t \leq n-1$. As $p \equiv -1 \bmod d$, there is no solution for $j_t$ in an equation $dj_t = p i_t - 1$, in fact, that $d j_t < p i_t$ forces $d j_t \leq p i_t - (d-1)$. However, this would mean that: $$\sum_{t = 0}^{n-1} dj_t \leq p \left(\sum_{i = 0}^{n-1} i_t\right) - n(d-1) = (d - i_n)p - n(d-1).$$ This creates a contradiction, as $\sum_{t = 0}^{n-1} dj_t = dr = (d - i_n)p - i_n$, thus $(d - i_n)p - i_n \leq (d - i_n)p - n(d-1)$ then forces $n (d-1) \leq i_n < d$ which is clearly false. Therefore, for some $t$, $dj_t \geq p i_t$ and so each term in the expression for $F(\eta_{\ui})$ is zero. 
\end{proof}

It seems likely that the Frobenius is nilpotent in degree zero on $H_\fm^n(R)$ for any $p \not\equiv 1 \bmod d$. Indeed, this is asserted and easily shown for $d = 5$ in \cite[Ex. 5.28]{Bli01}. Establishing this fact  requires a finer \v{C}ech computation than we give here, but we will not remark further on these intermediate cases. We end this subsection by describing a sufficient ideal-theoretic condition ensure the top local cohomology module is nilpotent in degree $0$. 

\begin{thm} Let $R$ be standard graded and let $\ux=x_1,\ldots,x_d$ be a homogeneous system of parameters with $\deg(x_i) =1$ for all $i$. Write  $x=x_1\cdots x_d$. If for each $t$ there is $s$ so that $[R]_{dt} \subset (\ux^{t+s})^F \colon x^s$, then $H_{\fm_R}^d(R)$ is nilpotent in degree $0$.
\end{thm}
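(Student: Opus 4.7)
The plan is to work directly with the colimit presentation $H^d_{\fm_R}(R) = \varinjlim_t R/(\ux^t)$, whose transition maps are multiplication by $x = x_1 \cdots x_d$. Since each $x_i$ has degree $1$, the class $[z + (\ux^t)]$ sits in degree $\deg(z) - td$, so every $\eta \in [H^d_{\fm_R}(R)]_0$ admits a representative of the form $[z + (\ux^t)]$ with $z \in [R]_{dt}$, and vanishing in the colimit means there exists $N \geq 0$ with $x^N z \in (\ux^{t+N})$.

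Given such an $\eta$, the canonical Frobenius acts as $F^e(\eta) = [z^{p^e} + (\ux^{tp^e})]$. The hypothesis, applied to the degree parameter $t$, yields $s = s(t)$ such that $zx^s \in (\ux^{t+s})^F$, and unraveling the definition of Frobenius closure gives an $e \geq 1$ (depending on $z$) with $(zx^s)^{p^e} \in (\ux^{(t+s)p^e})$; that is, $z^{p^e} x^{sp^e} \in (\ux^{tp^e + sp^e})$. Taking $N = sp^e$ in the colimit-vanishing criterion above shows $F^e(\eta) = 0$ in $H^d_{\fm_R}(R)$. As $\eta$ was arbitrary in $[H^d_{\fm_R}(R)]_0$, we conclude $[H^d_{\fm_R}(R)]_0 \subset \bigcup_e \ker\left(F^e\colon [H^d_{\fm_R}(R)]_0 \to [H^d_{\fm_R}(R)]_0\right)$, which is exactly Definition~\ref{dff:gradedNilpotent} applied in degree $0$.

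The only substantive content is an index-matching exercise: the exponent $e$ produced by the Frobenius-closure containment in the hypothesis is exactly the exponent needed to kill $\eta$, and the colimit parameter $N = sp^e$ grows with $e$ so as to absorb the factor $x^{sp^e}$ introduced by the hypothesis. I do not anticipate a deep obstacle; the whole argument is essentially a direct transcription of the ideal-theoretic containment into a vanishing statement in the direct limit, with the role of the auxiliary shift $s$ being to convert the multiplication-by-$x$ transition maps in the colimit into a form compatible with the Frobenius closure $(\ux^{t+s})^F$.
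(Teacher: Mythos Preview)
Your argument is correct and essentially identical to the paper's: both use the colimit presentation $H^d_{\fm_R}(R)=\varinjlim_t R/(\ux^t)$, read off that a degree-$0$ class is represented by $z\in [R]_{dt}$, invoke the hypothesis to get $zx^s\in(\ux^{t+s})^F$, and then unwind Frobenius closure to produce an $e$ with $z^{p^e}x^{sp^e}\in(\ux^{(t+s)p^e})$, which is exactly the vanishing criterion for $F^e(\eta)$ in the colimit. The only cosmetic difference is that the paper first packages the computation as a description of the whole submodule $0^F_{H^d_{\fm_R}(R)}$ and then observes degree-$0$ classes lie in it, whereas you go element-by-element directly.
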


\begin{proof}

Represent a class $\eta$ in $H_{\fm_R}^d(R) \cong \varinjlim_t R/(\ux^t)$ by a representative $[z + (\ux^t)]$. Recall, in the grading for $H_{\fm_R}^d(R)$, $\deg [z + (\ux^t)] = \deg z -dt$. For $e \ge 0$ set $q = p^e$. The natural Frobenius action has $\rho^e(\eta) = [z^q + (\ux^{tq})]$. This is zero if and only if for some $s$, $(x^s z)^q \in (\ux^{(t+s)q})$, i.e., $z \in (\ux^{t+s})^F \colon x^s$. Thus we have $$0_{H_{\fm_R}^d(R)}^F = \{ \eta = [z + (\ux^t)] \colon  \textrm{there is } s \textrm{ so that } z \in ((\ux^{t+s})^F \colon x^s)\}.$$ 

Any such $[z + (\ux^t)]$ has degree $0$ if and only if $\deg z = dt$. By assumption, there is $s$ with $z \in [R]_{dt} \subset ((\ux^{t+s})^F \colon x^s)$ whence $[z+(\ux^t)]\in 0_{H_{\fm_R}^d(R)}^F$.
\end{proof}

\section{Segre Products} \label{sec:segre}

In this section, we study nilpotent singularity types for Segre products of two graded rings defined over the same field. To do so, we study the the natural diagonal Frobenius action on the Segre product of two modules with  Frobenius actions. We especially try to recapture classical results known for the Cohen-Macaulay property of the Segre product, proved in \cite{GW78a}.

\begin{run}
Let $(R,\fm)$ and $(S,\fn)$ standard graded rings such that $[R]_0 = [S]_0 = k$, the same field. Denote by $\fm_R$ and $\fm_S$ the respective homogeneous maximal ideals. We write $T = R \# S$ for the Segre product of $R$ and $S$, defined by $R \# S = \bigoplus_{n \geq 0} [R]_n \otimes_k [S]_n$, where $[T]_0=[R]_0\otimes_k [S]_0 = k\otimes_k k \simeq k$ and denote by $\fm_T$ for its homogeneous maximal ideal. If $\dim R = d_R$ and $\dim S = d_S$, then $d_T := \dim T = d_R + d_S - 1$.  We assume $\depth(R)\ge 2$ and $\depth(S)\ge 2$ to use a simplified form of the K\"unneth formula, described later. Finally, we let $M$ be a $\ZZ$-graded $R$-module and $N$ be $\ZZ$-graded $S$-module. \end{run}
\subsection{Nilpotent singularity types for Segre products}
The Segre product of $M$ and $N$ is defined as for rings, i.e. $M \# N$ is the $\ZZ$-graded $T$-module with $[M \# N]_n = [M]_n \otimes_k [N]_n$. First, we note that $M\# N = 0$ if and only if $M$ and $N$ have complementary degree supports.

\begin{lem}
We have $\degsup M\# N =\degsup M \cap \degsup N$.
\end{lem}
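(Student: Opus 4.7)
The plan is to unpack the definition of the Segre product degree by degree and use that tensor products over a field of vector spaces detect nonvanishing in both factors. Precisely, by definition $[M \# N]_n = [M]_n \otimes_k [N]_n$, so $n \in \degsup(M \# N)$ if and only if this tensor product is a nonzero $k$-vector space.

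First I would observe that $[M]_n$ and $[N]_n$ are $k$-vector spaces since $[R]_0 = [S]_0 = k$ and the grading is compatible with these scalars. Then, because $k$ is a field, every $k$-module is free, and so $[M]_n \otimes_k [N]_n$ is a $k$-vector space whose dimension is $(\dim_k [M]_n)(\dim_k [N]_n)$. Hence $[M]_n \otimes_k [N]_n \neq 0$ precisely when both $[M]_n \neq 0$ and $[N]_n \neq 0$, i.e.\ when $n \in \degsup M \cap \degsup N$.

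There is no real obstacle here: the only subtlety worth mentioning is that the statement genuinely uses that the tensor product is taken over the field $k$ and not over $R$ or $S$, since over a general ring a tensor product of nonzero modules can vanish. The conclusion $\degsup(M \# N) = \degsup M \cap \degsup N$ then follows by taking the union over $n \in \ZZ$.
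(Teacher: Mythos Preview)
Your proof is correct and follows essentially the same approach as the paper's: both arguments reduce to the linear algebra fact that a tensor product of $k$-vector spaces is nonzero if and only if each factor is nonzero. The paper phrases the reverse inclusion slightly more concretely by choosing nonzero $m\in [M]_t$, $n\in [N]_t$ and noting $\operatorname{Span}_k(m)\otimes_k\operatorname{Span}_k(n)$ is one-dimensional, whereas you invoke the dimension formula $(\dim_k [M]_n)(\dim_k [N]_n)$ directly; these are the same idea, though the paper's phrasing sidesteps any ambiguity when one of the graded pieces happens to be infinite-dimensional.
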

\begin{proof}
The proof hinges on a simpler fact from linear algebra. For finite-dimensional $\ZZ$-graded $k$-vector spaces $V$ and $W$, then $\dim V\otimes_k W = \dim V\cdot \dim W$. Clearly $\degsup M\# N\subset \degsup M\cap\degsup N$ since $[M]_t \otimes_k 0 \simeq 0$. Further, if $t\in\degsup M\cap\degsup N$, then there is a nonzero $m\in [M]_t$ and $n\in [N]_t$, and the $k$-vector space dimension of $\operatorname{Span}_k(m)\otimes_k \operatorname{Span}_k(n)$ is equal to $1\cdot 1=1$, i.e. $0\neq m\otimes n = m\# n \in [M]_t\otimes_k [N]_t$, so $t \in \degsup M\# N$.
\end{proof}

When $(M,\rho_M)$ is a graded $R[F]$-module and $(N,\rho_N)$ is a graded $S[F]$-modules, we consider $M \# N$ as a graded $T[F]$-module under its diagonal Frobenius action denoted $\rho=\rho_M \# \rho_N$. We need to understand when a Segre product of graded modules with Frobenius actions is nilpotent.

\begin{lem}\label{lem:nilSegre}
We have $\nilsup M \# N = \nilsup M \,\cap\, \nilsup N$. In particular, if $[M]_n$ and $[N]_n$ are finite-dimensional over $k$ for all $n\in\ZZ$, then $M\# N$ is generalized nilpotent if and only if $\nilsup M\# N \subset \{0\}$.
\end{lem}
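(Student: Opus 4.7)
The plan is to establish the set equality $\nilsup M \# N = \nilsup M \cap \nilsup N$ degree by degree using the definition of nilpotence in degree $n$ (Definition~\ref{dff:gradedNilpotent}), and then deduce the generalized nilpotence statement by applying the trichotomy of Lemma~\ref{lem:trichotomy} to $M \# N$.

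For the containment $\nilsup(M\#N) \subseteq \nilsup M \cap \nilsup N$, I would argue by contrapositive. Suppose $n \notin \nilsup M$, so that for every $m \in [M]_n$ there exists $e(m) \in \NN$ with $\rho_M^{e(m)}(m) = 0$. Given any $\xi \in [M \# N]_n = [M]_n \otimes_k [N]_n$, write it as a finite sum $\xi = \sum_{i=1}^r m_i \otimes n_i'$. Since there are only finitely many $m_i$'s, taking $E = \max_i e(m_i)$ yields
\[
\rho^E(\xi) = \sum_{i=1}^r \rho_M^E(m_i) \otimes \rho_N^E(n_i') = 0,
\]
so $n \notin \nilsup(M \# N)$. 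A symmetric argument handles the case $n \notin \nilsup N$.

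For the reverse containment, suppose $n \in \nilsup M \cap \nilsup N$. Choose $m \in [M]_n$ with $\rho_M^e(m) \neq 0$ for all $e \in \NN$ and $n' \in [N]_n$ with $\rho_N^e(n') \neq 0$ for all $e \in \NN$. Then the simple tensor $m \# n' \in [M \# N]_n$ satisfies
\[
\rho^e(m \# n') \;=\; \rho_M^e(m) \# \rho_N^e(n') \;\in\; [M]_{np^e} \otimes_k [N]_{np^e}.
\]
Since $\rho_M^e(m)$ and $\rho_N^e(n')$ are each nonzero homogeneous elements, the linear-algebra fact underlying the previous lemma (that the tensor product of two nonzero vectors in a $k$-vector space is nonzero) shows $\rho^e(m \# n') \neq 0$ for every $e$. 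Hence $n \in \nilsup(M \# N)$, giving the reverse inclusion.

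For the ``in particular'' claim, observe that when $[M]_n$ and $[N]_n$ are finite-dimensional $k$-vector spaces, so is each graded piece $[M \# N]_n = [M]_n \otimes_k [N]_n$. Therefore the trichotomy of Lemma~\ref{lem:trichotomy} applies to the graded $T[F]$-module $M \# N$, which immediately gives that $M \# N$ is generalized nilpotent if and only if $\nilsup(M \# N) \subseteq \{0\}$. The only subtle point in this whole argument is the finiteness of the tensor-decomposition of $\xi$ in the forward inclusion; this is what allows the pointwise-annihilation hypothesis on $M$ to upgrade to uniform annihilation on an arbitrary element of $[M \# N]_n$, bypassing any need to assume finite-dimensionality of $[M]_n$ itself.
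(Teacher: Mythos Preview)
Your proof is correct and follows essentially the same approach as the paper. The reverse inclusion and the ``in particular'' clause are handled identically: pick non-nilpotent elements $m$ and $n'$ in the given degree and use that the tensor of nonzero vectors is nonzero, then invoke the trichotomy of Lemma~\ref{lem:trichotomy} after noting $\dim_k[M\# N]_n = \dim_k[M]_n\cdot\dim_k[N]_n<\infty$. The only difference is that the paper asserts the forward inclusion $\nilsup M\# N \subset \nilsup M \cap \nilsup N$ ``for free,'' whereas you spell out the contrapositive argument via a finite tensor decomposition; your version is more explicit but not materially different.
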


\begin{proof}
Again, we have $\degsup M\# N/0^\rho_{M\# N} \subset \degsup M/0^{\rho_M}_M \cap \degsup 0^{\rho_N}_N$ for free. But if $t \in \degsup M/0^{\rho_M}_M \cap \degsup N/0^{\rho_N}_N$, then there an $n \in [N]_t$ and $m\in [M]_t$ so that $\rho^e_N(n)\neq 0$ and $\rho^e_M(m)\neq 0$ for all $e \in \NN$. But then $\rho^e(m\# n)\neq 0$ for all such $e$, so $t \in \degsup M\# N$.

Now if $\dim_k[M]_n<\infty$ and $\dim_k[N]_n<\infty$ for each $n\in \ZZ$, then $\dim_k [M\# N]_n = (\dim_k [M]_n)\cdot (\dim_k [N]_n)<\infty$ for each $n\in \ZZ$. Thus, $M\# N$ is generalized nilpotent if and only if it is nilpotent in nonzero degree, which is equivalent to the given intersection condition.
\end{proof}

Our next aim is to consider how $F$-depth and generalized $F$-depth behave under Segre products. Under mild vanishing, $H^j_\fm(M) = 0 = H^j_\fn(N)$ for $j=0,1$, which is guaranteed by our running assumptions, we may apply the K\"unneth formula, \cite[Thm.~4.1.5]{GW78a} \begin{equation*}\label{eq:Kunneth} H_{\fm_T}^j(M \# N) \cong  H_{\fm_R}^j(M) \# N \oplus M \# H_{\fm_S}^j(N) \oplus \left( \bigoplus_{r+s=j+1} H_{\fm_R}^r(M) \# H_{\fm_S}^s(N) \right).
\end{equation*} By \cite[Thm.~4.4.4 (i)]{GW78a}, the ``expected" depth formula for Segre products, i.e., the validity of the equation  $\depth(T) = \depth(R)+\depth(S)-1$, is obstructed by the $a$-invariants of $R$ and $S$. We will see that the $b_j$-invariants of $R$ and $S$ provide a similar obstruction for the expected $F$-depth formula. 

\begin{thm}\label{thm:fdepthSegre}
Write $f = \fdp R + \fdp S -1$. We have $\fdp T \ge \min\{b(R),b(S),f\}$. Furthermore, if $\fdp R =b(R)$ and $\fdp S =b(S)$, we have $\fdp T =f$. 
\end{thm}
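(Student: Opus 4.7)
The plan is to apply the K\"unneth decomposition of Goto-Watanabe (Thm~4.1.5) to $H^j_{\fm_T}(T)$, equip each Segre summand with its diagonal Frobenius action $F_R \# F_S$, and analyze each summand via the nil-support intersection formula of Lemma~\ref{lem:nilSegre}. One must first verify that the K\"unneth isomorphism is $T[F]$-linear; this follows from the compatibility of the absolute Frobenius on $\Proj T \simeq \Proj R \times \Proj S$ with the product factorization, so that the induced action on K\"unneth sheaf cohomology is $F_R \otimes F_S$ on each tensor factor.

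For the lower bound, fix $j < \min\{b(R), b(S), f\}$ and show each of the three K\"unneth summands of $H^j_{\fm_T}(T)$ is nilpotent, then combine via Lemma~\ref{lem:NilSES}. For the outer summand $H^j_{\fm_R}(R) \# S$, Lemma~\ref{lem:nilSegre} reduces nilpotence to emptiness of $\nilsup H^j_{\fm_R}(R) \cap \nilsup S$. By Remark~\ref{rmk:basenilpindex}(2), $\nilsup H^j_{\fm_R}(R) \subseteq (-\infty, 0]$, and the hypothesis $j < b(R)$ removes $0$ from this set (since $b_j(R) \neq 0$, so in fact $b_j(R) < 0$), forcing $\nilsup H^j_{\fm_R}(R) \subseteq (-\infty, -1]$. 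Since $\nilsup S \subseteq \NN$, the intersection is empty. The outer summand $R \# H^j_{\fm_S}(S)$ is handled symmetrically using $j < b(S)$. For each interior summand $H^r_{\fm_R}(R) \# H^s_{\fm_S}(S)$ with $r + s = j+1$, the bound $j < f = \fdp R + \fdp S - 1$ forces $r + s < \fdp R + \fdp S$, so either $r < \fdp R$ or $s < \fdp S$; the corresponding factor is nilpotent and hence so is the Segre product.

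For the second assertion, assume $\fdp R = b(R)$ and $\fdp S = b(S)$, which places $0$ in both $\nilsup H^{\fdp R}_{\fm_R}(R)$ and $\nilsup H^{\fdp S}_{\fm_S}(S)$. For the upper bound $\fdp T \leq f$: at $j = f$, the K\"unneth decomposition contains the interior summand $H^{\fdp R}_{\fm_R}(R) \# H^{\fdp S}_{\fm_S}(S)$ (with $r = \fdp R, s = \fdp S$), whose nil-support contains $0$ by Lemma~\ref{lem:nilSegre}, hence is non-nilpotent. The matching lower bound $\fdp T \geq f$ needs to upgrade the first assertion under the hypothesis; I expect the main obstacle to be the intermediate range $\fdp R \leq j < f$, where the outer summands $H^j_{\fm_R}(R) \# S$ must be shown nilpotent despite $H^j_{\fm_R}(R)$ potentially carrying non-trivial nil-support, an argument that will turn on a careful bookkeeping of which degrees of $\nilsup H^j_{\fm_R}(R)$ survive intersection with $\nilsup S$ in this range.
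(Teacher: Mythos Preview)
Your approach via the K\"unneth decomposition and the nil-support intersection formula (Lemma~\ref{lem:nilSegre}) is exactly the paper's approach, and your arguments for the lower bound $\fdp T \ge \min\{b(R),b(S),f\}$ and for the upper bound $\fdp T \le f$ under the hypothesis (via the non-nilpotent degree-$0$ piece of $H^{f_R}_{\fm_R}(R)\# H^{f_S}_{\fm_S}(S)$) track the paper's proof line by line.

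Your worry about the matching lower bound $\fdp T \ge f$ under the hypothesis is well placed: the paper's proof does not supply this step either. It only exhibits a non-nilpotent summand at cohomological degree $f$, giving $\fdp T \le f$. In fact the paper's own middle paragraph shows that whenever $b(R)<\infty$ the summand $H^{b(R)}_{\fm_R}(R)\# S$ is non-nilpotent in degree $0$, forcing $\fdp T \le b(R)$. Under the hypothesis $b(R)=f_R$ this already gives $\fdp T \le f_R$, and combined with the first-part lower bound one obtains $\fdp T = \min\{f_R,f_S\}$ rather than $f=f_R+f_S-1$. The ``careful bookkeeping'' you anticipate for the outer summands in the range $f_R \le j < f$ would in fact reveal that $H^{f_R}_{\fm_R}(R)\# S$ is genuinely non-nilpotent (it contains $[H^{f_R}_{\fm_R}(R)]_0$ in degree $0$), so the obstruction is not removable. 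The equality $\fdp T = f$ as stated does not follow from the argument given; what the proof actually establishes under the hypothesis is $\fdp T = \min\{b(R),b(S),f\}=\min\{f_R,f_S\}$.
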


\begin{proof}
For $j < b(R)$, each $H^j_{\fm_R}(R)$ is nilpotent in degree $0$. As $S$ is only supported in non-negative degree and  $H^j_{\fm_R}(R)$ is nilpotent in non-negative degree, we have by Lemma~\ref{lem:nilSegre} that $H^j_{\fm_R}(R)\# S$ is nilpotent for $j < b(R)$.  However, if $b(R)<\infty$, then, \[ [H^{b(R)}_{\fm_R}(R)\# S]_0 = [H^{b(R)}_{\fm_R}(R)]_0\otimes_k S_0 \cong [H^{b(R)}_{\fm_R}(R)]_0\] is not nilpotent in degree 0 by hypothesis. Consequently $H^{b(R)}_{\fm_T}(T)$ has a non-nilpotent summand. Similarly, if $b(S)<\infty$, then $H^{b(S)}_{\fm_T}(T)$ is not nilpotent. 

For convenience, set $M_j = \bigoplus_{r+s=j+1} H_{\fm_R}^r(R) \# H_{\fm_S}^s(S)$. The first index $j$ for which $M_j$ may contain a non-nilpotent summand is when $j+1=f_R + f_S$ by Lemma~\ref{lem:nilSegre}, i.e. when $j=f$. Thus, for $j<f$, $M_j$ is nilpotent. Consequently, when $j<\min\{f,b(R),b(S)\}$, $H^j_{\fm_T}(T)$ is nilpotent as required. 

When $j=f$, the summand $H^{f_R}_{\fm_R}(R)\# H^{f_S}_{\fm_S}(S)$, which is nilpotent if and only if $H^{f_R}_{\fm_R}(R)$ and $H^{f_S}_{\fm_S}(S)$ have a non-nilpotent degree in common. One way to guarantee this is the stated condition, that $b(R)=f_R$ and $b(S)=f_S$. 
\end{proof}

The conditions given above are only sufficient since computing even one of the nil-supports involved is hard outside of the given cases, see Remark~\ref{rmk:infinite nilsup is fucking annoying}. However, at maximal $F$-depth the conditions are not too restrictive.

\begin{cor}\label{cor:wFnSegre}
If $R$ and $S$ are weakly $F$-nilpotent, then $T$ is weakly $F$-nilpotent if and only if $b(R)=b(S)=\infty$, in which case $b(T)=\infty$ as well.
\end{cor}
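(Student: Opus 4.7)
The plan is to leverage Theorem~\ref{thm:fdepthSegre} for the forward direction and extract the converse from the analysis already performed in its proof, then establish $b(T)=\infty$ via the top-degree K\"unneth summand together with Lemma~\ref{lem:nilSegre}. Since $R$ and $S$ are weakly $F$-nilpotent, we have $\fdp R = d_R$ and $\fdp S = d_S$, so the quantity $f=\fdp R + \fdp S - 1$ appearing in Theorem~\ref{thm:fdepthSegre} equals $d_T$. Observe also that weak $F$-nilpotence forces $b_j(R)=-\infty$ for $j<d_R$ (and the same for $S$), so $b(R)\in\{d_R,\infty\}$, with $b(R)=\infty$ precisely when $[H^{d_R}_{\fm_R}(R)]_0$ is nilpotent; likewise for $S$.

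For the implication $(\Leftarrow)$, assume $b(R)=b(S)=\infty$. Then Theorem~\ref{thm:fdepthSegre} gives $\fdp T \ge \min\{b(R),b(S),f\}=d_T$, so $T$ is weakly $F$-nilpotent. For the contrapositive of $(\Rightarrow)$, suppose without loss of generality that $b(R)<\infty$, hence $b(R)=d_R$, and note $b(R)=d_R<d_T$ since $d_S\ge 2$. The proof of Theorem~\ref{thm:fdepthSegre} already shows that the K\"unneth summand $H^{b(R)}_{\fm_R}(R)\#S$ of $H^{b(R)}_{\fm_T}(T)$ satisfies
\[
[H^{b(R)}_{\fm_R}(R)\#S]_0 \;=\; [H^{b(R)}_{\fm_R}(R)]_0 \otimes_k [S]_0 \;\cong\; [H^{b(R)}_{\fm_R}(R)]_0,
\]
which is not nilpotent by definition of $b(R)$; hence $H^{b(R)}_{\fm_T}(T)$ is not nilpotent and $\fdp T \le b(R)<d_T$, contradicting weak $F$-nilpotence of $T$.

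Finally, assuming $b(R)=b(S)=\infty$, it remains to verify $b(T)=\infty$. Since $T$ is weakly $F$-nilpotent by the first part, $b_j(T)=-\infty$ for all $j<d_T$, so only $b_{d_T}(T)$ is in question. Applying the K\"unneth formula at $j=d_T$, both $H^{d_T}_{\fm_R}(R)$ and $H^{d_T}_{\fm_S}(S)$ vanish (since $d_R, d_S < d_T$), and among pairs $(r,s)$ with $r+s=d_T+1$, $0\le r\le d_R$, $0\le s\le d_S$ only $(r,s)=(d_R,d_S)$ survives, so
\[
H^{d_T}_{\fm_T}(T) \;\cong\; H^{d_R}_{\fm_R}(R)\#H^{d_S}_{\fm_S}(S).
\]
By Lemma~\ref{lem:nilSegre}, $\nilsup H^{d_T}_{\fm_T}(T)=\nilsup H^{d_R}_{\fm_R}(R)\cap \nilsup H^{d_S}_{\fm_S}(S)$, and since $b(R)=\infty$ yields $0\notin \nilsup H^{d_R}_{\fm_R}(R)$, the intersection omits $0$. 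Thus $b_{d_T}(T)\ne 0$, so $b(T)=\infty$.

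The argument is essentially bookkeeping once the machinery of Theorem~\ref{thm:fdepthSegre} and Lemma~\ref{lem:nilSegre} is in place; the only mild subtlety is the observation that $d_S\ge 2$ (guaranteed by the running depth assumption) is needed to ensure $b(R)\le d_R<d_T$, which is precisely what forces the contradiction in the converse.
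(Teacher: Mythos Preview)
Your proof is correct and is essentially the argument the paper has in mind; the corollary is stated without proof immediately after Theorem~\ref{thm:fdepthSegre}, and you have simply spelled out the details using that theorem (and its proof) together with Lemma~\ref{lem:nilSegre} and the K\"unneth formula. The observation that $d_S\ge 2$ is needed to ensure $b(R)=d_R<d_T$ is exactly the role played by the running depth assumptions on $R$ and $S$.
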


Interestingly, for generalized $F$-depth one of the bounds for the ``expected" formula for Segre products holds without additional hypotheses, unlike depth and $F$-depth. 

\begin{thm}\label{thm:gfdepthSegre}
Write $g_U = \gfdp U$ for $U \in \{R,S,T\}$.  We have the lower bound $g_T \ge g_R + g_S -1$, with equality holding if and only if \[\nilsup H^{g_R}_\fm(R)\cap \nilsup H^{g_S}_\fn(S) \not\subset \{0\}.\]In particular, if $R$ and $S$ are generalized weakly $F$-nilpotent, so is $T$. 
\end{thm}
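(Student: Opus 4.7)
The plan is to apply the graded K\"unneth formula—available thanks to the depth hypotheses $\depth R, \depth S \geq 2$ that annihilate $H^0$ and $H^1$—to decompose $H^j_{\fm_T}(T)$ into three families of summands: the ``extremal'' ones $H^j_\fm(R) \# S$ and $R \# H^j_\fn(S)$, together with the cross terms $H^r_\fm(R) \# H^s_\fn(S)$ with $r+s = j+1$. The strategy is then to compute the nil-support of each summand via Lemma~\ref{lem:nilSegre} and to check when it is contained in $\{0\}$.

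The pivotal observation is already recorded in Remark~\ref{rmk:basenilpindex}(2): $\nilsup H^j_\fm(R) \subset \ZZ_{\leq 0}$ and similarly for $S$, since these are artinian modules and by Lemma~\ref{lem:trichotomy} a positive nil-degree would be forced into an unbounded orbit under multiplication by $p$. Since $\nilsup R, \nilsup S \subset \NN$ trivially, Lemma~\ref{lem:nilSegre} immediately forces both $\nilsup(H^j_\fm(R) \# S)$ and $\nilsup(R \# H^j_\fn(S))$ to lie inside $\ZZ_{\leq 0} \cap \NN = \{0\}$, so the extremal summands are generalized nilpotent at every $j$ with no cohomological condition. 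This is precisely the structural simplification that makes the generalized $F$-depth bound cleaner than the $F$-depth bound in Theorem~\ref{thm:fdepthSegre}, where the obstructions $b(R)$ and $b(S)$ were needed.

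For the cross terms with $r+s = j+1 < g_R + g_S$, one of $r < g_R$ or $s < g_S$ must hold, so the corresponding factor is generalized nilpotent and Lemma~\ref{lem:nilSegre} propagates this to the Segre product. Thus every K\"unneth summand contributing to $H^j_{\fm_T}(T)$ is generalized nilpotent for $j < g_R + g_S - 1$, which yields the lower bound $g_T \geq g_R + g_S - 1$.

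For the equality clause, I would pass to level $j = g_R + g_S - 1$: the extremal summands remain generalized nilpotent by the argument above, and among cross terms with $r+s = g_R + g_S$ every pair $(r,s) \neq (g_R, g_S)$ has $r < g_R$ or $s < g_S$ and is therefore generalized nilpotent. The only possible obstruction is $H^{g_R}_\fm(R) \# H^{g_S}_\fn(S)$, which by Lemma~\ref{lem:nilSegre} fails to be generalized nilpotent precisely when $\nilsup H^{g_R}_\fm(R) \cap \nilsup H^{g_S}_\fn(S) \not\subset \{0\}$, giving the stated equivalence. The ``in particular'' consequence is then immediate: $g_R = d_R$ and $g_S = d_S$ give $g_T \geq d_R + d_S - 1 = d_T$, matched by the trivial upper bound $g_T \leq d_T$. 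I do not anticipate any real obstacle; once the nil-support-in-nonpositive-degrees observation is isolated, the whole theorem reduces to a clean bookkeeping with the K\"unneth decomposition and Lemma~\ref{lem:nilSegre}.
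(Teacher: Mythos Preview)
Your proposal is correct and follows essentially the same route as the paper: decompose $H^j_{\fm_T}(T)$ via the K\"unneth formula, dispose of the extremal summands $H^j_\fm(R)\# S$ and $R\# H^j_\fn(S)$ as generalized nilpotent for all $j$, and then use Lemma~\ref{lem:nilSegre} on the cross terms to locate the first index where generalized nilpotence can fail. The only cosmetic difference is in how the extremal summands are handled: the paper observes directly that $\degsup(H^j_\fm(R)\# S)=\degsup H^j_\fm(R)\cap \degsup S$ is \emph{finite} (so the module itself has finite length), whereas you argue via $\nilsup(H^j_\fm(R)\# S)\subset \ZZ_{\le 0}\cap \NN=\{0\}$ and then invoke Lemma~\ref{lem:graded nilp and gen nilp}; both reach the same conclusion.
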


\begin{proof}
Adopt the notation for $M_j$ as used in the proof of Theorem~\ref{thm:fdepthSegre}. The summands $H^j_{\fm_R}(R) \# S$ and $R \# H^j_{\fm_S}(S)$ are generalized nilpotent for all $j$ since their degree supports are finite. By utilizing Lemma~\ref{lem:nilSegre}, we can see that the first index which might contribute a summand which is not generalized nilpotent is when $j+1=g_R+g_S$, which shows the desired inequality. 

If the degree support condition is satisfied, we can see that the intersection is infinite by Lemma~\ref{lem:graded nilp and gen nilp}. Thus, $H^{g_T}_{\fm_T}(T)/0^F_{H^{g_T}_{\fm_T}(T)}\not\subset\{0\}$ since it is an infinite set. So, $H^{g_T}_{\fm_T}(T)$ is not generalized nilpotent.
\end{proof}
\subsection{Examples}
We use the formulas for $F$-depth and generalized $F$-depth to study Segre products of Fermat hypersurfaces and the projective line.	

\begin{xmp}\label{xmp:Kunneth}
Let $R = k[x,y,z]/(x^4+y^4-z^4)$ and $S=k[u,v]$. Both are Cohen-Macaulay rings of dimension $2$, with $a(R)=0$ and $a(S)=-2$. Set $T=R\# S$. Below is a decomposition of the local cohomology for $T$ using the K\"unneth formula. 
\[\begin{array}{ccc}
H^0_{\fm_T}(T) &=& 0 \\
H^1_{\fm_T}(T) &=& 0 \\ 
H^2_{\fm_T}(T) &=& H^2_{\fm_R}(R) \# S \\
H^3_{\fm_T}(T) &=& H^2_{\fm_R}(R) \# H^2_{\fm_S}(S)
\end{array}\]

Since $a(R)=0$, $H^2_{\fm_T}(T)\neq 0$ and thus $\depth(T)=2$, so $T$ is not Cohen-Macaulay as $\dim T=3$, however $T$ is generalized Cohen-Macaulay hence generalized weakly $F$-nilpotent. One may also use Theorem~\ref{thm:gfdepthSegre} to see that $T$ is generalized weakly $F$-nilpotent.

When $p \equiv 1 \bmod 4$, then $b_2(R) = 0$ so $H^2_{\fm_T}(T)$ is not nilpotent and consequently $\fdp T=2$. However, when $p \equiv 3 \bmod 4$, then $H^2_{\fm_T}(T)$ is nilpotent so $\fdp T=3$ and $T$ is weakly $F$-nilpotent.
\end{xmp}

By using Lemma~\ref{lem:FermatHyp}, a similar argument as in the previous example gives the following result.

\begin{thm}\label{thm:FermatSegre}
Let $p>d$ and $n \geq 2$, and let $R = k[x_0,\ldots,x_n]/(x_0^d + \cdots + x_{n-1}^d - x_n^d)$, $S = k[u,v]$, and $T=R\# S$. If $p \equiv 1 \bmod d$, then $b_2(R) = 0$ and $\fdp T = 2$. When $p \equiv -1 \bmod d$, then $H^2_{\fm_T}(T)$ is nilpotent so $\fdp T=3$ and $T$ is weakly $F$-nilpotent.
\end{thm}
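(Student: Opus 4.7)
The plan is to decompose $H^j_{\fm_T}(T)$ via the Künneth formula and then analyze each summand separately using the nil-support machinery of Section~\ref{sec:segre}. Since $R$ and $S$ are both Cohen-Macaulay of dimension $2$ (so of depth $\ge 2$), Künneth applies and gives $H^0_{\fm_T}(T)=H^1_{\fm_T}(T)=0$, $H^2_{\fm_T}(T)=H^2_{\fm_R}(R)\#S \oplus R\#H^2_{\fm_S}(S)$, and $H^3_{\fm_T}(T)=H^2_{\fm_R}(R)\#H^2_{\fm_S}(S)$. Weak $F$-nilpotence of $T$ (of dimension $3$) reduces to showing that $H^2_{\fm_T}(T)$ is nilpotent.

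The summand $R \# H^2_{\fm_S}(S)$ is identically zero: $\degsup R = \NN$, while $a(S)=-2$ forces $\degsup H^2_{\fm_S}(S)\subset\{n : n\le -2\}$, so these degree supports are complementary. Thus only the summand $H^2_{\fm_R}(R)\#S$ must be addressed, and by Lemma~\ref{lem:nilSegre} it is nilpotent if and only if $\nilsup H^2_{\fm_R}(R)\cap \nilsup S=\varnothing$. Since $S=k[u,v]$ is a polynomial ring, Frobenius is injective on $S$, so $\nilsup S=\degsup S=\NN$; the task is therefore to show $H^2_{\fm_R}(R)$ is nilpotent in every nonnegative degree.

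The degree support of $H^2_{\fm_R}(R)$ is contained in $(-\infty, a(R)] = (-\infty, d-3]$, so only the degrees $0,1,\dots,d-3$ are in question. For any $k\ge 1$, the Frobenius $F\colon [H^2_{\fm_R}(R)]_k \to [H^2_{\fm_R}(R)]_{kp}$ lands in degree $kp$; since $p>d$ and $k\ge 1$ give $kp>d>d-3\ge a(R)$, the target vanishes, so $F$ kills $[H^2_{\fm_R}(R)]_k$ after a single application. Thus $H^2_{\fm_R}(R)$ is automatically nilpotent in all positive degrees whenever $p>d$. The only remaining degree is $0$, and here Lemma~\ref{lem:FermatHyp} applies directly under the hypothesis $p\equiv -1\pmod d$, showing $F$ is zero on $[H^2_{\fm_R}(R)]_0$. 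Combining, $\nilsup H^2_{\fm_R}(R)\cap\NN=\varnothing$, so $H^2_{\fm_R}(R)\#S$ is nilpotent. Hence $H^2_{\fm_T}(T)$ is nilpotent and $T$ is weakly $F$-nilpotent, forcing $\fdp T=3$.

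For the companion statement in the $p\equiv 1\pmod d$ case, the same positive-degree argument (using only $p>d$) shows $H^2_{\fm_R}(R)$ is nilpotent in all degrees $\ge 1$, so $b_2(R)\le 0$. The point is then to show $[H^2_{\fm_R}(R)]_0$ is not nilpotent, which I would do by an explicit Čech computation parallel to Lemma~\ref{lem:FermatHyp}: write a class $\eta_{\ui}=[x_n^{d-i_n}/(x_0^{i_0}\cdots x_{n-1}^{i_{n-1}})]$ in degree $0$ and observe that when $p\equiv 1\pmod d$, one has $(d-i_n)p-i_n = d\cdot r$ for integer $r$ with no cancellation forcing the sum over partitions of $r$ to vanish; in particular $\eta_{(1,\dots,1)}$ has $F(\eta_{(1,\dots,1)})\ne 0$, witnessing non-nilpotence. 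Together with $b_j(R)<0$ for $j\neq 2$ this gives $b_2(R)=0$. Then the summand $H^2_{\fm_R}(R)\#S$ of $H^2_{\fm_T}(T)$ has $[H^2_{\fm_R}(R)]_0\otimes_k k \cong [H^2_{\fm_R}(R)]_0$ in degree $0$, which is non-nilpotent, so $H^2_{\fm_T}(T)$ is non-nilpotent and $\fdp T=2$. The most delicate part of the argument is this last Čech calculation; the rest is a bookkeeping exercise in degree supports once Lemma~\ref{lem:FermatHyp} and Lemma~\ref{lem:nilSegre} are in hand.
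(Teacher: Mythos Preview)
Your approach matches the paper's exactly: the paper simply points to Example~\ref{xmp:Kunneth} and Lemma~\ref{lem:FermatHyp}, and you have carried out that K\"unneth decomposition and degree-support analysis in detail. One remark: you assert $\dim R = 2$, which holds only when $n = 2$; for general $n \ge 2$ one has $\dim R = n$ and $\dim T = n+1$, and your argument goes through verbatim with $H^2_{\fm_R}(R)$, $H^2_{\fm_T}(T)$, $H^3_{\fm_T}(T)$ replaced by $H^n_{\fm_R}(R)$, $H^n_{\fm_T}(T)$, $H^{n+1}_{\fm_T}(T)$ (indeed the theorem's literal conclusions $\fdp T \in \{2,3\}$ are only correct for $n = 2$, so this is as much a quirk of the statement as of your proof).
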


\subsection{Frobenius test exponents for Segre products}

To apply the finiteness theorems to Segre products which have nilpotent singularity types, we first explicate the relationship between the Hartshorne-Speiser-Lyubeznik numbers of $M\# N$ in terms of those for $M$ and $N$.

\begin{lem}\label{lem:HSL bounds for Segre Products}
If $M$ is nilpotent then $\hsl M\# N \le \hsl M$ (and similarly if $N$ is nilpotent). Consequently, if $M$ and $N$ are nilpotent, $\hsl M\# N\le\min\{\hsl M,\hsl N\}$.

\end{lem}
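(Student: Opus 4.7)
The plan is to reduce everything to a direct verification at the level of simple tensors. First I would invoke Lemma~\ref{lem:nilSegre}, which says $\nilsup M\# N = \nilsup M \cap \nilsup N$. If $M$ is nilpotent then $\nilsup M = \varnothing$, so $\nilsup M\#N = \varnothing$, meaning $M\#N$ is itself nilpotent. In particular $0^\rho_{M\#N} = M\#N$, so computing $\hsl M\#N$ amounts to finding the smallest $e$ such that $\rho^e = (\rho_M\#\rho_N)^e$ vanishes identically on $M\#N$.

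Next, set $e_0 = \hsl M$. Because $M$ is nilpotent, $0^{\rho_M}_M = M$, and the definition of $\hsl M$ gives $\rho_M^{e_0}(m) = 0$ for every $m \in M$. In each degree $t$, the graded piece $[M\#N]_t = [M]_t \otimes_k [N]_t$ is spanned over $k$ by simple tensors $m \# n$ with $m \in [M]_t$ and $n \in [N]_t$, and on such an element the diagonal Frobenius acts by
\[
\rho^{e_0}(m\#n) \;=\; \rho_M^{e_0}(m)\#\rho_N^{e_0}(n) \;=\; 0\#\rho_N^{e_0}(n) \;=\; 0.
\]
By additivity this extends to all elements of $[M\#N]_t$, and since $t$ was arbitrary, $\rho^{e_0}$ vanishes on $M\#N$. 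Hence $\hsl M\#N \le e_0 = \hsl M$. Swapping the roles of $M$ and $N$ yields the analogous bound when $N$ is nilpotent, and combining the two inequalities when both are nilpotent gives $\hsl M\#N \le \min\{\hsl M, \hsl N\}$.

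Honestly, there is no real obstacle here: the main point is simply that simple tensors in the Segre product are well-behaved under the diagonal Frobenius, so a killing power for one factor kills all of $M\#N$. The only subtlety to be careful about is invoking Lemma~\ref{lem:nilSegre} before computing $\hsl$, so that we are legitimately testing $\rho^{e_0}$ on $0^\rho_{M\#N}$ (which equals all of $M\#N$) rather than on some possibly proper submodule.
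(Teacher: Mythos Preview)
Your proof is correct and matches the paper's approach: both observe that the diagonal action satisfies $\rho^{e_0} = \rho_M^{e_0}\#\rho_N^{e_0}$, which vanishes on all of $M\#N$ once $\rho_M^{e_0}=0$. Your appeal to Lemma~\ref{lem:nilSegre} is harmless but unnecessary, since showing $\rho^{e_0}$ kills all of $M\#N$ already forces it to kill the submodule $0^\rho_{M\#N}$ regardless of whether that submodule is proper; the paper simply disposes of the vacuous case $\hsl M=\infty$ first and then argues exactly as you do.
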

\begin{proof}
The upper bounds are vacuous if the right hand side is infinite, so we assume $\hsl M$ and $\hsl N$ are finite. If $\hsl M=e<\infty$, we have $\rho^e: M \rightarrow M$ is zero, which implies $\rho^e: M \# N \rightarrow M\# N$ is also zero, as the action is diagonal.  This argument is clearly symmetric in $M$ and $N$.

When both $M$ and $N$ are nilpotent, if either $\hsl M$ or $\hsl N$ are finite, we can apply the previous case to obtain $\hsl M\# N\le \min \{ \hsl M,\hsl N\}$.
\end{proof}


We now apply this lemma to the summands in the K\"unneth formula.  For rotational convenience, set $$\fexp_j(R)= \min\{e \in \NN \mid p^e > a_j(R)\}.$$ This number is an obvious upper bound for the required iterate of $F$ to uniformly annihilate the non-negative degree part $[H^j_{\fm_R}(R)]_{>0} \subset H^j_{\fm_R}(R)$ and depends only on the graded structure of the ring.

\begin{thm}\label{thm:HSL bounds for Kunneth formula}
If $R$ and $S$ are weakly $F$-nilpotent, then  \[ \hsl H^j_{\fm_T}(T) \le \max\left\lbrace 
\begin{array}{l}
\hsl [H^j_{\fm_R}(R)]_0,\hsl [H^j_{\fm_S}(S)]_0,\fexp_j R,\fexp_j S,\\ \\
\max\{\hsl H^r_{\fm_R}(R),\hsl H^s_{\fm_S}(S) \mid r+s=j+1 \}
\end{array}
\right\rbrace.
\]
\end{thm}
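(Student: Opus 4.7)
My plan is to apply the K\"unneth decomposition recalled in Section \ref{sec:segre} and use the fact that HSL numbers behave as a maximum across finite direct sums of $R[F]$-modules (the split case of Lemma \ref{lem:lift of nilp is nilp hsl bounds}). Since $\depth R, \depth S \ge 2$, we have the split decomposition of $T[F]$-modules
\[
H_{\fm_T}^j(T) \;\cong\; H_{\fm_R}^j(R) \# S \;\oplus\; R \# H_{\fm_S}^j(S) \;\oplus\; \bigoplus_{r+s=j+1} H_{\fm_R}^r(R) \# H_{\fm_S}^s(S),
\]
so it suffices to bound the HSL number of each summand, and then take the maximum.

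For the two ``edge'' summands, I would analyze $H_{\fm_R}^j(R) \# S$ (the other is symmetric). Since $S$ is standard graded, this summand is supported only in non-negative degrees, and $[H_{\fm_R}^j(R) \# S]_0 = [H_{\fm_R}^j(R)]_0$ carries the natural Frobenius action inherited from $R$. The decomposition into its degree-$0$ piece and its strictly positive graded piece is not a decomposition of $T[F]$-modules, but, via the forgetful functor of Remark \ref{rmk:forgetful fctr}, it is a direct sum decomposition at the level of abelian groups with Frobenius endomorphism, which is all that is needed to detect the HSL number. On the degree-$0$ piece the HSL number is exactly $\hsl [H_{\fm_R}^j(R)]_0$. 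On the strictly positive piece, an element in degree $n \ge 1$ is mapped by $F^e$ (the diagonal Frobenius) into graded degree $np^e$; as soon as $p^e > a_j(R)$ this lands in a zero graded component of $H_{\fm_R}^j(R)$, so the entire positive-degree piece is annihilated by $F^{\fexp_j(R)}$. Hence $\hsl(H_{\fm_R}^j(R) \# S) \le \max\{\hsl [H_{\fm_R}^j(R)]_0, \fexp_j(R)\}$, and the symmetric bound holds for $R \# H_{\fm_S}^j(S)$.

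For each cross summand $H_{\fm_R}^r(R) \# H_{\fm_S}^s(S)$ with $r+s = j+1$, weak $F$-nilpotence of $R$ and $S$ ensures that at least one of the two factors is nilpotent (since at least one of $r,s$ is strictly below the corresponding dimension in the cases of interest). Lemma \ref{lem:HSL bounds for Segre Products} then applies and produces the bound $\hsl(H_{\fm_R}^r(R) \# H_{\fm_S}^s(S)) \le \max\{\hsl H_{\fm_R}^r(R), \hsl H_{\fm_S}^s(S)\}$. Combining the three types of bounds with the max-of-HSLs principle gives the stated inequality. The only real subtlety is the justification that decomposing an edge summand ``by degree'' into its zero part and its positive part is legitimate for the HSL computation even though $T$ acts across degrees; this is handled cleanly by passing to the category of abelian groups with a fixed endomorphism via Remark \ref{rmk:forgetful fctr}.
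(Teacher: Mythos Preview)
Your approach matches the paper's: apply the K\"unneth decomposition, use the split case of Lemma~\ref{lem:lift of nilp is nilp hsl bounds} to take the maximum HSL number over summands, and bound each summand separately. The paper phrases the edge-summand analysis via Corollary~\ref{cor:finite support hsl bound} and Lemma~\ref{lem:lift of nilp is nilp hsl bounds} rather than the forgetful-functor remark, and for the cross summands simply writes ``by a similar analysis of each term'' where you invoke Lemma~\ref{lem:HSL bounds for Segre Products} explicitly; these are the same argument packaged slightly differently.
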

\begin{proof}
We analyze each summand of the Kunneth formula above. We then combine several maxima. 

First, since $S$ is supported in degree $\ge 0$, we have $H^j_{\fm_R}(R)\# S$ is also supported in degree $\ge 0$. If $a_j(R)\le 0$, then $\fexp_j(R)=0$ which we may remove from the max. If $a_j(R)>0$, then $0^F_{H^j_{\fm_R}(R)\# S} = 0^F_{[H^j_{\fm_R}(R)]_0} \oplus Q$ where $Q$ is only possibly supported in degrees $n$ with $0 < n \le a_j(R)$. Consequently, $F^{\fexp_j}(Q)=0$. We then apply Corollary~\ref{cor:finite support hsl bound} and Lemma~\ref{lem:lift of nilp is nilp hsl bounds}. The complementary summand $R\# H^j_{\fm_S}(S)$ is handled identically. The final summand follows by a similar analysis of each term. 

\end{proof}

\begin{rmk}
Note that as we successively compute the data required bound $\hsl H^j_{\fm_T}(T)$, we do not actually need to know $\hsl H^j_{\fm_R}(R)$ or $\hsl H^j_{\fm_S}(S)$, only \textit{a priori} smaller numbers $\hsl [H^j_{\fm_R}(R)]_0$ and $\hsl [H^j_{\fm_S}(S)]_0 \le \hsl H^j_{\fm_S}(S)$, as well as $a_j(R)$ and $a_j(S)$.

We also note that above bound is not optimal. We give this form as it is compact. One can improve it by including the data of the Hartshorne-Speiser-Lyubeznik number for $Q$, since $\fexp_j(R)$ is only a coarse upper bound for $\hsl Q$. We may also use $\hsl R$ and $\hsl S$ as very coarse upper bounds to the terms in the maximum above. \end{rmk}

\begin{cor}\label{cor:Fte for Segre Products} If we set $H_j$ the maximum appearing on the right hand side of the estimate in Theorem~\ref{thm:HSL bounds for Kunneth formula}, then when $R$ and $S$ are weakly $F$-nilpotent and $b(R)=b(S)=\infty$, $\fte^* T \leq \sum_{j=0}^{d_T} {\binom{d}{j}} H_j$. In particular, $$\fte^* T \leq 2^{d_T}\max\{ \hsl R,\hsl S\}.$$
\end{cor}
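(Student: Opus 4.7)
The plan is to combine three ingredients already present in the paper: the weak $F$-nilpotence of $T$ under the given hypotheses, the graded analogue of Quy's uniform bound on the Frobenius test exponent for weakly $F$-nilpotent rings, and the explicit HSL estimate in Theorem~\ref{thm:HSL bounds for Kunneth formula}. The corollary should drop out of stitching these together.

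First I would invoke Corollary~\ref{cor:wFnSegre}: since $R$ and $S$ are weakly $F$-nilpotent with $b(R)=b(S)=\infty$, the Segre product $T$ is weakly $F$-nilpotent and $b(T)=\infty$ as well. With $T$ weakly $F$-nilpotent (in the graded sense), Remarks~\ref{rmk:homogeneous fte works like fte} and \ref{rmk:graded case is similar} tell us that Quy's finiteness theorem (Remark~\ref{rmk:finiteness theorems}, third bullet) carries over to the homogeneous Frobenius test exponent, giving the inequality
\[
\fte^* T \le \sum_{j=0}^{d_T}\binom{d_T}{j}\hsl H^j_{\fm_T}(T).
\]
Substituting the bound $\hsl H^j_{\fm_T}(T)\le H_j$ from Theorem~\ref{thm:HSL bounds for Kunneth formula} on the right-hand side immediately produces the first asserted estimate.

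For the ``in particular'' refinement, I would apply the coarse comparison noted in the Remark immediately following Theorem~\ref{thm:HSL bounds for Kunneth formula}: each quantity inside the maximum defining $H_j$ is dominated by $\max\{\hsl R,\hsl S\}$. Indeed, $\hsl [H^j_{\fm_R}(R)]_0\le \hsl H^j_{\fm_R}(R)\le \hsl R$, the quantities $\hsl H^r_{\fm_R}(R),\hsl H^s_{\fm_S}(S)$ are bounded analogously, and the degree-theoretic exponent $\fexp_j R$ can be absorbed into $\hsl R$ using the coarse comparison the authors point out. Summing then yields $\fte^* T\le \max\{\hsl R,\hsl S\}\sum_{j=0}^{d_T}\binom{d_T}{j}=2^{d_T}\max\{\hsl R,\hsl S\}$.

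The only subtle point in the execution is verifying that Theorem~\ref{thm:HSL bounds for Kunneth formula} applies at the top index $j=d_T$, where the K\"unneth formula contributes only the cross term $H^{d_R}_{\fm_R}(R)\# H^{d_S}_{\fm_S}(S)$ and neither factor is nilpotent. I expect this to be the main obstacle to verifying, but it is handled by observing that the Segre product of two $F$-modules whose Frobenius actions become injective on the respective quotients by $0^F$ remains injective after Segre tensor (since Segre tensoring is exact in each factor over a field), so $0^F_{M\# N}=0^F_M \# N + M\# 0^F_N$ and hence $\hsl(M\# N)\le\max\{\hsl M,\hsl N\}$. This matches the $(r,s)=(d_R,d_S)$ summand appearing in $H_{d_T}$ and completes the proof.
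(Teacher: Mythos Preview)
Your main line of argument---invoke Corollary~\ref{cor:wFnSegre} to see that $T$ is weakly $F$-nilpotent, apply the graded version of Quy's bound (Remarks~\ref{rmk:finiteness theorems} and \ref{rmk:graded case is similar}), and substitute the estimate from Theorem~\ref{thm:HSL bounds for Kunneth formula}---is exactly the intended deduction; the paper states the corollary without proof because it is this direct combination.

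Your extra verification at $j=d_T$ is well motivated, since Lemma~\ref{lem:HSL bounds for Segre Products} only bounds $\hsl(M\# N)$ when one factor is nilpotent, and at $(r,s)=(d_R,d_S)$ neither top local cohomology module is. However, your proposed fix has a gap. You argue that the diagonal Frobenius on $(M/0^F_M)\#(N/0^F_N)$ is injective ``since Segre tensoring is exact in each factor over a field,'' but exactness of $\#$ concerns $k$-linear maps, whereas the Frobenius is only $p$-linear. Over an imperfect $F$-finite field this fails: for $k=\FF_p(t)$ the map $f\colon k^2\to k$, $f(a,b)=a^p+b^pt$, is $p$-linear and injective, yet $f\# f\colon k^4\to k$ has $e_1\otimes e_2-e_2\otimes e_1$ in its kernel. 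So the identity $0^F_{M\# N}=0^F_M\# N+M\# 0^F_N$ and the resulting bound $\hsl(M\# N)\le\max\{\hsl M,\hsl N\}$ need a different justification---perhaps assuming $k$ perfect, or arguing more carefully using that under $b(R)=b(S)=\infty$ the nil-supports live in strictly negative degrees where the graded pieces only grow. This is really a subtlety in Theorem~\ref{thm:HSL bounds for Kunneth formula} itself at $j=d_T$, not in the corollary, which merely cites that theorem.

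A smaller point: for the ``in particular'' bound you assert $\fexp_j R$ can be absorbed into $\hsl R$ via the authors' remark, but in fact $\fexp_j R\le\hsl R$ can fail (e.g.\ an $F$-rational hypersurface of high degree has $\hsl R=0$ but $\fexp_{d_R}R>0$). What does work is the direct observation that $[H^j_{\fm_R}(R)]_{>0}\subset 0^F_{H^j_{\fm_R}(R)}$, giving $\hsl(H^j_{\fm_R}(R)\# S)\le\hsl H^j_{\fm_R}(R)\le\hsl R$ without passing through $\fexp_j$ at all.
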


\begin{rmk}\label{rmk:lengths of non-nilpotent quotients for Segre}
Let $j<\gfdp R + \gfdp S - 1\le \gfdp T$. We may use the ideas in the proof of Theorem~\ref{thm:gfdepthSegre} to understand the lengths of $H^j_{\fm_T}(T)$. For notational convenience, we set \[ G^j(U)=\left[H^j_{\fm_U}(U)\right]_0/\left[0^F_{H^j_{\fm_U}(U)}\right]_0\] for $U \in \{R,S,T\}$. By the K\"unneth formula and the fact that length is additive on short exact sequences, we have \[\lambda_T\left(H^j_{\fm_T}(T)/0^F_{H^j_{\fm_T}(T)}\right) = \lambda_R(G^j(R))+\lambda_S(G^j(S))+\sum_{\substack{r+s=\\j+1}} \lambda_T \left( H^r_{\fm_R}(R) \# H^s_{\fm_S}(S)/0^F_{H^r_{\fm_R}(R)\# H^s_{\fm_S}(S)} \right).\] 

The last collection of summands is most easily understood when $R$ and $S$ are generalized Cohen-Macaulay, so that $T$ is also generalized Cohen-Macaulay. In this case, we see \[ \lambda_T \left( H^r_{\fm_R}(R) \# H^s_{\fm_S}(S)/0^F_{H^r_{\fm_R}(R)\# H^s_{\fm_S}(S)} \right) = \lambda_R(G^r(R))\cdot \lambda_S(G^s(S)) = \dim_k(G^r(R))\cdot \dim_k(G^s(S)).\]
\end{rmk}

\begin{cor}\label{cor:fte* for gwfn Segre}
Let $R$ and $S$ be generalized weakly $F$-nilpotent. Let $N$ be minimal so that for all $0 \le j < d_R$ and $0 \le i < d_S$ \[ \fm_R^N H^j_{\fm_R}(R) \subset 0^F_{H^j_{\fm_R}(R)} \hspace{5pt} \text{ and } \hspace{5pt} \fm_S^N H^i_{\fm_S}(S)\subset 0^F_{H^i_{\fm_S}(S)}.\] Further, let $e_1$  be minimal so that $p^{e_1} \ge (N+1)2^{d_T-1}$. We have $\fte^* T \le e_1+\sum_{j=0}^{d_T} \binom{d_T}{j} H_j$, where $H_j$ are the bounds on the Hartshorne-Speiser-Lyubeznik numbers of $T$ given in Theorem~\ref{thm:HSL bounds for Kunneth formula}. 
\end{cor}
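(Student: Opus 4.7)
The plan is to verify the hypotheses of \cite[Thm.~3.6]{Mad19} (recalled in the fourth bullet of Remark~\ref{rmk:finiteness theorems}) for the Segre product $T$ and apply it directly; by Remark~\ref{rmk:graded case is similar}, the graded analogue yields the desired bound on $\fte^* T$. Three inputs are needed: (i) $T$ is generalized weakly $F$-nilpotent, (ii) a uniform integer $N_T$ satisfying $\fm_T^{N_T} H^j_{\fm_T}(T) \subset 0^F_{H^j_{\fm_T}(T)}$ for $0 \le j < d_T$, and (iii) bounds on $\hsl H^j_{\fm_T}(T)$. Input (i) is immediate from Theorem~\ref{thm:gfdepthSegre}, and input (iii) is supplied by Theorem~\ref{thm:HSL bounds for Kunneth formula}, giving the $H_j$. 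So the work is concentrated in step (ii), where I claim $N_T \le N+1$.

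To establish this, I would apply the K\"unneth decomposition of $H^j_{\fm_T}(T)$ (the running assumptions $\depth R, \depth S \geq 2$ allow the simplified form stated before Theorem~\ref{thm:fdepthSegre}) and analyze the action of $\fm_T^{N+1}$ on each summand. The key observation is that any $u \in \fm_T^{N+1}$ admits a presentation $u = \sum r_i \otimes s_i$ with $r_i \in \fm_R^{N+1}$ and $s_i \in \fm_S^{N+1}$, since each generator of $\fm_T$ sits inside $[R]_{\ge 1} \otimes_k [S]_{\ge 1}$ and an $(N+1)$-fold product contributes degree at least $N+1$ on each side. Acting on $\eta \# \xi$ in the summand $H^r_{\fm_R}(R) \# H^s_{\fm_S}(S)$ (or in the edge summands $H^j_{\fm_R}(R) \# S$ and $R \# H^j_{\fm_S}(S)$, where one interprets $S$ or $R$ as the corresponding cohomology in cohomological degree $0$) produces $\sum r_i\eta \# s_i\xi$. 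Whenever $j < d_T = d_R + d_S - 1$ and $r+s = j+1$, we must have $r < d_R$ or $s < d_S$ (and likewise for the edge summands, where the relevant vanishing forces $j < d_R$ or $j < d_S$), so the choice of $N$ places the corresponding component of every term into $0^F_{H^r_{\fm_R}(R)}$ or $0^F_{H^s_{\fm_S}(S)}$. A single power of the diagonal Frobenius action, bounded by the relevant finite HSL number from Theorem~\ref{thm:HSL thm}, then annihilates every term, yielding the required containment $\fm_T^{N+1} H^j_{\fm_T}(T) \subset 0^F_{H^j_{\fm_T}(T)}$.

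Combining (i)--(iii), \cite[Thm.~3.6]{Mad19} applied to $T$ gives $\fte^* T \le e_1 + \sum_{j=0}^{d_T} \binom{d_T}{j} \hsl H^j_{\fm_T}(T)$, where $e_1$ is minimal with $p^{e_1} \ge N_T \cdot 2^{d_T - 1}$; substituting $N_T \le N+1$ and the HSL bounds $H_j$ from Theorem~\ref{thm:HSL bounds for Kunneth formula} yields the stated inequality. The main obstacle I anticipate is careful bookkeeping in the boundary K\"unneth summands where $r = d_R$ or $s = d_S$, since the hypothesis on $N$ only controls the strictly lower local cohomology; these are handled by observing that the complementary index is still strictly less than $d_S$ or $d_R$ respectively, so the diagonal Frobenius action is killed through the controlled side.
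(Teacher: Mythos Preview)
Your overall strategy matches the paper's: verify the hypotheses of \cite[Thm.~3.6]{Mad19} for $T$ by bounding the uniform power $N_T$ with $N+1$, and then plug in the HSL bounds $H_j$. Your treatment of the interior K\"unneth summands $H^r_{\fm_R}(R)\# H^s_{\fm_S}(S)$ is correct, including the boundary case $r=d_R$ or $s=d_S$ where the complementary index is still below top and absorbs the $\fm^N$.

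There is, however, a genuine gap in your handling of the \emph{edge} summands $H^j_{\fm_R}(R)\# S$ (and symmetrically $R\# H^j_{\fm_S}(S)$) at the specific index $j=d_R$. Your parenthetical ``the relevant vanishing forces $j<d_R$ or $j<d_S$'' is not right: $H^{d_R}_{\fm_R}(R)\# S$ is typically nonzero, and since $d_S\ge 2$ we have $d_R<d_T$, so this summand really does contribute to $H^{d_R}_{\fm_T}(T)$. At this index your ``controlled side'' argument fails on both factors: the $R$-side is top local cohomology (not covered by the hypothesis on $N$), and the $S$-side is $S$ itself, for which $s_i\xi\in\fm_S^{N+1}$ lands in $0^F_S=\sqrt{0}_S$ only when $S$ is far from reduced. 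Interpreting $S$ as ``cohomological degree $0$'' does not help, because the hypothesis controls $H^0_{\fm_S}(S)$, not $S$.

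The paper closes this gap with a different mechanism that exploits the grading rather than the hypothesis on $N$: since $\degsup S\subset[0,\infty)$ and $\degsup H^j_{\fm_R}(R)\subset(-\infty,a_j(R)]$, the edge summand $H^j_{\fm_R}(R)\# S$ has finite degree support contained in $[0,a_j(R)]$ for \emph{every} $j$, including $j=d_R$. Multiplying by a single copy of $\fm_T$ pushes everything into strictly positive degree, where iterated Frobenius eventually exceeds $a_j(R)$ and hence vanishes. This is precisely why the bound is $N+1$ rather than $N$: the extra ``$+1$'' is the single power of $\fm_T$ needed to kill these edge terms via degree considerations, independent of the generalized-nilpotence data.
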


\begin{proof}
Note first that $\fm_R [H^j_{\fm_R}(R)]_0$ is nilpotent for all $j$, including when $j=d_R$ as $H^j_{\fm_R}(R)$ vanishes in high degree, and similarly for $S$. Consequently, $\fm_T (H^j_{\fm_R}(R) \# S)$ and $\fm_T(R \# H^j_{\fm_S}(S))$ are nilpotent for all $j$. It may be the case that $T$ is weakly $F$-nilpotent so that the $e_1$ may seem superfluous. The role of the $N+1$ factor in the statement ensures $H^{d_R}_{\fm_R}(R)\# S$ and $R\#H^{d_S}_{\fm_S}(S)$ are sent into the nilpotent part of $H^{d_R}_{\fm_T}(T)$ and $H^{d_S}_{\fm_T}(T)$ respectively. 

Now we need only show $\fm^N_T(H^r_{\fm_R}(R)\# H^s_{\fm_S}(S))$ is nilpotent  for $0 \le j < d_T$ and $r+s=j+1$. However, since $R,S,$ and $T$ are all standard graded, if $x\# y$ is a homogeneous element in $\fm_T^N$, we must have that $x \in \fm_R^N$ and $y\in \fm_S^N$. Consequently, for $r<d_R$ and $s<d_S$, multiplication by $x\# y$ sends $H^r_{\fm_R}(R)\# H^s_{\fm_S}(S)$ into $0^F_{H^j_{\fm_T}(T)}$.
\end{proof}

\section{Veronese Subrings} \label{sec:veronese}

Let $R$ be a graded ring and let $M$ be a $\ZZ$-graded $R$-module. For $n \in \NN$, the \textbf{$n$th Veronese subring of $R$} is the graded subring defined by \[R^{(n)} = \oplus_{t\in\NN} [R]_{nt}.\] Indeed, $R^{(n)}$ is a graded subring of $R$ with $[R^{(n)}]_0=k$ and homogeneous maximal ideal $\fm \cap R^{(n)}=\fm^{(n)} = \oplus_{t \ge 1} [R]_{nt}$. Similarly, the \textbf{$n$th Veronese submodule} $M^{(n)}$ of $M$ is the $\ZZ$-graded $R^{(n)}$-module \[
M^{(n)} = \bigoplus_{t \in \ZZ} [M]_{nt},
\] and in the case $M=R$, $R^{(n)}$ is called the \textbf{$n$th Veronese subring} of $R$. 
For any $n \in \NN$, it is clear that $M^{(n)}$ is a direct summand of $M$ as an $R^{(n)}$-module, in particular, $R^{(n)}$ is a direct summand of $R$ as an $R^{(n)}$-module. Under the inclusion $R^{(n)}\subset R$, $\fm\cap R^{(n)} = \fm^{(n)}$, so we see immediately that $\dim R^{(n)} = \dim R$ for any $n\in \NN$. 
\begin{rmk}\label{rmk:natural versus standard grading}
We can view $M^{(n)}$ with two different gradings. We call the \textbf{natural grading} the one under which $M^{(n)}\subset M$ is homogeneous of degree $0$, and the \textbf{standard grading} after we have regraded, i.e. $[M^{(n)}]_t = [M]_{nt}$. We will typically use the standard grading, but write $\degsup^{\text{nat}}$ when considering the natural grading. Notice $\degsup^{\text{nat}} M^{(n)} = \degsup M \cap n\ZZ$ and $\degsup M^{(n)} = f\left(\degsup^{\text{nat}} M\right) \cap \ZZ$, where $f\colon \ZZ\rightarrow \QQ$ is the function $t\mapsto t/n$. 
\end{rmk}

\begin{run}
We let $R$ be a standard graded ring with homogeneous maximal ideal $\fm$, and fix an $n \in \NN$. For convenience, write $S=R^{(n)}$ with homogeneous maximal ideal $\fn=\fm^{(n)}$.
\end{run}

\subsection{Nilpotent singularity types for Veronese subrings}

\begin{lem}
Suppose $M$ is a graded $R[F]$-module such that $[M]_t$ is a finite dimensional $k$-vector space for all $t \in \ZZ$, and write $N=M^{(n)}$. As an $S$-module, $N$ enjoys a graded $S[F]$-module structure and further, $\nilsup^{\text{nat}} N = \nilsup M \cap n\ZZ$. Thus, if $M$ is (generalized) nilpotent, so is $N$, and $\hsl N\le\hsl M$.
\end{lem}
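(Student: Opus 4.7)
The plan is to verify everything essentially at the level of the underlying graded abelian group, using that $M^{(n)}$ is simply the degree-$n\ZZ$ part of $M$ under the natural grading.

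First I would verify that the restriction $\rho|_N$ is well-defined and yields a graded $S[F]$-structure. Since $\rho$ sends $[M]_t$ into $[M]_{tp}$ and $N = \bigoplus_{t\in n\ZZ}[M]_t$, we have $\rho([N]_t) \subset [M]_{tp} \subset N$ for each $t \in n\ZZ$, because $n \mid t$ forces $n \mid tp$. The $p$-linearity over $S$ follows immediately from $p$-linearity over $R$ together with the inclusion $S \subset R$. Under the natural grading, the condition $\rho([N]_t) \subset [N]_{tp}$ is therefore satisfied.

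Next I would identify the orbit closure. By the definition of the restricted Frobenius action, $0^\rho_N = \{m \in N : \rho^e(m) = 0 \text{ for some } e\} = N \cap 0^\rho_M$, and since $0^\rho_M$ is a graded submodule of $M$, this intersection has natural degree support equal to $\degsup(0^\rho_M)\cap n\ZZ$. Consequently the inclusion $N \hookrightarrow M$ induces an isomorphism of graded $S$-modules
\[ N/0^\rho_N \;\cong\; (M/0^\rho_M)^{(n)}, \]
since the right side is $\bigoplus_t [M]_{nt}/[0^\rho_M]_{nt}$. Taking degree supports and using Remark~\ref{rmk:natural versus standard grading} then gives
\[ \nilsup^{\mathrm{nat}} N = \degsup^{\mathrm{nat}} (N/0^\rho_N) = \degsup(M/0^\rho_M) \cap n\ZZ = \nilsup M \cap n\ZZ, \]
which is the desired identity.

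For the consequences, if $M$ is nilpotent then $\nilsup M = \varnothing$ by Lemma~\ref{lem:trichotomy}, hence $\nilsup^{\mathrm{nat}} N = \varnothing$ and $N$ is nilpotent. If instead $M$ is generalized nilpotent, the same lemma (applicable since each $[M]_t$ is finite-dimensional, so each $[N]_t = [M]_{nt}$ is too) yields $\nilsup M \subset \{0\}$, whence $\nilsup^{\mathrm{nat}} N \subset \{0\}$ and $N$ is generalized nilpotent. Finally, since $0^\rho_N \subset 0^\rho_M$, any $e$ with $\rho^e(0^\rho_M) = 0$ automatically satisfies $\rho^e(0^\rho_N)=0$, giving the bound $\hsl N \le \hsl M$. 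I do not anticipate any real obstacle here; the only subtlety is being careful about which grading is used when invoking the trichotomy lemma, since the standard grading on $N$ and the natural grading differ by a scaling that does not affect whether the nil-support is empty, a singleton $\{0\}$, or infinite.
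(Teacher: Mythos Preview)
Your proposal is correct and follows essentially the same approach as the paper: verify that $N$ is a $\rho$-stable graded $S$-submodule of $M$, deduce the nil-support identity from the degree-by-degree equality $[N]_{nt}=[M]_{nt}$, and obtain the HSL bound from $0^\rho_N\subset 0^\rho_M$. Your route through the quotient identification $N/0^\rho_N\cong (M/0^\rho_M)^{(n)}$ and explicit invocation of the trichotomy lemma is slightly more detailed than the paper's terse degree-by-degree argument, but the substance is identical.
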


\begin{proof}
It suffices to show that $N$ is a $\rho$-stable $S$-submodule of $M$, and in particular, that $\rho(m)\in N$ for all homogeneous $m\in N$. This will also prove the required Hartshorne-Speiser-Lyubeznik bounds. If $m \in [N]_{nt} = [M]_{nt}$, then $\rho(m)\in [M]_{ntp} =  [N]_{tp}$. Further, the restricted action $\rho: N \rightarrow N$ is $p$-linear over $S$ since $\rho:M\rightarrow M$ is $p$-linear over $R$, showing the first claim.

Now, by Remark~\ref{rmk:forgetful fctr} $M$ is nilpotent as an $R[F]$-module if and only if it is nilpotent as an $S[F]$-module. The fact that $[N]_{nt}=[M]_{nt}$ shows that $nt\in\nilsup N$ if and only if $nt\in\nilsup M$. Since $\degsup^{\text{nat}} N\subset n\ZZ$, $\nilsup N= n\ZZ\cap \nilsup M$, as claimed. 
\end{proof}

We now show descent of nilpotent singularity types for Veronese subrings. 

\begin{thm}\label{thm:F-depth and gF-depth for Veronese}
We have $f:=\fdp R \le \fdp S$, and if $b_f(R)=0$, then $\fdp S=f$. Furthermore, $\gfdp R\le\gfdp S$. In particular, if $R$ is (generalized) weakly $F$-nilpotent, so is $S$. 
\end{thm}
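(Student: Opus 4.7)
The approach is to leverage a clean identification of graded $S[F]$-modules
\[
H^j_{\fn}(S) \cong H^j_{\fm}(R)^{(n)}
\]
(where the right-hand side carries the natural grading from Remark~\ref{rmk:natural versus standard grading}) and then apply the preceding lemma on Veronese submodules verbatim with $M = H^j_{\fm}(R)$.

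To establish the isomorphism, I would first note that since $R$ is standard graded and $S = R^{(n)}$, the module $R$ decomposes as $R = \bigoplus_{i=0}^{n-1} M_i$ over $S$, with $M_i = \bigoplus_{t} [R]_{nt+i}$, and $S = M_0$ is a direct $S$-summand of $R$. Since $\sqrt{\fn R} = \fm$, change of rings gives $H^j_{\fn}(R) = H^j_{\fm}(R)$ as graded $R$-modules, and extracting the $M_0$-summand yields the isomorphism above in natural grading. The Frobenius actions on both sides are induced by the same map $F\colon R \to R$, so compatibility is automatic by functoriality.

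For part~(1): if $j < \fdp R$, then $H^j_{\fm}(R)$ is nilpotent, so by the preceding lemma the Veronese submodule $H^j_{\fn}(S)$ is nilpotent too, giving $\fdp S \geq f$. If moreover $b_f(R) = 0$, then $[H^f_{\fm}(R)]_0$ is not nilpotent; since $0 \in n\ZZ$, we have $[H^f_{\fn}(S)]_0 = [H^f_{\fm}(R)]_0$ in the natural grading, so $H^f_{\fn}(S)$ is not nilpotent and $\fdp S \leq f$. For part~(2), if $j < \gfdp R$, then by Lemma~\ref{lem:trichotomy} the nil-support of $H^j_{\fm}(R)$ is contained in $\{0\}$, and the preceding lemma gives $\nilsup^{\text{nat}} H^j_{\fn}(S) = \nilsup H^j_{\fm}(R) \cap n\ZZ \subseteq \{0\}$; applying Lemma~\ref{lem:trichotomy} again yields $H^j_{\fn}(S)$ is generalized nilpotent, so $\gfdp S \geq \gfdp R$.

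For part~(3), note $\dim S = \dim R = d$. If $R$ is weakly $F$-nilpotent then $\fdp R = d$, and (1) combined with the upper bound $\fdp S \leq d$ from Lemma~\ref{lem:Fdepthbasic} forces $\fdp S = d$; the generalized case is identical via (2). The only real obstacle is pinning down the isomorphism $H^j_{\fn}(S) \cong H^j_{\fm}(R)^{(n)}$ as graded $S[F]$-modules; once that is in hand, the proof reduces entirely to book-keeping via the Veronese lemma and Lemma~\ref{lem:trichotomy}.
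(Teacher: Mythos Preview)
Your proposal is correct and follows essentially the same approach as the paper: both rest on the identification $H^j_{\fn}(S)\cong H^j_{\fm}(R)^{(n)}$ as graded $S[F]$-modules (the paper records this degree by degree via a commutative square, while you state it globally and justify it via the direct-summand splitting and change of rings), and both then read off nilpotence and generalized nilpotence from the corresponding facts for $H^j_{\fm}(R)$. Your version is marginally cleaner in that you invoke the preceding Veronese-submodule lemma directly rather than reproving its content through the diagram, and you make the nil-support bookkeeping via Lemma~\ref{lem:trichotomy} explicit; the paper's argument is the same in substance.
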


\begin{proof}
For any $t \in \ZZ$ we have the following commutative diagram of $k$-vector spaces where the horizontal maps are isomorphisms and the vertical maps the restriction of the canonical Frobenius action on local cohomology to a single degree. \begin{center}
\begin{tikzcd}
\left[H^j_{\fn}(S)\right]_t \arrow{r}{\sim} \arrow{d}{F} & \left[H^j_{\fm_R}(R)\right]_{nt} \arrow{d}{F} \\
\left[H^j_{\fn}(S)\right]_{pt} \arrow{r}{\sim}  & \left[H^j_{\fm_R}(R)\right]_{pnt} 
\end{tikzcd}
\end{center} Consequently, if $H^j_{\fm_R}(R)$ is nilpotent, there is an $e \in \NN$ such that $F^e:H^j_{\fm_R}(R) \rightarrow H^j_{\fm_R}(R)$ vanishes, and so $F^e:[H^j_{\fm_R}(R)]_{nt}\rightarrow [H^j_{\fm_R}(R)]_{p^ent}$ vanishes. This implies $H^j_{\fn}(S)$ is nilpotent. Thus, for $j<f$, $H^j_{\fn}(S)$ is nilpotent, showing $\fdp S\ge f$. Furthermore, as $[H^j_\fn(S)]_0 = [H^j_{\fm_R}(R)]_0$, if $b_f(R) = 0$ then $b_f(S)=0$ as well, implying $\fdp S\le f$, so $\fdp S=f$.

By Lemma~\ref{lem:graded nilp and gen nilp}, we have $H^j_{\fm_R}(R)$ and $H^j_\fn(S)$ are generalized nilpotent if and only if they are nilpotent in all nonzero degrees, and $b_j(R)=0$ if and only if $b_j(S)=0$ since the degree zero part of the local cohomology modules of $R$ and $S$ agree. We may then apply the argument with the commutative diagram above to see that if $H^j_\fm(R)$ is nilpotent in nonzero degree, the same is true of $H^j_\fn(S)$. Thus, $\gfdp R \le \gfdp S$.
\end{proof}

Now apply our results so far to verify when a graded rings with controlled singularities on the punctured spectrum have high Veronese subrings which are $F$-nilpotent, similar to \cite[Prop. 3.1]{Sin00}. Since we vary the degree of the Veronese subring, we will write $S_n=R^{(n)}$ and $\fm_n=\fm \cap S_n$  in the theorem that follows; notably $S_1=R$. 

\begin{thm}\label{thm:VeronesePuncSpec} If $R$ is weakly $F$-nilpotent and $F$-nilpotent on the punctured spectrum, then the following are equivalent.
\begin{enumerate}[label=(\alph*)]
\item $R$ is $F$-nilpotent
\item $b(R)=\infty$
\item for all $n \in\NN$, $S_n$ is $F$-nilpotent
\item for a single $n \in \NN$, $S_n$ is $F$-nilpotent
\end{enumerate}
\end{thm}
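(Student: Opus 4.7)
The plan is to anchor everything on the equivalence (a) $\iff$ (b) and then propagate it through the graded Veronese identifications used in the proof of Theorem~\ref{thm:F-depth and gF-depth for Veronese}. For (a) $\iff$ (b), the implication (a) $\implies$ (b) is Remark~\ref{rmk:degree0nilpotent}. For (b) $\implies$ (a), weakly $F$-nilpotent collapses $b(R)=\infty$ to the single condition that $[H^d_\fm(R)]_0$ is Frobenius-nilpotent. Meanwhile, $R$ being $F$-nilpotent on the punctured spectrum yields, via \cite[Prop.~3.1, Lem.~5.1]{KMPS} as in the remark following Lemma~\ref{lem:punctured spec}, that $M := 0^*_{H^d_\fm(R)}/0^F_{H^d_\fm(R)}$ has finite length, hence finite degree support. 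Passing to the quotient by $0^F$ kills all Frobenius-nilpotent elements of $0^*$, so $0^\rho_M=0$ and $\nilsup M = \degsup M$. By the trichotomy in Lemma~\ref{lem:trichotomy}, this set is $\varnothing$, $\{0\}$, or infinite; finite length rules out infinite, and $[H^d_\fm(R)]_0 \subset 0^F_{H^d_\fm(R)}$ forces $[M]_0 = 0$, ruling out $\{0\}$. So $M = 0$, i.e., $R$ is $F$-nilpotent.

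Next, (c) $\implies$ (d) is trivial. For (d) $\implies$ (b), Remark~\ref{rmk:degree0nilpotent} applied to $S_n$ gives $b(S_n) = \infty$, and the degree-zero identification $[H^j_{\fm_n}(S_n)]_0 = [H^j_\fm(R)]_0$ extracted from the commutative diagram in the proof of Theorem~\ref{thm:F-depth and gF-depth for Veronese} translates this directly to $b(R) = \infty$. For (b) $\implies$ (c), the established (b) $\implies$ (a) gives $R$ is $F$-nilpotent, so $0^*_{H^d_\fm(R)} = 0^F_{H^d_\fm(R)}$. Fix $n$. By Theorem~\ref{thm:F-depth and gF-depth for Veronese}, $S_n$ is weakly $F$-nilpotent. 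The Veronese identification $H^j_{\fm_n}(S_n) = (H^j_\fm(R))^{(n)}$ is $F$-equivariant, whence
$0^F_{H^d_{\fm_n}(S_n)} = (0^F_{H^d_\fm(R)})^{(n)} = (0^*_{H^d_\fm(R)})^{(n)}.$
Persistence of tight closure along the finite split inclusion $S_n \hookrightarrow R$ would give $0^*_{H^d_{\fm_n}(S_n)} \subset (0^*_{H^d_\fm(R)})^{(n)}$, and combining with the trivial $0^F \subset 0^*$ yields $0^*_{H^d_{\fm_n}(S_n)} = 0^F_{H^d_{\fm_n}(S_n)}$, so $S_n$ is $F$-nilpotent.

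The main obstacle is the persistence claim $0^*_{H^d_{\fm_n}(S_n)} \subset 0^*_{H^d_\fm(R)}$ under the Veronese inclusion: given a witness $c \in S_n^\circ$ for $\xi \in 0^*_{H^d_{\fm_n}(S_n)}$ with $cF^e(\xi) = 0$ in $H^d_{\fm_n}(S_n)$, this same equation holds in $H^d_\fm(R)$ via the natural Veronese embedding, so the real content is verifying $S_n^\circ \subset R^\circ$, i.e., that the contraction of each minimal prime of $R$ along the integral inclusion $S_n \subset R$ is a minimal prime of $S_n$. This is a standard fact for Veronese subrings but should be recorded explicitly. The secondary technical point is the finite-length claim for $M = 0^*_{H^d_\fm(R)}/0^F_{H^d_\fm(R)}$, which under our hypotheses is extracted from the \cite{KMPS} machinery already invoked in the remark following Lemma~\ref{lem:punctured spec}; this is where the hypothesis $R$ is $F$-nilpotent on the punctured spectrum does its essential work.
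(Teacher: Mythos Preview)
Your proof is correct and follows essentially the same route as the paper. The paper packages the equivalences (a)/(c)/(d) by showing $\degsup H(n) = \degsup H$ for all $n$ (where $H(n)=0^*_{H^d_{\fm_n}(S_n)}/0^F_{H^d_{\fm_n}(S_n)}$), combining the degree-zero identification $[H(n)]_0=[H]_0$ with the containment $\degsup H(n)\subset \degsup H$; the latter step is precisely the persistence statement you isolate, which the paper leaves implicit in the phrase ``the graded inclusion $S_n\subset R$ implies\ldots''. Your organization as a cycle of implications, together with your explicit invocation of \cite[Prop.~3.1, Lem.~5.1]{KMPS} for the finite-length claim in (b)$\Rightarrow$(a), is if anything more careful than the paper's, which cites Lemma~\ref{lem:punctured spec} for (a)$\Leftrightarrow$(b) even though the hypothesis here is $F$-nilpotent (not $F$-rational) on the punctured spectrum. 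The one point you flag for verification---that $S_n^\circ \subset R^\circ$---is indeed routine for the finite graded inclusion $S_n\subset R$ and is all that is needed to push the tight-closure witness up to $R$.
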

\begin{proof} 
The equivalence of (a) and (b) is demonstrated in Lemma~\ref{lem:punctured spec}. Let $n \in \NN$ and for convenience write \[H(n)=0^*_{H^d_{\fm_n}(S_n)}/0^F_{H^d_{\fm_n}(S_n)}\] and $H=H(1)$. Since $R$ is $F$-nilpotent on the punctured spectrum, $H$ is finite length, so $\degsup H \subset \{0\}$. Further, the graded inclusion $S_n \subset R$ implies that $\degsup H(n)\subset \degsup H$. But since $H^d_{\fm_n}(S_n)\simeq H^d_\fm(R)^{(n)}$ for all $n \in \NN$, we have $[H(n)]_0=[H]_0$ for all $n \in \NN$ as $k[F]$-modules. Hence, $\degsup H(n) = \degsup H$ for all $n$. This shows the remainder of the equivalences.
\end{proof}

\subsection{Frobenius test exponents for Veronese subrings}

We can now bound the homogeneous Frobenius test exponent for a Veronese subring of a graded ring with a nilpotent singularity. 

\begin{thm}\label{thm:Fte for Veronese}
One has $\hsl H^j_{\fm_{S}}(R') \le \hsl H^j_{\fm_R}(R)$. Consequently, if $R$ is weakly $F$-nilpotent, then $\fte^* S \le \sum_{j=0}^d \binom{d}{j} \hsl H^j_{\fm_R}(R)$. Furthermore, if $R$ is generalized weakly $F$-nilpotent and $N$ is the smallest $m \in \NN$ such that $\fm_R^m H^j_{\fm_R}(R)$ is nilpotent for all $0 \le j <d$ and $e_1$ is the smallest $e \in \NN$ such that $p^e \ge N 2^{d-1}$, then $\fte^* S \le e_1+\sum_{j=0}^d \binom{d}{j} \hsl H^j_{\fm_R}(R)$.
\end{thm}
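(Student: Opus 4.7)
The plan is to reduce both estimates to the parameter test exponent bounds of Quy and the first author applied to $S$. The necessary ingredients are (i) the bound on HSL numbers stated in the theorem, (ii) the fact, already proved in Theorem~\ref{thm:F-depth and gF-depth for Veronese}, that $S$ inherits (generalized) weakly $F$-nilpotent singularities from $R$, and (iii) a comparison between the generalized nilpotent index for $S$ and its counterpart $N$ for $R$.

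For (i), the commutative diagram appearing in the proof of Theorem~\ref{thm:F-depth and gF-depth for Veronese} realizes the Frobenius on $[H^j_{\fn}(S)]_t$ as the restriction of the Frobenius on $[H^j_{\fm_R}(R)]_{nt}$; equivalently, under the natural grading, $H^j_{\fn}(S) = H^j_{\fm_R}(R)^{(n)}$ as graded $S[F]$-modules. The lemma preceding Theorem~\ref{thm:F-depth and gF-depth for Veronese} then yields $\hsl H^j_{\fn}(S) \le \hsl H^j_{\fm_R}(R)$. For the weakly $F$-nilpotent case, combining this with the graded version of Quy's bound (Remarks~\ref{rmk:homogeneous fte works like fte} and \ref{rmk:graded case is similar}) applied to $S$ produces the stated inequality.

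For (iii), let $N_S$ denote the smallest $m\in\NN$ such that $\fn^m H^j_{\fn}(S)\subset 0^F_{H^j_{\fn}(S)}$ for all $0\le j<d$. Observe that $\fn\subset \fm_R$ as abelian subgroups of $R$, so $\fn^N\subset \fm_R^N$. For $x\in \fn^N$ and $\eta\in H^j_{\fn}(S)\subset H^j_{\fm_R}(R)$, the product $x\eta$ lies in $\fm_R^N H^j_{\fm_R}(R)\subset 0^F_{H^j_{\fm_R}(R)}$ by hypothesis. Since the Frobenius on $H^j_{\fn}(S)$ is the restriction of that on $H^j_{\fm_R}(R)$ and Frobenius closure is detected at the abelian-group-with-endomorphism level (Remark~\ref{rmk:forgetful fctr}), we conclude $x\eta\in 0^F_{H^j_{\fn}(S)}$. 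Hence $N_S\le N$, so the analogous exponent $e_1^S$ for $S$ satisfies $e_1^S\le e_1$. Applying the first author's bound from Remark~\ref{rmk:finiteness theorems} to $S$ and invoking (i) then gives the generalized case.

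The main obstacle is verifying the identifications in step (i) and the precise compatibility of Frobenius closures in step (iii)---one must be careful that ``nilpotent modulo $F$'' is an intrinsic condition on the underlying $(M,F)$-pair so that it passes unchanged between $R$-module and $S$-module structures on the same graded piece. Once these are in hand, both bounds are immediate from the uniform finiteness theorems already recorded in Remark~\ref{rmk:finiteness theorems}.
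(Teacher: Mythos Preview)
Your proposal is correct and follows essentially the same route as the paper's proof: both use the commutative diagram from Theorem~\ref{thm:F-depth and gF-depth for Veronese} to obtain the HSL bound, then apply the Quy and Maddox test-exponent bounds to $S$, and for the generalized case both observe that $\fn^N\subset\fm_R^N$ so that multiplication by $\fn^N$ lands $H^j_{\fn}(S)=H^j_{\fm_R}(R)^{(n)}$ in the nilpotent part. Your write-up is a bit more explicit about why nilpotence passes between the $R[F]$- and $S[F]$-module structures (via Remark~\ref{rmk:forgetful fctr}), but this is exactly the content of the paper's phrase ``$xH^j_{\fm_R}(R)$ is nilpotent in degree $nt$ for all $t\in\ZZ$.''
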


\begin{proof}
We recall the diagram from the proof of Theorem~\ref{thm:F-depth and gF-depth for Veronese}, from which the first claim follows immediately, \begin{center}
\begin{tikzcd}
\left[ H^j_{\fm_{R^{(n)}}}(R^{(n)})\right]_t \arrow{r}{\sim} \arrow{d}{F} & \left[H^j_{\fm_R}(R)\right]_{nt} \arrow{d}{F} \\
\left[H^j_{\fm_{R^{(n)}}}(R^{(n)})\right]_{pt} \arrow{r}{\sim}  & \left[H^j_{\fm_R}(R)\right]_{pnt} 
\end{tikzcd}
\end{center} 

Now, given an $x \in \fm_{S}^N$, viewed in $R$ we also have $x \in \fm_R^N$. Since $x H^j_{\fm_R}(R)$ is nilpotent, $xH^j_{\fm_R}(R)$ is nilpotent in degree $nt$ for all $t \in \ZZ$, completing the claim.
\end{proof}

\section{Diagonal subalgebras of bigraded hypersurfaces} 

In Sections~\ref{sec:segre} and \ref{sec:veronese}, we developed the necessary tools to understand nilpotence for Segre products and Veronese submodules. Veronese subalgebras of multigraded hypersurface rings provide interesting examples of $\NN$-graded rings with distinguished $F$-singularities which were studied in \cite{KSSW09}. Their local cohomology admits a K\"unneth-type decomposition into Segre products and Veroneses, see \cite[Lem. 2.1]{KSSW09}, so our tools will apply to understanding when they have nilpotent singularity types. We review the necessary details of their construction below.

Let $T$ be an $\NN^2$-graded ring in the sense of \cite{GW78b}, where $[T]_{(0,0)}=k$ is an algebraically closed field and $T$ is a domain. Given a $\ZZ^2$-graded $T$-module $M$, we have corresponding notions of degree shifts and Veronese submodules as outlined below.

\begin{dff}
Let $M$ be a $\ZZ^2$-graded $T$ module, and fix $(d,e)\in \ZZ^2$ and $\Delta=(g,h)\in\NN^2$. \begin{itemize}
\item The \textbf{graded shift} $M(d,e)$ of $M$ is a new graded $T$-module structure on $M$ by $[M(d,e)]_{(a,b)}=[M]_{a+d,b+e}$. 
\item The \textbf{Veronese subring} of $T$ defined by $\Delta$ is the ring \[T_\Delta=
\bigoplus_{t \in \NN} [T]_{(gt,ht)} 
\] which can be viewed naturally as an $\NN^2$-graded subring of $T$ for which the graded inclusion $T_\Delta\subset T$ is split as $T_\Delta$-modules, or alternatively as an $\NN$-graded ring by $[T_\Delta]_t = [T]_{(gt,ht)}$. 
\item Finally, the \textbf{Veronese submodule} of $M$ defined by $\Delta$ is given by \[
[M_\Delta]_t = \bigoplus_{t \in \ZZ} [M]_{(gt,ht)}
\] which is also naturally viewed a $\ZZ^2$-graded direct summand of $M$ as a $T_\Delta$-module or a $\ZZ$-graded $T_\Delta$-module.
\end{itemize}
\end{dff} 

\begin{rmk}\label{rmk:diagonal functor}It is convenient to view $\bullet_\Delta$ as a functor which sends objects to their ``diagonal" defined by $\Delta$ and the maps to their restrictions to this diagonal. This makes sense either as a functor from the category of $\ZZ^2$-graded abelian groups to the category of $\ZZ$-graded abelian groups or the category of $\ZZ^2$-graded $T$-modules to $\ZZ$-graded $T_\Delta$-modules. From this, it is clear that if $M$ has a $\ZZ^2$-graded $T[F]$-module structure\footnote{We do not spell this idea out carefully here, but the extensions to the bigraded case are routine.} $(M,\rho)$, $M_\Delta$ has a $\ZZ$-graded $T_\Delta[F]$-module structure $(M_\Delta,\rho_\Delta)$. Further, if $(M,\rho)$ is nilpotent, so is $(M_\Delta,\rho_\Delta)$. By using the $\NN$-grading on $T_\Delta$, we can apply all of the general theory developed earlier in the paper for graded $T_\Delta[F]$-modules.\end{rmk}

When $T$ is a tensor product of polynomial rings $A$ and $B$ of dimensions $m$ and $n$ respectively, the K\"unneth formula gives simple criteria for when $(T/f)_\Delta$ is Gorenstein or Cohen-Macaulay. In particular, \cite[Thm.~3.1]{KSSW09} identifies that $(T/f)_\Delta$ is Cohen-Macaulay if and only if $\lfloor \frac{d-m}{g} \rfloor < \frac{e}{h}$ and $\lfloor \frac{e - n}{h} \rfloor < \frac{d}{g}$. 

\begin{run}
Let $(R,\fm)$ and $(S,\fn)$ be standard graded normal domains of dimension $d_R$ and $d_S$ respectively, where $[R]_0=[S]_0=k$ is an algebraically closed field. We write $f_R=\fdp R$ and $f_S=\fdp S$. 

Further, we assume $R$ and $S$ are finite type over $k$, so that $R$ is generated as a $k$-algebra in degree $1$ by some elements $x_1,\cdots,x_r$ and $S$ is generated as a $k$-algebra in degree $1$ by elements $y_1,\cdots,y_s$. Define an $\NN^2$-grading on $T=R\otimes_k S$, notably a normal domain, by setting $\deg(x_i\otimes 1)=(1,0)$ and $\deg(1\otimes y_i)=(0,1)$, and extend the grading to the whole ring.

\

Finally, we fix a nonzero $f \in T$ of degree $\deg(f)=(d,e)>(0,0)$ and suppose $\Delta=(g,h)\in \NN^2$. We write $\fM$ for the homogeneous maximal ideal of $T_\Delta$.
\end{run}

\begin{rmk}\label{rmk:diag description}
In this setting, we can see $T_\Delta =R^{(g)}\# S^{(h)}$, where we have regraded $R^{(g)}$ and $S^{(h)}$ to be standard graded. On can see $T_\Delta$ as a simultaneous generalization of the earlier constructions in the following sense for particular choices of $\Delta$.
\begin{itemize}
\item $\Delta=(1,0)$: Since $S^{(0)}$ is simply $k$ in degree 0 and $R^{(1)}=R$, we have $$T_\Delta = k\otimes_k k \oplus \bigoplus_{t\ge 1} [R]_t = \oplus_{t\in\NN} [R]_t = R.$$ In this case, we see $\dim T_\Delta = d_R$.
\item $\Delta_1=(g,0)$ or $\Delta_2=(0,h)$: Similarly, we have $T_{\Delta_1} = R^{(g)}$ with $\dim T_{\Delta_1} = d_R$, and by symmetry, $T_{\Delta_2} = S^{(h)}$ with $\dim T_{\Delta_2} = d_S$.
\item $\Delta =(1,1)$: We recover the Segre product, $T_\Delta = R\# S$, and $\dim T_\Delta = d_R + d_S - 1$
\end{itemize}
We can now see that when $g\ge 1$ and $h \ge 1$, $\dim T_\Delta = \dim R^{(g)} + \dim S^{(h)} - 1=  d_R + d_S - 1$. From now on, we will assume $(g,h)>(0,0)$. 
\end{rmk}

We are now ready to apply our theorems on Segre products and Veronese subrings to study the (generalized) $F$-depth of $T_\Delta$. Many of the conditions are only sufficient because $\nilsup H^{f_R}_\fm(R)$ and $\nilsup H^{f_S}_\fn(S)$ are both difficult to understand in general  

\begin{thm}\label{thm:f-depth of diagonal subalgebras}
For convenience, set $f_T=f_R+f_S-1$. We have the following,
\begin{enumerate}[label=(\alph*)]
\item $\fdp T_\Delta \ge f_T$,
\item $H^{f_T}_{\fM}(T_\Delta)$ is generalized nilpotent if either $H^{f_R}_\fm(R)$ or $H^{f_S}_\fn(S)$ is generalized nilpotent,
\item $\fdp T_\Delta = f_T$ if $b(R)=f_R$ and $b(S)=f_S$,
\item $\fdp T_\Delta > f_T$ if $b_{f_T}(R)\neq 0$, $b_{f_T}(S)\neq 0$, and either $H^{f_R}_\fm(R)$ is generalized nilpotent and $b_{f_S}(S)\neq 0$ or $H^{f_S}_\fn(S)$ is generalized nilpotent and $b_{f_R}(R)\neq 0$.
\end{enumerate}
\end{thm}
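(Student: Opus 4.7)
My plan is to recognize $T_\Delta = R^{(g)} \# S^{(h)}$ via Remark~\ref{rmk:diag description} and apply the machinery for Segre products (Theorem~\ref{thm:fdepthSegre}) to the Veronese subrings $R^{(g)}$ and $S^{(h)}$. Two preliminary observations streamline all four parts: first, Theorem~\ref{thm:F-depth and gF-depth for Veronese} yields $\fdp R^{(g)} \geq f_R$ and $\fdp S^{(h)} \geq f_S$; second, since $[H^j_{\fm^{(g)}}(R^{(g)})]_0 = [H^j_{\fm_R}(R)]_0$ as $k[F]$-modules (as used in the proof of Theorem~\ref{thm:F-depth and gF-depth for Veronese}), we have $b_j(R^{(g)}) = b_j(R)$ for every $j$, and similarly for $S$. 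In particular, generalized nilpotence of $H^{f_R}_{\fm_R}(R)$ descends to generalized nilpotence of $H^{f_R}_{\fm^{(g)}}(R^{(g)})$ since the Veronese operation sends the nil-support $\{0\}$ to itself.

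For part (a), I would apply Theorem~\ref{thm:fdepthSegre} directly to $R^{(g)}$ and $S^{(h)}$: the third term of the resulting minimum is $\fdp R^{(g)} + \fdp S^{(h)} - 1 \geq f_T$, while the first two become $b(R)$ and $b(S)$ by the preservation observation. For part (b), the K\"unneth formula decomposes $H^{f_T}_{\fM}(T_\Delta)$ into pure and cross summands; the pure summands and off-diagonal cross summands are nilpotent or generalized nilpotent for trivial reasons (nil-supports meeting $\NN$ only at $0$, and nilpotent factors respectively), and the only potential obstruction is the diagonal cross term $H^{f_R}(R^{(g)}) \# H^{f_S}(S^{(h)})$; by Lemma~\ref{lem:nilSegre}, its nil-support is contained in the intersection of the nil-supports of the two factors, and generalized nilpotence of either factor forces that intersection into $\{0\}$. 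For part (c), the lower bound $\fdp T_\Delta \geq f_T$ is part (a); for the matching upper bound, use that $b(R) = f_R$ and $b(S) = f_S$ place $0$ into both $\nilsup H^{f_R}(R^{(g)})$ and $\nilsup H^{f_S}(S^{(h)})$, so the diagonal cross term contributes a non-nilpotent piece in degree zero.

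For part (d), I would verify that every K\"unneth summand contributing to $H^{f_T}_{\fM}(T_\Delta)$ is nilpotent. The pure summands $H^{f_T}(R^{(g)}) \# S^{(h)}$ and $R^{(g)} \# H^{f_T}(S^{(h)})$ are nilpotent from the hypotheses $b_{f_T}(R) \neq 0$ and $b_{f_T}(S) \neq 0$, since their nil-supports meet $\NN$ trivially by Lemma~\ref{lem:nilSegre}. The off-diagonal cross terms are nilpotent by the Veronese $F$-depth bound, and the diagonal cross term is nilpotent under the asymmetric hypothesis: if $H^{f_R}(R)$ is generalized nilpotent and $b_{f_S}(S) \neq 0$, then $\nilsup H^{f_R}(R^{(g)}) \subset \{0\}$ while $0 \notin \nilsup H^{f_S}(S^{(h)})$, so Lemma~\ref{lem:nilSegre} forces an empty intersection; the symmetric case is identical. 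The main obstacle throughout is the bookkeeping required to push nil-supports through both the regrading of Veronese subrings and the intersection formula for Segre products; the pivotal technical step is verifying the $k[F]$-module identification $[H^j_{\fm^{(g)}}(R^{(g)})]_0 = [H^j_{\fm_R}(R)]_0$, since everything else follows by assembling the resulting bounds from Lemma~\ref{lem:nilSegre}.
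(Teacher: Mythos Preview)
Your approach is essentially identical to the paper's: both recognize $T_\Delta = R^{(g)} \# S^{(h)}$ via Remark~\ref{rmk:diag description}, invoke Theorems~\ref{thm:fdepthSegre} and~\ref{thm:F-depth and gF-depth for Veronese} for the inequality in (a), and then analyze the K\"unneth decomposition of $H^{f_T}_{\fM}(T_\Delta)$ summand by summand via Lemma~\ref{lem:nilSegre} for (b)--(d). Your explicit observation that $b_j(R^{(g)}) = b_j(R)$ (and likewise for $S$), together with the descent of generalized nilpotence to Veronese submodules, is precisely the mechanism the paper uses when passing between the Veronese subrings and the original rings in its table of summands.
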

\begin{proof}
Since $T_\Delta = R^{(g)}\# S^{(h)}$, we can use Theorems~\ref{thm:fdepthSegre} and \ref{thm:F-depth and gF-depth for Veronese} to see: \[
\fdp T_\Delta \ge \fdp R^{(g)}+\fdp S^{(h)} - 1 \ge f_R + f_S-1=f_T
\]  To see the remainder of the claims, we use the K\"unneth formula for $T_\Delta=R^{(g)}\# S^{(h)}$ and analyze each summand.\[\renewcommand{\arraystretch}{1.25}
\begin{array}{l|l}
H^{f_T}_\fm\left(R^{(g)}\right) \# S^{(h)} & \text{gen. nilpotent, nilpotent if and only if } b_{f_T}(R)\neq 0 \\
R^{(g)}\# H^{f_T}_\fn\left(S^{(h)}\right) & \text{gen. nilpotent, nilpotent if and only if } b_{f_T}(S)\neq 0 \\
H^i_\fm\left(R^{(g)}\right)\# H^j_\fn\left(S^{(h)}\right) \text{ for } (i,j)<(f_R,f_S) & \text{nilpotent} \\
H^{f_R}_\fm\left(R^{(g)}\right) \# H^{f_S}_\fn\left(S^{(h)}\right) & \text{described below} 
\end{array}\]

Now consider $H=H^{f_R}_\fm\left(R^{(g)}\right) \# H^{f_S}_\fn\left(S^{(h)}\right)$. If $H^{f_R}_\fm(R^{(g)})$ is generalized nilpotent but $b_{f_S}(S)<0$, we see that $H$ is nilpotent, with the obvious symmetric statement swapping the roles of $R$ and $S$. If both are generalized nilpotent, then $H$ is is non-nilpotent in degree zero. This completes the remainder of the claims.
\end{proof}

To determine when $(T/fT)_\Delta$ also has nilpotent singularity types, we need to carefully understand the interrelated $T_\Delta[F]$-modules involved.

\begin{lem}\label{lem:diag hyp diagram}
We then have a short exact sequence of graded $T_\Delta[F]$-modules as below. \begin{center}
\begin{tikzcd}
0 \arrow{r} & (T(-d,-e)_\Delta ,(f^{p-1}F)_\Delta) \arrow{r}{(\cdot f)_\Delta} & (T_\Delta ,F) \arrow{r} &  ((T/fT)_\Delta ,F) \arrow{r} & 0 
\end{tikzcd}
\end{center} Writing $(T',\rho)$ for $(T(-d,-e)_\Delta,f^{p-1}F)_\Delta)$, we have the inequalities below. \[\fdp T_\Delta \le \fdp(T',\rho)\text{ and }\gfdp T_\Delta\le\gfdp(T',\rho)\]
\end{lem}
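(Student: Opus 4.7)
My plan is to prove the lemma in two steps: first, construct the short exact sequence, and then deduce the inequalities from it via Theorem~\ref{thm:Fdepthses}.

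For the SES, I would start from the standard short exact sequence of $\ZZ^2$-graded $T$-modules
$$0 \to T(-d,-e) \xrightarrow{\cdot f} T \to T/fT \to 0,$$
where injectivity of $\cdot f$ uses that $T$ is a domain and $f \neq 0$. Applying the exact diagonal functor $(-)_\Delta$ (Remark~\ref{rmk:diagonal functor}) yields an SES of graded $T_\Delta$-modules. To upgrade this to an SES of $T_\Delta[F]$-modules, one checks Frobenius compatibility. Note that the canonical $F$ does \emph{not} preserve the shifted grading on $T(-d,-e)$: for $m \in [T(-d,-e)]_{(a,b)} = T_{(a-d, b-e)}$, we have $F(m) = m^p \in T_{(p(a-d), p(b-e))}$, which does not land in $[T(-d,-e)]_{(pa,pb)} = T_{(pa-d, pb-e)}$. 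However, multiplication by $f^{p-1}$, of bidegree $((p-1)d, (p-1)e)$, corrects this shift, so $f^{p-1}F$ is a legitimate graded Frobenius action on $T(-d,-e)$. Moreover, $\cdot f$ intertwines these actions since $F(fm) = f^p m^p = f \cdot (f^{p-1}F)(m)$, and the projection to $T/fT$ manifestly commutes with $F$. Applying the diagonal functor preserves all commutation relations by Remark~\ref{rmk:diagonal functor}, yielding the required SES of graded $T_\Delta[F]$-modules.

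For the $F$-depth inequality, I would apply Theorem~\ref{thm:Fdepthses}(2) to the SES to obtain
$$\fdp(T', \rho) \ge \min\{\fdp T_\Delta,\ \fdp(T/fT)_\Delta + 1\}.$$
The desired bound reduces to showing this minimum equals $\fdp T_\Delta$, i.e., $\fdp(T/fT)_\Delta + 1 \ge \fdp T_\Delta$. Since $f$ is a nonzerodivisor in the domain $T$ and the diagonal construction reduces dimension by one when $g,h>0$ (cf.\ Remark~\ref{rmk:diag description}), we have $\dim(T/fT)_\Delta = \dim T_\Delta - 1$. This dimensional drop, combined with the K\"unneth-type decomposition of \cite[Lem.~2.1]{KSSW09} expressing graded local cohomology of $T_\Delta$ and $(T/fT)_\Delta$ in terms of Segre products and Veronese submodules of $H^*_\fm(R)$ and $H^*_\fn(S)$, should let me match the non-nilpotent cohomological indices of $T_\Delta$ and $(T/fT)_\Delta$ with the correct shift.

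The $\gfdp$ inequality follows by the same argument with \emph{nilpotent} replaced by \emph{generalized nilpotent} throughout; the hypothesis of finite Hartshorne-Speiser-Lyubeznik numbers in the generalized half of Theorem~\ref{thm:Fdepthses} is automatic here, since each module appearing is a local cohomology of a finitely generated module and hence artinian. The main obstacle I anticipate is the careful bookkeeping required to establish $\fdp(T/fT)_\Delta + 1 \ge \fdp T_\Delta$ through the K\"unneth decomposition, precisely tracking non-nilpotent indices on both sides rather than relying only on the dimensional upper bounds.
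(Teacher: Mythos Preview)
Your construction of the short exact sequence is correct and matches the paper's argument closely: both set up the bigraded SES $0 \to T(-d,-e) \xrightarrow{\cdot f} T \to T/fT \to 0$, verify that $f^{p-1}F$ is the graded Frobenius action making the left square commute, and then apply the diagonal functor via Remark~\ref{rmk:diagonal functor}.

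The inequality step, however, diverges from the paper and as written is circular. You invoke Theorem~\ref{thm:Fdepthses}(2) to get $\fdp(T',\rho)\ge\min\{\fdp T_\Delta,\,\fdp(T/fT)_\Delta+1\}$ and then need $\fdp(T/fT)_\Delta+1\ge\fdp T_\Delta$. But that inequality is exactly Theorem~\ref{thm:diagonalSubalgebraHypersurface}, whose proof in the paper \emph{uses} the inequality of Lemma~\ref{lem:diag hyp diagram} you are trying to establish. You anticipate this and propose to prove the needed bound independently via the K\"unneth decomposition of \cite[Lem.~2.1]{KSSW09}, but you give no details, and carrying this out would amount to reproving Theorem~\ref{thm:diagonalSubalgebraHypersurface} from scratch; also note your appeal to $\dim(T/fT)_\Delta=\dim T_\Delta-1$ is a hypothesis added only in Theorem~\ref{thm:diagonalSubalgebraHypersurface}, not available here.

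The paper avoids $(T/fT)_\Delta$ entirely for the inequality. It records the iterate formula $\rho^w=f^{p^w-1}F^w$ on $T(-d,-e)$ and observes that this persists after passing to local cohomology and to the diagonal. Because $\rho^w$ factors through $F^w$ followed by multiplication by $f^{p^w-1}$, any vanishing of $F^w$ forces vanishing of $\rho^w$; together with $F_\Delta=F$ this gives $\fdp T_\Delta\le\fdp(T',\rho)$ and $\gfdp T_\Delta\le\gfdp(T',\rho)$ directly. This is the ``missing idea'' in your plan: rather than comparing $T'$ and $T_\Delta$ through the third term of the SES, compare the two Frobenius actions themselves via the factorization $\rho^w=f^{p^w-1}F^w$.
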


\begin{proof}
Notably, $T/fT$ is another $\NN^2$-graded ring with grading given by $\deg(g+fT)=\deg(g)$ for a homogeneous $g+fT\in T/fT$. We have the following commutative diagram of graded $T$-modules with exact rows, whose horizontal maps are degree $(0,0)$ and vertical maps are graded $p$-linear.\begin{center}
\begin{tikzcd}
0 \arrow{r} & T(-d,-e)\arrow{d}{f^{p-1}F} \arrow{r}{\cdot f} & T\arrow{d}{F} \arrow{r} & T/fT\arrow{d}{F} \arrow{r} & 0 \\
0 \arrow{r} & T(-d,-e) \arrow{r}{\cdot f} & T\arrow{r} & T/fT \arrow{r} & 0 
\end{tikzcd}
\end{center} The map $f^{p-1}F:T(-d,-e)\rightarrow T(-d,-e)$ deserves some discussion. \begin{itemize}
\item For any homogeneous $x \in [T(-d,-e)]_{(a,b)}=[T]_{(a-d,b-e)}$, we have $F(x)\in[T]_{(ap-dp,bp-ep)}$, and $\deg(f^{p-1}) = (dp-d,ep-e)$, so $f^{p-1}F(x) \in [T]_{(ap-d,bp-e)} = [T(-d,-e)]_{(ap,bp)}$. Further, for any $z\in T$, $f^{p-1}F(zx)=z^p(f^{p-1}F(x))$. If we write $\rho=f^{p-1}F$, we can see that $(T(-d,-e),\rho)$ is a graded $T[F]$-module. 
\item There is a simple formula for the iterates of $\rho$, namely for any $w$, $\rho^w = f^{p^w-1}F^w$. 
\end{itemize}
The rest follows from Remark~\ref{rmk:diagonal functor}, observing that the canonical Frobenius map on an $\NN^2$-graded ring passes to the canonical Frobenius map on the corresponding $\NN$-graded ring under $\bullet_\Delta$. The fact that $F_\Delta=F$ and the second bullet point above together imply the $F$-depth and generalized $F$-depth inequalities. \end{proof}

We can now explicate some situations in which $(T/fT)_\Delta$ has nilpotent-type singularities. 

\begin{thm}\label{thm:diagonalSubalgebraHypersurface} We have the following inequalities,
\begin{itemize}
\item $\fdp (T/fT)_\Delta \ge \fdp T_\Delta -1$,
\item $\gfdp (T/fT)_\Delta \ge \gfdp T_\Delta -1$.
\end{itemize} Thus, if $\dim (T/fT)_\Delta = \dim T_\Delta - 1$ and $T_\Delta$ is (generalized) weakly $F$-nilpotent, so is $(T/fT)_\Delta$.
\end{thm}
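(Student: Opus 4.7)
The plan is to apply Theorem~\ref{thm:Fdepthses}(3) directly to the short exact sequence of finitely generated graded $T_\Delta[F]$-modules produced by Lemma~\ref{lem:diag hyp diagram},
\[
0 \longrightarrow (T', \rho) \xrightarrow{\;(\cdot f)_\Delta\;} (T_\Delta, F) \longrightarrow ((T/fT)_\Delta, F) \longrightarrow 0,
\]
where $(T', \rho) = (T(-d,-e)_\Delta, (f^{p-1}F)_\Delta)$. Part (3) of Theorem~\ref{thm:Fdepthses} yields
\[
\fdp (T/fT)_\Delta \;\ge\; \min\{\fdp T_\Delta,\; \fdp T' - 1\},
\]
and the analogous statement for $\gfdp$. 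The graded case applies without change by Remark~\ref{rmk:graded case is similar}.

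Next I would feed in the ``free'' inequality already recorded in Lemma~\ref{lem:diag hyp diagram}, namely $\fdp T_\Delta \le \fdp T'$ and $\gfdp T_\Delta \le \gfdp T'$. Combining gives $\fdp T' - 1 \ge \fdp T_\Delta - 1$, hence
\[
\min\{\fdp T_\Delta,\; \fdp T' - 1\} \;\ge\; \fdp T_\Delta - 1,
\]
and the $\gfdp$ version is identical. This establishes the two displayed inequalities of the theorem essentially by bookkeeping; no further computation with the twisted action $\rho = f^{p-1}F$ is required, since Lemma~\ref{lem:diag hyp diagram} has already done the work.

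For the final assertion, suppose $T_\Delta$ is weakly $F$-nilpotent, so $\fdp T_\Delta = \dim T_\Delta$. The inequality above together with the dimension hypothesis gives
\[
\fdp (T/fT)_\Delta \;\ge\; \dim T_\Delta - 1 \;=\; \dim (T/fT)_\Delta,
\]
while Lemma~\ref{lem:Fdepthbasic}(a) (graded version) gives the reverse bound, so equality holds and $(T/fT)_\Delta$ is weakly $F$-nilpotent. The generalized weakly $F$-nilpotent case is the same, using Lemma~\ref{lem:Fdepthbasic}(c), once one observes that $(T/fT)_\Delta$ is equidimensional: $T$ is a finitely generated $k$-algebra domain and $f$ is a nonzero element, so by Krull's principal ideal theorem $T/fT$ is equidimensional of dimension $\dim T - 1$, and passing to the Veronese diagonal preserves this.

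The main (and only) subtlety is the mild appeal to equidimensionality in the generalized case to get $\gfdp (T/fT)_\Delta \le \dim (T/fT)_\Delta$; otherwise the argument is a direct combination of the long exact sequence estimate and the module-level bound already built into Lemma~\ref{lem:diag hyp diagram}.
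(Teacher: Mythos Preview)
Your proposal is correct and follows exactly the approach of the paper: apply the graded version of Theorem~\ref{thm:Fdepthses}(3) to the short exact sequence of Lemma~\ref{lem:diag hyp diagram}, and combine with the inequality $\fdp T_\Delta \le \fdp(T',\rho)$ (and its generalized analogue) already supplied there. Your write-up in fact contains more detail than the paper's own proof, which dispatches the inequalities in a single sentence and leaves the final assertion (including the equidimensionality point needed for the generalized case) entirely implicit.
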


\begin{proof}
We use the graded version of Theorem~\ref{thm:Fdepthses} applied to the short exact sequence in \ref{lem:diag hyp diagram}, from which the inequalities follow. 
\end{proof}

On the other hand, $(T/fT)_\Delta$ may be weakly $F$-nilpotent even if $T_\Delta$ is not. We give an example of a sufficient condition below.

\begin{xmp}
Suppose $\dim (T/fT)_\Delta = \dim T_\Delta - 1$, and $H^{f_R}_\fm(R)$ and $H^{f_S}_\fn(S)$ are generalized nilpotent so $\fdp T_\Delta = f_T=f_R+f_S-1$ by Theorem~\ref{thm:f-depth of diagonal subalgebras}(c) and Lemma~\ref{lem:graded nilp and gen nilp}. If $\fdp T_\Delta < \dim T_\Delta$ and  the following conditions are satisfied,  
\begin{itemize}
\item $\displaystyle \dfrac{e}{h} \ge \left\lceil \dfrac{b_{f_T}(R)+d}{g} \right\rceil$ or $\dfrac{e}{h}\not\in \ZZ$,
\item $\displaystyle \dfrac{d}{g} \ge \left\lceil \dfrac{b_{f_T}(S)+e}{h} \right\rceil$ or $\dfrac{d}{g}\not\in \ZZ$,
\item the integer matrix $\left[ 
\begin{array}{cc}
d & e \\
g & h
\end{array}
\right]$ has nonzero determinant or $\left\lbrace \dfrac{e}{h},\dfrac{d}{g}\right\rbrace\not\subset \ZZ$, 
\end{itemize} then $\fdp (T/fT)_\Delta \ge \fdp T_\Delta$. To see this, by Lemma~\ref{lem:diag hyp diagram}, note suffices to show $\fdp(T',\rho)>f_T$, which we can do by showing that the shifted Frobenius map $F^w:H^{f_T}_\fM(T') \rightarrow H^{f_T}_\fM(T(-dp^w,-ep^w)_\Delta)$ vanishes for $w\gg 0$ by Remark~\ref{rmk:forgetful fctr}. 

Define $\mcl_1(x) = -d+gx$ and $\mcl_2(x) = -e+hx$ viewed as linear functions $\RR\rightarrow \RR$, notably bijective. By \cite[Lemma~2.1]{KSSW09},  a modified version of the K\"unneth formula gives that $H^{f_T}_\fM(T') = A_R \oplus A_S \oplus H \oplus N$, where the summands are given by,
\begin{align*}
[A_R]_t =& \left[H^{f_T}_\fm(R)\right]_{\mcl_1(t)} \# [S]_{\mcl_2(t)}, \\
[A_S]_t =& [R]_{\mcl_1(t)} \# \left[H^{f_T}_\fn(S)\right]_{\mcl_2(t)}, \\
[H]_t =& \left[H^{f_R}_\fm(R)\right]_{\mcl_1(t)} \# \left[H^{f_S}_\fn(S)\right]_{\mcl_2(t)}, \\
[N]_t =& \bigoplus_{\substack{i+j=f_T+1 \\ i<f_R \text{ and } j<f_S}}\left[H^{i}_\fm(R)\right]_{\mcl_1(t)} \# \left[H^j_\fn(S)\right]_{\mcl_2(t)}.
\end{align*}

From here, we have the following description of the nil-supports.
\begin{itemize}
\item One has $t \in \nilsup A_R$ if and only if $t\in \mcl_1^{-1}\{ \nilsup H^{f_T}_\fm(R)\} \cap \mcl_2^{-1}[0,\infty) \cap \ZZ$. We have \[\mcl^{-1}_1\{\nilsup H^{f_T}_\fm(R)\}\subset \left(-\infty,\dfrac{b_{f_T}(R)+d}{g} \right] \text{ and } \mcl_2^{-1}[0,\infty)\subset \left[\dfrac{e}{h},\infty \right)\] with an obvious symmetric statement for $\nilsup A_S$.
\item Further, one has $t \in \nilsup H$ if and only if $t\in \mcl_1^{-1}\{0\}\cap\mcl_2^{-1}\{0\}\cap \ZZ$. We have $\mcl_1^{-1}(0)=d/g$ and $\mcl_2^{-1}(0)=e/h$ so this intersection is nonempty if and only if $e/h=d/g\in\ZZ$.
\item $\nilsup N=\varnothing$ since either $H^i_\fm(R)$ or $H^j_\fn(S)$ are nilpotent for $i<f_R$ and $j<f_S$.
\end{itemize}
From the analysis of the nil-supports above, the conditions are sufficient to ensure $H^{f_T}_\fM(T')$ is nilpotent under $\rho$.
\end{xmp}

For a concrete example, we present the following.

\begin{rmk}
Note that the theorem above and examples generated thereby can be viewed as similar to counter-examples to the deformation of $F$-nilpotent singularities.  Srinivas-Takagi showed that $F$-nilpotence does not deform even in the Gorenstein case, see \cite[Example~2.7]{ST15}. Polstra-Quy also noted that weak $F$-nilpotence does not deform by showing that there are difficulties at the $H^0$ level, see \cite[Section~4]{PQ19}. 
\end{rmk}

\begin{xmp}
Suppose $k$ is an algebraically closed field of characteristic $7$. Let  \[R=k[x_0,x_1,x_2]/(x_0^4+x_1^4-x_2^4)\,\text{ and }\,S=k[y_0,y_1,y_2]/(y_0^3+y_1^3-y_2^3),\] notably both Cohen-Macaulay normal domains of dimension 2. By Lemma~\ref{lem:FermatHyp} we see that $b(R)=\infty$ and by \cite[Lemma~5.27]{Bli01}, $b(S)=2$. We also have $a(R)=1$ and $a(S)=0$.

For all $\Delta'=(g,h) \in \NN^2$ with $g>1$, we have $a\left(R^{(g)}\right)=a\left(S^{(h)}\right) = 0$, so $\dim T_{\Delta'} = 3>\fdp T_{\Delta'} = 2$, so that $T_{\Delta'}$ is not weakly $F$-nilpotent. Thus, if we take $\Delta=(2,2)$ and $f=x_0\otimes y_0 \in T$ homogeneous of multidegree $(1,1)$, then the numerical conditions in the previous theorem are satisfied, and hence $(T/fT)_\Delta$ is weakly $F$-nilpotent. 
\end{xmp}

\end{document}